\tikzstyle{bbullet}=[circle, fill=black, inner sep=0pt, minimum size=3pt]
\tikzstyle{gbullet}=[circle, fill=green, inner sep=0pt, minimum size=3pt]
\tikzstyle{rbullet}=[circle, fill=red, inner sep=0pt, minimum size=3pt]
\tikzstyle{wbullet}=[circle, fill=white, thick, inner sep=0pt, minimum size=3pt]
\tikzstyle{cross}=[circle, draw=black, inner sep=0pt, minimum size=3pt]
\newcommand{\me}{\mathcal{E}}
\newcommand{\ml}{\mathcal{L}}
\newcommand{\mo}{\mathcal{O}}
\newcommand{\mt}{\mathcal{T}}
\newcommand{\mx}{\mathcal{X}}
\newcommand{\CC}{\mathbb{C}}
\newcommand{\GG}{\mathbb{G}}
\newcommand{\PP}{\mathbb{P}}
\newcommand{\QQ}{\mathbb{Q}}
\newcommand{\RR}{\mathbb{R}}
\newcommand{\ZZ}{\mathbb{Z}}
\newcommand{\Aff}{\mathrm{Aff}}
\newcommand{\Aut}{\mathrm{Aut}}
\newcommand{\Bir}{\mathrm{Bir}}
\newcommand{\Bl}{\mathrm{Bl}}
\newcommand{\Diff}{\mathrm{Diff}}
\newcommand{\GL}{\mathrm{GL}}
\newcommand{\id}{\mathrm{id}}
\newcommand{\Ker}{\mathrm{Ker}}
\newcommand{\pr}{\mathrm{pr}}
\newcommand{\Stab}{\mathrm{Stab}}
\newcommand{\topo}{\mathrm{top}}
\theoremstyle{plain}
 \newtheorem{thm}{Theorem}[section]
 \newtheorem{theo}[thm]{Theorem}
\newtheorem{cor}[thm]{Corollary}
\theoremstyle{definition}
\newtheorem{defin}[thm]{Definition}
\newtheorem{example}[thm]{Example}
\newtheorem{lem}[thm]{Lemma}
\newtheorem{principle}[]{Principle}
\newtheorem{question}[thm]{Question}
\newtheorem{rmk}[thm]{Remark}
\newcommand{\sE}{{\mathcal E}}
\newcommand{\sG}{{\mathcal G}}
\newcommand{\sK}{{\mathcal K}}
\newcommand{\sS}{{\mathcal S}}
\newcommand{\sX}{{\mathcal X}}
\newcommand{\C}{{\mathbb C}}
\newcommand{\E}{{\mathbb E}}
\newcommand{\F}{{\mathbb F}}
\newcommand{\hol}{\ensuremath{\mathcal{O}}}
\newcommand\om{\omega}
\newcommand\la{\lambda}
\newcommand\s{\sigma}
\newcommand\al{\alpha}
\newcommand\be{\beta}
\newcommand\Ga{\Gamma}
\newcommand\De{\Delta}
\newcommand\ga{\gamma}
\newcommand\de{\delta}
\DeclareMathOperator{\Pic}{Pic}
\DeclareMathOperator{\Hom}{Hom}
\DeclareMathOperator{\Alb}{Alb}
\newcommand{\FF}{\ensuremath{\mathbb{F}}}
\newcommand{\ra}{\ensuremath{\rightarrow}}
\newcommand\dual{\mathrel{\raise3pt\hbox{$\underline{\mathrm{\thinspace d
\thinspace}}$}}}
\newcommand\qe{\ifhmode\unskip\nobreak\fi\quad $\Box$}       % box for QED
\def\BOX{\hfill\lower.5\baselineskip\hbox{$\Box$}}
\def\C{{\Bbb C}}
\newenvironment{dedication}
        {\begin{quotation}\begin{center}\begin{em}}
        {\par\end{em}\end{center}\end{quotation}}
\begin{document}
\title[Topologically trivial automorphisms of cKM]{On topologically trivial automorphisms of compact K\"ahler manifolds and algebraic  surfaces}
\author{Fabrizio Catanese}
\author{Wenfei Liu}

\address{Lehrstuhl Mathematik VIII, Mathematisches Institut der Universit\"{a}t
Bayreuth, NW II, Universit\"{a}tsstr. 30,
95447 Bayreuth, Germany,  and Korea Institute for Advanced Study, Hoegiro 87, Seoul, 
133-722, Korea}
\email{Fabrizio.Catanese@uni-bayreuth.de}

\address{Xiamen University  \\ School of Mathematical Sciences \\ Siming South Road 422 \\ Xiamen, Fujian 361005 (China)}
\email{wliu@xmu.edu.cn}

\thanks{The first author acknowledges support of the ERC 2013 Advanced Research Grant-340258-TADMICAMT; part of this work was performed at KIAS Seoul.
 The second author acknowledges support of  the NSFC (No.~11971399 and No.~11771294). } 
\keywords{Compact K\"ahler manifolds, algebraic surfaces, Lie groups of automorphisms, cohomologically trivial, topologically trivial automorphisms,
(cohomologically) rigidified, Enriques--Kodaira classification, surfaces isogenous to a product, rational and ruled surfaces,
minimal and not minimal surfaces. }
\subjclass[2010]{14J50, 32Q15, 32Q05, 32Q55, 32M05, 32G15, 32J17, 14L30, 14J25, 14J26, 14J27.}
\begin{abstract}
In this paper, we investigate automorphisms of compact K\"ahler manifolds with different levels of topological triviality. In particular, we provide several examples of smooth complex projective surfaces $X$ whose  groups of $C^\infty$-isotopically trivial automorphisms, resp.~
cohomologically trivial  automorphisms, have a number of connected components  which can be arbitrarily large.
\end{abstract}
\maketitle
\begin{dedication}
Dedicated to the memory of the `red' Bishop of Italian Mathematics, Edoardo Vesentini  (1928--2020).
\end{dedication}

\tableofcontents

\section{Introduction}
 Let $X$ be a compact connected complex manifold. Bochner and Montgomery \cite{bm1, bm2} showed that the automorphism group $\Aut(X)$
(the group of biholomorphic maps
$ g\colon X \ra X$, i.e., the group of  diffeomorphisms  
$ g \in \Diff (X)$
which preserve the complex structure of $X$) 
is a finite dimensional complex Lie Group, possibly with infinitely many connected components, whose Lie Algebra is the space
$H^0(X,  \Theta_X)$ of holomorphic vector fields on $X$.

 Denote by $\Aut_0(X)$ the identity component of $\Aut(X)$ and define the group of  
 $C^\infty$-isotopically
 trivial automorphisms as:
$$\Aut_*(X) : = \{\sigma\in\Aut(X) \mid \sigma \in \Diff_0(X)\},$$
where $\Diff_0(X)$ denotes the identity component of the  group of diffeomorphisms.

 In other words, $\Aut_*(X)$ consists of the automorphisms that are $C^\infty$-isotopic to the identity.
  This group plays an important role (\cite{handbook}) in the construction of the Teichm\"uller space of $X$,
 and   Meersseman recently constructed the Teichm\"uller stack $\mt(X)$ of complex structures on the underlying differentiable manifold of $X$ (\cite{Me19}). 
 
 The holonomy 
of $\mt(X)$ turns out to be the quotient group   $\Ga_{*}(X) : = \Aut_{*}(X)/\Aut_0(X)$,
 a subgroup of the group of (connected) components $\Ga(X) : = \Aut(X)/\Aut_0(X)$.

 The group of (connected) components $\Ga(X) : = \Aut(X) / \Aut_0(X)$ is at most countable, and here is an easy example where
 it is infinite:

\begin{example}
Let $E$ be an elliptic curve, and let $X = E^n$ with $n\geq 2$. 

Then $\Aut_0(X) = E^n$, while the group 
 $\Ga(X)$ contains $ \GL(n, \ZZ)$, acting in the obvious way:
 $$ g \in  \GL(n, \ZZ) , x = (x_1, \dots, x_n) \mapsto gx= (\sum_j g_{1j} x_j, \dots , \sum_j g_{nj} x_j).$$
\end{example}

Since  the condition for two automorphisms to be isotopic is not directly tractable by algebro-geometrical methods, the strategy is to first consider the action of $\Aut(X)$ on the cohomology groups $H^*(X;R)$, where $R$ is a coefficient ring.  We denote by
\[
\Aut_R(X):=\{\sigma\in\Aut(X) \mid \sigma \text{ induces the trivial action on }H^*(X;R)\}
\]
In practice, we choose $R=\ZZ, \QQ, \RR$ or $\CC$. One more equivalence relation among  automorphisms is the homotopy equivalence, so we define the group of   
 homotopically
trivial automorphisms as:
\[
\Aut_\sharp(X) = \{\sigma\in\Aut(X) \mid \sigma \text{ is homotopic to }\id_X\},
\]
It is clear that 
\begin{multline*}
   \Aut_0(X) \vartriangleleft \Aut_*(X) \vartriangleleft \Aut_\sharp(X)\vartriangleleft \Aut_\ZZ(X)\\
   \vartriangleleft \Aut_\QQ(X) = \Aut_\RR(X) =\Aut_\CC(X) \vartriangleleft \Aut(X)
\end{multline*}
 so that it suffices to consider the smaller ladder
$$
   \Aut_0(X) \vartriangleleft \Aut_*(X) \vartriangleleft \Aut_\sharp(X)\vartriangleleft \Aut_\ZZ(X)
   \vartriangleleft \Aut_\QQ(X)  \vartriangleleft \Aut(X).
$$

The case where $X$ is a cKM = compact K\"ahler Manifold, with a K\"ahler metric $\om$,  was
considered around 1978 by Lieberman \cite{Li78} and Fujiki \cite{fujiki}, in particular Lieberman \cite{Li78} proved: 
\begin{theo}[Lieberman]\label{thm: Lieberman}
$\Aut_0(X)$ is a finite index  sugbroup of the group of automorphisms preserving the cohomology class of the K\"ahler form, 
$$\Aut_{\om}(X) = \{\sigma\in\Aut(X) \mid \sigma^* [\om] = [\om]\}.$$

In particular, the quotient group  $$ \Ga_{\QQ}(X) : = \Aut_\QQ(X) / \Aut_0(X) $$ is a  finite group.

\end{theo}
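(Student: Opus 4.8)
The plan is to represent each automorphism $\sigma$ by its graph $\Gamma_\sigma \subset X \times X$ and to exploit the fact that, as $\sigma$ ranges over $\Aut_\om(X)$, all these graphs carry the \emph{same} volume with respect to a fixed product K\"ahler metric. Once this is established, the Bishop--Lieberman--Fujiki compactness theorem for cycles of bounded volume on a compact K\"ahler manifold forces the image of $\Aut_\om(X)$ in the Barlet cycle space $\mathcal{C}(X\times X)$ to be relatively compact, and relative compactness translates into finiteness of the number of connected components.

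First I would equip $X\times X$ with the K\"ahler form $\Omega=\pr_1^*\om+\pr_2^*\om$ and compute the volume of the $n$-cycle $\Gamma_\sigma$, where $n=\dim_{\CC}X$, as $\frac{1}{n!}\int_{\Gamma_\sigma}\Omega^n$. Expanding by the binomial formula and using that $\pr_1|_{\Gamma_\sigma}\colon\Gamma_\sigma\to X$ is an isomorphism under which $\pr_2$ restricts to $\sigma$, each summand becomes $\int_X\om^k\wedge(\sigma^*\om)^{n-k}$. Since $\sigma\in\Aut_\om(X)$ means $\sigma^*[\om]=[\om]$ in $H^2(X;\RR)$, every one of these integrals equals $\int_X\om^n$, so the total volume is the constant $\frac{2^n}{n!}\int_X\om^n$, independent of $\sigma$.

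Next I would use that $\sigma\mapsto[\Gamma_\sigma]$ identifies $\Aut(X)$ with an open analytic subset of $\mathcal{C}(X\times X)$, the locus of $n$-cycles both of whose projections to $X$ are isomorphisms being open, so that $\Aut_\om(X)$ maps into the set $K$ of cycles of the single volume computed above. By Bishop's theorem, as extended to cycle spaces of compact K\"ahler manifolds by Lieberman and Fujiki, $K$ is a compact complex space; in particular the closure of the image of $\Aut_\om(X)$ has only finitely many irreducible components. Now $\Aut_\om(X)$ is smooth of pure dimension $d:=\dim\Aut_0(X)$ (note that $\Aut_0(X)\subseteq\Aut_\om(X)$, since a connected group acts trivially on $H^*(X;\RR)$, and that it is the identity component). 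The closure of any connected component $C$ of $\Aut_\om(X)$ is therefore an irreducible $d$-dimensional component of $\overline{\Aut_\om(X)}$; because each $C$ is open and dense in its own closure and two dense open subsets of an irreducible space must meet, distinct components have distinct closures. As there are only finitely many such irreducible components, $\Aut_\om(X)$ has finitely many connected components, i.e.\ $\Aut_0(X)$ has finite index in $\Aut_\om(X)$.

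Finally, any $\sigma\in\Aut_\QQ(X)$ acts trivially on $H^2(X;\QQ)\otimes\RR=H^2(X;\RR)$ and hence fixes $[\om]$, so $\Aut_\QQ(X)\subseteq\Aut_\om(X)$ and $\Ga_\QQ(X)=\Aut_\QQ(X)/\Aut_0(X)$ is a subgroup of the finite group $\Aut_\om(X)/\Aut_0(X)$, thus finite. I expect the real obstacle to be the compactness input in the third step: the volume computation and the cohomological bookkeeping are routine, but the passage from a constant-volume bound to finiteness of Lie-group components rests entirely on Bishop's theorem and its K\"ahler extension, together with the care needed to convert relative compactness in the cycle space into a genuine count of connected components.
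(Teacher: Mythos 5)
The paper does not actually prove this theorem: it is quoted from Lieberman \cite{Li78} without proof, so your proposal can only be measured against Lieberman's original argument, whose first half you reproduce correctly. The volume computation is right: for $\sigma\in\Aut_\om(X)$ each integral $\int_X\om^k\wedge(\sigma^*\om)^{n-k}$ equals $\int_X\om^n$ because it only depends on cohomology classes, so every graph $\Gamma_\sigma$ has volume $\frac{2^n}{n!}\int_X\om^n$; and the inclusion $\Aut_\QQ(X)\subseteq\Aut_\om(X)$ does give the ``in particular'' clause once the main statement is known. The genuine gap is in your third step, the conversion of cycle-space compactness into finiteness of the number of connected components --- precisely the step you yourself flag as the ``real obstacle''.

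Concretely: the closure $\overline{\Aut_\om(X)}$ of the image in the cycle space is merely a compact \emph{subset} of $K$; it is not known to be an analytic subspace, so its ``irreducible components'' are not defined, and their finiteness does not follow from compactness of $K$. Likewise, the closure $\overline{C}$ of a connected component $C$ of $\Aut_\om(X)$ is the Euclidean closure of an open subset of an analytic space, which in general is not analytic: the closed unit disc inside $\PP^1$ is the closure of an open set but is not an analytic subvariety, and it is certainly not an ``irreducible component'' of anything. The purely topological statement your argument implicitly relies on --- that a compact analytic space cannot contain infinitely many pairwise disjoint open subsets of top dimension --- is false, as $\PP^1$ with infinitely many disjoint open discs shows; so counting irreducible components of $K$ does not bound the number of components of $\Aut_\om(X)$. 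What closes this gap in Lieberman's proof is the group structure: each component $C=\sigma\Aut_0(X)$ is an orbit of the connected group $\Aut_0(X)$ acting holomorphically (indeed meromorphically, in the Lieberman--Fujiki sense) on the compact components of the cycle space, and for such actions orbit closures \emph{are} analytic subvarieties. Granting that, each $C$ is Zariski-dense in an irreducible component of $K$, two disjoint components cannot both be dense in the same irreducible component, and finiteness follows from the finiteness of the set of irreducible components of the compact analytic space $K$. This analyticity of orbit closures, which is a substantial piece of the Lieberman--Fujiki theory of meromorphic group actions and not a consequence of Bishop's compactness theorem alone, is the missing ingredient; without it your dense-open-sets argument does not go through.
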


For complex dimension $n=1$, it is well known that everything simplifies,  in fact for $n=1$ $\Aut_0(X) = \Aut_\QQ(X)$.
But already for $ n  = 2$ the situation is extremely delicate, hence  this paper is dedicated to the case of complex dimension $n=2$,
which provides examples where the different groups of components can have arbitrarily high cardinality.

For surfaces of general type, 
essentially by the Bogomolov--Miyaoka--Yau inequality (final result in \cite{miyaoka}), there is a constant $C$ such that $|\Aut_\QQ(X)|<C$ for any surface of general type (see \cite{Cai04}).  For  surfaces of general type the important open question is whether they are rigidified in the sense of 
\cite{handbook}, that is, $\Aut_*(X)$ is a trivial group (see the work of Cai-Liu-Zhang   \cite{clz} and of  the second author  \cite{CL18}, \cite{Liu18} for recent results in  the study of
the group $\Aut_\QQ(X)$).

For surfaces not of general type the aim is to describe the group 
$ \Ga_{\QQ}(X) : = \Aut_\QQ(X) / \Aut_0(X) $.

In 1975 Burns and Rapoport \cite{br} proved that, for a K3 surface $X$, $\Aut_\QQ(X) $ is a trivial group.
 Peters \cite{Pe79}, \cite{Pe80} began the study of $\Aut_\QQ(X) $ for compact K\"ahler surfaces. 
Automorphisms of surfaces were also investigated by Ueno \cite{ueno} and  Maruyama \cite{Ma71}
in the 70's, then by Mukai and Namikawa \cite{MN84}.

The main results of  this paper can be summarized in  the following main theorem, which is obtained from several more
precise theorems.

\medskip

\noindent {\bf Main Theorem.} {\em 
The indices $[\Aut_\QQ(X):\Aut_\ZZ(X)],\, [\Aut_\ZZ:\Aut_*(X)]$, and $[\Aut_*(X):\Aut_0(X)]$  can be arbitrarily large for smooth projective surfaces not of general type.}

\medskip

The first result which we prove (contradicting earlier assertions of other authors)  answers questions raised by Meersseman \cite{Me17} and Oguiso \cite{O20}:

\medskip

\noindent{\bf Theorem \ref{rational}.}
For each positive integer $m$ there exists a rational surface $X$, blow up of $\PP^2$, such that $\Aut_{\QQ}(X) = \Aut_*(X) \cong \ZZ/ m\ZZ$.

 A similar  example can be constructed for other non-minimal uniruled surfaces: just blowing up appropriately  any smooth projective surface with an (effective) $\CC^*$-action. 
 
 Surprisingly,  the same unboundedness phenomenon for $$\Ga_*(X): = \Aut_*(X) / \Aut_0(X)$$
  can happen also for minimal ruled surfaces:
   
 \begin{thm}
 Let $E$ be an elliptic curve and let $X : = \PP (\hol_E \oplus \hol_E(D))$ where $D$ is a divisor of even positive degree $d=2m$.
 
 Then $\Ga_*(X)$ surjects onto $(\ZZ/m\ZZ)^2 $.
 \end{thm}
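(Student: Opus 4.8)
The plan is to determine $\Aut_0(X)$ exactly, exhibit an explicit $(\ZZ/2m\ZZ)^2$ inside the component group $\Ga(X)=\Aut(X)/\Aut_0(X)$ coming from torsion translations of $E$, and then decide which of these classes are isotopically trivial by a $\pi_1(\mathrm{PGL}_2)=\ZZ/2\ZZ$ monodromy computation. For the first step, write $\mathcal{E}=\hol_E\oplus\hol_E(D)$ and $\pi\colon X=\PP(\mathcal{E})\to E$. A translation $t_p$ lifts to $X$ iff $t_p^*\mathcal{E}\cong\mathcal{E}\otimes M$ for some $M\in\Pic(E)$; since $t_p^*\mathcal{E}=\hol_E\oplus\hol_E(t_p^*D)$ and $\deg D=d=2m>0$, Krull--Schmidt forces $M=\hol_E$ and $t_p^*D\sim D$, which by Abel--Jacobi means exactly $dp=0$, i.e. $p\in E[d]$. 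As $E[d]$ is finite, the connected group $\Aut_0(X)$ acts trivially on the base, so it consists of fibrewise automorphisms. Computing global sections of $\mathcal{E}nd(\mathcal{E})$ (only the two diagonal scalars and the slot $H^0(E,\hol_E(D))$ survive, since $H^0(E,\hol_E(-D))=0$) identifies $\Aut_0(X)\cong\CC^*\ltimes H^0(E,\hol_E(D))$, a connected group of dimension $2m+1$, matching $h^0(X,\Theta_X)=h^0(E,\mathcal{E}nd_0(\mathcal{E}))=2m+1$ (the infinitesimal base translation does not lift).

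Next I would produce the torsion classes. For each $p\in E[2m]$, a choice of isomorphism $\psi_p\colon t_p^*\hol_E(D)\xrightarrow{\sim}\hol_E(D)$ gives a lift $\tilde t_p\in\Aut(X)$ of $t_p$; two lifts differ by an element of $\Aut_0(X)$, so $[\tilde t_p]\in\Ga(X)$ is well defined, and $p\mapsto[\tilde t_p]$ is a homomorphism which is injective (if $\tilde t_p\in\Aut_0(X)$ then $t_p=\id_E$). This embeds $(\ZZ/2m\ZZ)^2=E[2m]\hookrightarrow\Ga(X)$. Each $\tilde t_p$ preserves the fibre class and the unique negative section $C_-$, and acts as the homotopically trivial translation on $H^1(X;\ZZ)=H^1(E;\ZZ)$, so it is cohomologically trivial; the content of the theorem is to locate these classes relative to $\Aut_*(X)$.

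The decisive step, and the one I expect to be the main obstacle, is the isotopy analysis. Because $d$ is even, $X$ is diffeomorphic to the trivial bundle $E\times S^2$, so I would lift the base isotopy $\{t_{sp}\}_{s\in[0,1]}$ to a diffeotopy of $X$ and compare its time-$1$ map with $\tilde t_p$: they differ by a fibrewise gauge transformation $\beta_p$, whose isotopy class lies in $\pi_0$ of the fibrewise $\mathrm{PGL}_2(\CC)\simeq\mathrm{SO}(3)$ gauge group, namely in $H^1(E;\pi_1\mathrm{SO}(3))=H^1(E;\ZZ/2\ZZ)\cong(\ZZ/2\ZZ)^2$. In the trivialisation adapted to $\mathcal{E}=\hol_E\oplus\hol_E(D)$ the map $\tilde t_p$ is the diagonal M\"obius transformation $[u:v]\mapsto[u:\psi_p\,v]$, and the clutching constants of $\psi_p$ around the two generating $1$-cycles of $E$ record, via the fact that one full turn of the diagonal torus is the nontrivial loop in $\pi_1(\mathrm{PGL}_2(\CC))=\ZZ/2\ZZ$, precisely the two coordinates of $p$ reduced modulo $2$. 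Thus the obstruction map $E[2m]\to(\ZZ/2\ZZ)^2$ is reduction mod $2$, with kernel $E[m]=(\ZZ/m\ZZ)^2$.

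Finally, $\tilde t_p$ is isotopic to the identity exactly for $p\in E[m]$, so these classes lie in $\Ga_*(X)=\Aut_*(X)/\Aut_0(X)$ and yield an injection $(\ZZ/m\ZZ)^2\hookrightarrow\Ga_*(X)$; in particular $\Ga_*(X)$ surjects onto $(\ZZ/m\ZZ)^2$, as claimed. The technical heart is the clutching computation identifying the $\ZZ/2\ZZ$-valued obstruction with reduction modulo $2$ (consistent with the vacuous case $m=1$, where every nontrivial $2$-torsion lift is obstructed); everything else is bundle bookkeeping.
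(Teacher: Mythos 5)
Your framework is, up to the last step, the same as the paper's: you identify $\Aut_0(X)$ with the connected fibrewise group $\CC^*\ltimes H^0(E,\hol_E(D))$, show that the translations lifting to $X$ are exactly those by points of $E[d]$ (your Krull--Schmidt argument is the paper's degree comparison $(0,d)=(l,d+l)$ in different clothing), use the evenness of $d$ to trivialize $X$ differentiably as a fibrewise-projective bundle $E\times S^2$, and locate the obstruction to isotopy of a lift $\tilde t_p$ in $[E,\PP\GL(2,\CC)]=\Hom(\pi_1(E),\ZZ/2\ZZ)\cong(\ZZ/2\ZZ)^2$, using $\PP\GL(2,\CC)\simeq SO(3)$ and $\pi_2(SO(3))=0$. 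All of this is correct and parallels the paper's proof of Theorem~\ref{thm: ell rule unbounded}.

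The gap is the decisive claim that the obstruction map $E[2m]\to(\ZZ/2\ZZ)^2$ is reduction modulo $2$: this is asserted via ``clutching constants'' in one sentence, but it is precisely the technical heart, and it is delicate because no smooth trivialization of $X$ can respect the splitting $\hol_E\oplus\hol_E(D)$ (the line bundle $\hol_E(D)$ is smoothly nontrivial). Transported to $E\times S^2$, the fibrewise part of $\tilde t_p$ is $h(z+\tilde p)\,D(z)\,h(z)^{-1}$ with $h$ a necessarily non-diagonal gauge, and one must check that the $h$-contributions cancel along one generator of $\pi_1(E)$ and produce exactly a sign $(-1)^a$ along the other, where $2m\tilde p=a+b\tau$. (Carried out, this does confirm your claim, up to a harmless swap of the two $\ZZ/2\ZZ$-coordinates, so the kernel is indeed $E[m]$.) The efficient repair -- and it is what the paper actually does -- is to avoid computing the map at all: $p\mapsto[\beta_p]$ is well defined (different lifts of $t_p$ differ by elements of the connected group $\Aut_0(X)$, which give null-homotopic gauge maps) and is a group homomorphism (a product of lifts is a lift of the product, $\pi_1$ adds under pointwise products of maps into a topological group, and precomposing with a translation of $E$ does not change the homotopy class). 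Since the target is $2$-torsion, this homomorphism kills $2E[2m]=E[m]$, which is all the theorem needs. The paper packages this as the homomorphism $\pi\colon \Aut_\QQ(X)\to\Hom(\pi_1(E),\ZZ/2\ZZ)$ and shows $\Ker\pi\subseteq\Aut_*(X)$ by lifting $A$ to $S^3$ and using Sard's lemma to get a null-homotopy; your $\pi_2(SO(3))=0$ argument is the same point. Two smaller remarks: your ``isotopic to the identity \emph{exactly} for $p\in E[m]$'' would additionally require that a nontrivial obstruction class survives isotopies that are not fibre-preserving, which you do not address (and do not need); and to upgrade the inclusion $(\ZZ/m\ZZ)^2\hookrightarrow\Ga_*(X)$ to a surjection, note that $\Ga_*(X)$ is a subgroup of the finite abelian group $\Ga_\QQ(X)\cong E[2m]$, and a subgroup of a finite abelian group is also a quotient of it.
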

  
In the rest of the paper we consider more   general results concerning the various subgroups of the ladder in terms of the
 Enriques-Kodaira  classification of 
  compact
 K\"ahler surfaces.
 
 We first consider rational surfaces which are blow-ups of $\PP^2$,  using 
 Principle~\ref{prin: descend} of Section~\ref{sec: prelim}
 and then we pass to  consider minimal surfaces,
 starting from $\PP^1$-bundles over curves.
 
 We describe then the  situation for 
  non ruled surfaces: 
 the case of Kodaira dimension 
  $\kappa(X)=0$
  is pretty well understood, down here  a summary of the  results.

 \begin{itemize}[leftmargin=*]
  \item  $\Aut_\#(X) = \Aut_0(X)$ holds for each surface $X$ with $\kappa (X)=0.$ 
  \item
  For complex tori and their blow ups $X$, $ \Aut_0(X) = \Aut_\QQ(X)$.
 \item
 For K3 surfaces (hence on their blow ups) $\Aut_\QQ(X) = 
 \{ \id_X \}.
 $
 \item
 For 
   Enriques surfaces  Mukai and Namikawa \cite{MN84} proved that 
$|\Aut_\QQ(X)| \leq 4$, and  that  there are examples with $\Aut_\ZZ(X)= \ZZ/2\ZZ$.  
\item
For 
  hyperelliptic surfaces $X$, we show that  $\Aut_\ZZ(X)= \Aut_0(X) $ is isogenous to $ \Alb(X)$, 
while the group $ \Ga_{\QQ} = \Aut_\QQ(X) / \Aut_\ZZ(X)$
can be described in each case.

$ \Ga_{\QQ}$ is a group of order $\leq 12$, and the case of order $12$ occurs precisely with the alternating group $\mathfrak A_4$.
 \end{itemize}

 We postpone to the sequel to this  paper the full treatment of the more delicate   case 
 where the Kodaira dimension  $\kappa(X)=1$.
 
 In this paper  we use examples of surfaces  in this class in order to
 show unboundedness also for other quotients of the  ladder 
$$
   \Aut_0(X) \vartriangleleft \Aut_*(X) \vartriangleleft \Aut_\sharp(X)\vartriangleleft \Aut_\ZZ(X)
   \vartriangleleft \Aut_\QQ(X).
$$

Combining theorems \ref{Kod1min} and \ref{Kod1nonmin} we get:
\begin{theo}
i) For each positive integer $n$ there exists a  minimal surface $X$
 of Kodaira dimension $1$ such that 
$
[\Aut_\QQ(X):\Aut_\ZZ(X)] \geq n.$

ii) For each positive integer $n$ there exists a  (non minimal) surface $X$
 of Kodaira dimension $1$ such that $\Aut_*(X)=\{\id_{X}\}$, and  
$$
\Aut_\ZZ(X)= \ZZ/n\ZZ.
$$
\end{theo}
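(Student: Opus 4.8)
The plan is to deduce the two parts directly from the construction theorems \ref{Kod1min} and \ref{Kod1nonmin}, so that the substance lies in producing the examples. Both take place among properly elliptic surfaces $f\colon X\to B$, where the relevant automorphisms are fibre-preserving and act on the smooth fibres as translations; since translations act trivially on $H^*(\cdot;\QQ)$, such automorphisms automatically lie in $\Aut_\QQ(X)$, and by Lieberman's Theorem \ref{thm: Lieberman} the group $\Ga_\QQ(X)=\Aut_\QQ(X)/\Aut_0(X)$ is finite. Hence the indices in the statement are finite, and the task is to make them large.

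For part (i) I would realise minimal properly elliptic surfaces carrying a large group of fibrewise translations by torsion --- either translations by torsion points $\tau\in E[N]$ on an isotrivial model $(C\times E)/G$ with $g(B)\geq 2$, or translations by torsion sections in the Mordell--Weil group of a Jacobian elliptic surface, the requisite torsion being produced by inserting sufficiently many multiple fibres via logarithmic transformations. Each such translation is trivial on $H^*(X;\QQ)$ but, on the quotient or after the log-transforms, acts nontrivially on $\Tors H^*(X;\ZZ)$ and on $\pi_1(X)$; it therefore lies in $\Aut_\QQ(X)\setminus\Aut_\ZZ(X)$, and choosing the torsion order (equivalently, the number of multiple fibres) large enough forces $[\Aut_\QQ(X):\Aut_\ZZ(X)]\geq n$.

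For part (ii) I would start from a minimal elliptic surface $Y$ with a torsion section $P$ of order $n$ lying in the \emph{narrow} part of the Mordell--Weil group, so that the translation $\psi=t_P$ meets only the identity components of the fibres and thus acts trivially on all of $H^*(Y;\ZZ)$, while having isolated fixed points on the singular fibres. Blowing up a $\psi$-invariant set $S$ of such fixed points yields a non-minimal $X=\Bl_S Y$ on which $\psi$ still fixes every exceptional class, so by Principle \ref{prin: descend} one reads off $\Aut_\ZZ(X)=\langle\psi\rangle\cong\ZZ/n\ZZ$, while the blow-up also gives $\Aut_0(X)=\{\id\}$. The remaining point, exactly as for the rational surfaces of Theorem \ref{rational}, is the rigidification $\Aut_*(X)=\{\id\}$: the configuration of $(-1)$-curves pins every automorphism down to the minimal model, and one argues that the only such map lying in $\Diff_0(X)$ is the identity.

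The main obstacle throughout is the passage from rational to integral cohomology. That fibre translations are trivial on $H^*(\cdot;\QQ)$ is immediate; the entire effect being measured lives in $\Tors H^*(\cdot;\ZZ)$ and in $\pi_1$, so one must genuinely compute this torsion for the quotients, logarithmic transforms and blow-ups involved and verify that the chosen automorphisms act on it with the prescribed order. In part (ii) there is the additional, and genuinely topological, difficulty that $\Aut_*(X)=\{\id\}$ is not detected by cohomology at all: one needs an isotopy obstruction --- via the fixed-point data of $\psi$ on $X$, or via its action on $\pi_1(X)$ --- to show that the cohomologically trivial automorphism $\psi$ is nonetheless not $C^\infty$-isotopic to the identity.
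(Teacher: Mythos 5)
Your overall skeleton agrees with the paper: two separate constructions of Kodaira dimension $1$ (Theorems~\ref{Kod1min} and \ref{Kod1nonmin}), with part (ii) obtained by blowing up a minimal surface and invoking Principle~\ref{prin: descend}. But the mechanisms you propose to realize the examples have genuine gaps in both parts.

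In part (i), your candidate elements of $\Aut_\QQ(X)\setminus\Aut_\ZZ(X)$ are \emph{fibrewise translations}, and these cannot do the job. On an isotrivial model $(C\times E)/G$, the translation by $\tau\in E[N]$ descends only if it commutes with the $G$-action on $E$: if $G$ acts on $E$ by translations (the only way to keep all of $E[N]$ for large $N$), the descended translations lie in $\Aut_0(X)\subset\Aut_\ZZ(X)$ and separate nothing; if $G$ acts on $E$ through group automorphisms, only a bounded group of translations (e.g.\ $E[2]$) descends. On a Jacobian elliptic surface of Kodaira dimension $1$ the situation is worse: the zero section $O$ is an irreducible curve with $O^2=-\chi(\hol_X)<0$, and translation by a nonzero torsion section $P$ carries $O$ to the distinct curve $P$, so by Principle~\ref{prin: negative} it is not even in $\Aut_\QQ(X)$; performing logarithmic transformations does not obviously repair this, and you give no computation. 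Moreover, translations fix every multiple fibre, so the torsion classes $c_i$ of Principle~\ref{prin: multiplefibres} — the only integral invariant you point to — see nothing. The paper's element is of a completely different nature: in Theorem~\ref{Kod1min} it is induced by $(x,y)\mapsto(\xi x,y)$ on $B\times F$, i.e.\ it acts \emph{nontrivially on the base} of the elliptic fibration $X=(B\times F)/\De_G\to B/G=\PP^1$, cyclically permuting the $n$ fibres of multiplicity $2$; it is $\QQ$-trivial by Principle~\ref{SIPU}, and it is integrally nontrivial (together with all its powers) precisely because the permutation of the torsion classes $c_1,\dots,c_n$ in $H_1(X,\ZZ)$ is faithful once $n\geq 3$, by Principle~\ref{prin: multiplefibres}.

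In part (ii), your starting surface is internally inconsistent. If $Y$ is Jacobian with $\chi(\hol_Y)>0$, then as above $t_P\notin\Aut_\QQ(Y)$, whatever the narrow condition buys; if instead $\chi(\hol_Y)=0$ (the only way a section can fail to be a negative curve), then $\chi_\topo(Y)=12\chi(\hol_Y)=0$, the fibration has no singular fibres at all, hence $t_P$ has no fixed points and there is no $\psi$-invariant set $S$ to blow up. The paper takes instead $Y=(C\times E)/\De_G$ with $G=\ZZ/n\ZZ$ acting on $C$ \emph{with fixed points} and on $E$ by an order-$n$ translation; then $\Aut_\ZZ(Y)=\Aut_0(Y)\cong E$ by Principle~\ref{SIPU}(III), and blowing up a point on the multiplicity-$n$ fibre gives $\Aut_\ZZ(X)\cong\Stab_G(t_0)\cong\ZZ/n\ZZ$ via Principle~\ref{prin: descend}. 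Finally, the rigidification $\Aut_*(X)=\{\id_X\}$ cannot be reached by the obstructions you suggest: a homotopically trivial automorphism acts trivially (up to inner automorphism) on $\pi_1(X)$ and has Lefschetz number $\chi_\topo(X)$, so neither the $\pi_1$-action nor fixed-point data distinguishes it from the identity. The key missing idea is the paper's Principle~\ref{prin: rigidity}: harmonic-map rigidity (Hartman, Theorem~\ref{thm: Hartman}) applied to the generically finite map $X\to Y$ onto the nonpositively curved $Y$, where $\chi_\topo(X)=1\neq 0$ forces every map in a harmonic homotopy to have a fixed point, whence the homotopy is constant and even $\Aut_\sharp(X)=\{\id_X\}$.
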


The difference between $\Aut_\sharp(X)$ and   $\Aut_*(X)$ is still mysterious, even if we show that
in many cases the two groups coincide. We pose the following

\noindent {\bf Question 1.} Is $[\Aut_\sharp(X) : \Aut_*(X)]$ uniformly  bounded?

\section{Elementary observations and basic principles}\label{sec: prelim}
\begin{principle}\label{prin: negative}
Let $ \sigma \in \Aut_\QQ(X )$, where $X$ is a (compact complex connected) surface, and let $C$ be an irreducible curve with $C^2 < 0$.

Then $\sigma(C) = C$.
\end{principle}
\begin{proof} Assume in fact that the irreducible curve $ \sigma (C)$ is different from  $C$: then  $  C \cdot \sigma(C) \geq 0$.

But since $\sigma(C)$ has the same rational cohomology class of $C$, we have  $C \cdot \sigma(C) = C^2 < 0$, a contradiction.
\end{proof}

\begin{principle}\label{prin: red fibre}
Let $ f : X \ra B$ be a fibration of the surface $X$ onto a curve, and $ \sigma \in \Aut_\QQ(X )$: 
then $\sigma$ preserves the fibration, that is, there is an action   $\bar{\sigma}$ on $B$ such that $\bar{\sigma} \circ f = f \circ \sigma$.

If the genus of $B$ is at least $1$, then the action of $\bar{\s}$ on $B$ is trivial, unless if  $B$ has genus  $1$
and $\bar{\s}$ has no fixpoints on $B$. 

Moreover, if $F''$ is a reducible fibre, then   $\sigma(F'') = F''$.
\end{principle}

 For simplicity, and because of equivariance ($\bar{\sigma} \circ f = f \circ \sigma$),  by a small abuse of notation
 we shall use the notation
$\s$ also for $\bar{\s}$.

\begin{proof} Let $F$ be an irreducible fibre of $f$. Then $\sigma(F) \cdot F = 0$, hence $\sigma(F)$ is contained in another
fibre of $f$; since $\sigma(F)$ is  irreducible, it is another fibre of $f$.

Since $ \sigma \in \Aut_\QQ(X )$, and $H^1(B, \QQ) \subset H^1 (X, \QQ)$, by Lefschetz's principle $\s$
acts trivially on $B$ if  the genus of $B$ is at least $2$, and is a translation if the genus of $B$ is $=1$.

The last  assertion follows  from Principle~\ref{prin: negative} and Zariski's lemma (the components of $F''$ have negative self-intersection).
\end{proof}

\begin{principle}\label{prin: multiplefibres} 
Let $ f : X \ra B$ be a fibration of the surface $X$ onto a curve, $ \sigma \in \Aut_\ZZ(X )$,
and let $ F'' = m F'$ be  a multiple fibre of $f$ with $F'$ irreducible. Then $\sigma(F'') = F''$, unless possibly if 
the genus $g$ of $B$ is $\leq 1$, $m=2$,
there are only two multiple fibres  with multiplicity $2$, they are isomorphic to each other, and all the other
multiple fibres  have
odd multiplicity.
 \end{principle}

\begin{proof}
By Principle \ref{prin: red fibre},  $\sigma$ acts on the fibration, in particular fixing the reducible fibres, and permuting the multiple fibres
having the same multiplicity.  Let $g$ be the genus of $B$. We saw in Principle \ref{prin: red fibre} that $\sigma$ acts as the identity on $B$ if $g \geq 2$.

We have \cite{barlotti}, \cite{cime} the exact sequence of the orbifold fundamental group of the fibration $f$:
$$ \pi_1 (F ) \ra \pi_1(X) \ra \pi_1^{orb} (f) \ra 1,$$
where $F$ is a smooth fibre, and, letting $\{P_1, \dots, P_r\}$ be the set of points whose inverse images are the multiple fibres
of $f$, $f^{-1}(P_j) = m_j F'_j$, then 
$$\pi_1^{orb} (f) : = \langle \al_1, \dots, \al_g, \be_1, \dots, \be_g, c_1, \dots c_r \mid \Pi_1^g [\al_i, \be_i] c_1\cdot  \dots \cdot  c_r=1,
c_j^{m_j} =1\rangle.$$
 $\s$ acts on the fibration, hence, choosing a path from the base point $x_0$ to $\s(x_0)$, we get an action of
$\s$ on $\pi_1(X)$, which is  defined not uniquely, but  only up to an inner automorphism, and which leaves invariant the normal subgroup
which is the image of $ \pi_1 (F )$, since $\s$ sends a smooth fibre to another smooth fibre. Hence with this choice of such a path, we get an action of $\s$ on $ \pi_1^{orb} (f)$.

 If we pass to the respective Abelianizations, we get a uniquely  defined action.

Hence we have a surjection
$$ H_1(X, \ZZ) \ra ( \oplus_1^g \ZZ \al_j  \oplus_1^g \ZZ \be_j  \oplus_1^r (\ZZ / m_i \ZZ) c_i)  / \langle (\sum_i c_i)\rangle$$
on which $\s$ acts equivariantly.

Since $\sigma$ is now assumed to act  trivially on homology, it acts trivially on the quotient group. Assume that $F''$ corresponds to the point 
$P_1$.
Clearly $\sigma$ can only send $F''$ to a multiple fibre of the same multiplicity, and isomorphic to $F''$.
Assume that there is such a fibre, and that it corresponds to the point $P_2$. There remains to see whether $c_1$ and $c_2$
can have the same image in the quotient group.

This means that the vector $e_1 - e_2\in \oplus_1^r \ZZ e_i
$ is an integral linear combination of the  relation vectors $m_1 e_1,\, \dots ,\, m_r e_r,\, e : = \sum_1^r e_i$:
\[
e_1-e_2=a_1m_1e_1+\dots + a_rm_r e_r + ae
\]
where $a_1, \dots, a_r$ and $a$ are integers.

Since $m_1 = m_2$, this implies that $1-a , -1 -a$ are divisible by $m_1$,
hence $ 2 $ is divisible by $m_1$, hence $m_1=2$. Since $a$ is then odd, and $m_j$ divides $a$ for all $ j \geq 3$,
this is possible if and only if all the other $m_j$ for $ j \geq 3$ are odd, and then one can take $a$ as the least common multiple of
$m_3, \dots, m_r$.
\end{proof}

The next Principle~\ref{SIPU} is a special case of a more general result, and is
based on the technique of surfaces isogenous to a product and of unmixed type (\cite{isogenous}).

\begin{defin}
A surface  $X$
 is said to be  a SIP = Surface Isogenous to a Product if  $X$ is the quotient
of a product of curves of genus $\geq 1$ by the free action of a finite group $G$:
$$
X
 = (C_1 \times C_2)/G.$$
We speak of a higher product if both curves $C_1, C_2$ have genus at least $2$.

The action of $G$ is said to be UNMIXED if  $G$ acts on each factor $C_i$, and diagonally on the product,
$ \sigma(x,y) : = (\sigma x, \sigma y),$ so that more precisely 
$
X
 = (C_1 \times C_2)/\De_G,$
where $\De_G \subset G \times G$ is the diagonal subgroup and $G \times G$ acts on $C_1 \times C_2$.

We can assume that we have a minimal realization, that is, $G$ acts faithfully on each  factor $C_i$.

Observe that 
$\Aut(S)$
 contains the quotient $N_{\De_G}/ \De_G$, where $N_{\De_G}$ is the normalizer of $\De_G$
inside $ \Aut(C_1) \times \Aut(C_2)$ (and the two groups are  equal if we have a higher  product with $C_1, C_2$ not isomorphic).

\end{defin}

 \begin{principle}\label{SIPU} Let 
$X$
  be  a SIP of unmixed type, that is, a surface isogenous to a product of unmixed type,
 with a minimal realization 
 $
X
  = (C_1 \times C_2)/\De_G.$ 

Then 
$\Aut_\QQ(X )$
is the subgroup of  $N_{\De_G}/ \De_G$ corresponding to automorphisms 
$h(x,y) =  (h_1(x), h_2(y))$ acting trivially on 
$$ H^* (C_1 \times C_2, \QQ)^{\De_G}.$$
In particular $h_i$ acts trivially on $ H^1 (C_i, \QQ)^{G} = H^1(C_i /G, \QQ)$,
and  

(I) if $C_2 =:E$ has genus $1$ and $G$ acts freely on it, then $h_2$ is a translation;

(II) if $C_1$ has genus $\geq 2$ and $G$ acts freely on it, then we may represent an element
in  
$\Aut_\QQ(X)$
 by such an automorphism $h$ with $h_1 = \id_{C_1}$
and $h_2 \in Z_G$, where $Z_G$ is the centralizer of $G$ inside $\Aut(C_2)$.

(III) Assume that  $C_2 =: E$ has genus $1$,  and $G$ acts freely on $E$. Assume  moreover that 
if $C_1$ has  genus $g_1 \geq 2$, then  the orders of the stabilizers of points of $C_1$ are not of the form: $(2,\, 2,\, m_3,\, \dots)$, where the $m_i$ 's  are odd numbers.

  Then all automorphisms in 
$ \Aut_\ZZ(X)$
 are represented by pairs 
  $(h_1, h_2)\in \Aut(C_1) \times \Aut(E)$
  with $h_1 = \id_{C_1}$ and $h_2$ a translation. 
 In this case 
$ \Aut_\ZZ(X) =  \Aut_0(X) \cong E$.
 \end{principle}
\begin{proof}
For each $i=1,2$ we have a fibration $f_i :  X \ra C_i /G$, and, by Principle 3, 
$H : =  \Aut_\QQ(X)$
acts equivariantly on 
$X$
 and $C_i /G$. 

We have \cite{barlotti}, \cite{cime} the orbifold fundamental group of the fibration $f_i$:
$$ 1 \ra \pi_1 (C_j) \ra 
\pi_1(X)
\ra \pi_1^{orb} (f_i) \ra 1,$$
on which $H$ acts (here $\{i,j\} = \{1,2\}$).

Hence the elements of $H$ preserve the characteristic subgroup $\pi_1 (C_1) \times \pi_1 (C_2) $
of $\pi_1(X)$
and lift to $C_1 \times C_2$, preserving the horizontal and vertical leaves. 
Therefore these are represented by automorphisms in $ \Aut(C_1) \times \Aut(C_2)$.
Since such lift $h =(h_1, h_2)$ induces an action on  $(C_1 \times C_2)/\De_G$
we see that $h \in N_{\De_G}$ and $H$ is then a subgroup of $N_{\De_G}/ \De_G$.

Observing that $$ 
H^* (X, \QQ)
 \cong H^* (C_1 \times C_2, \QQ)^{\De_G},$$
we obtain the first assertion.

The second follows since 
$$ 
H^1 (X, \QQ)
\cong H^1 (C_1, \QQ)^G \oplus  H^1 (C_2, \QQ)^G = H^1 (C_1/G, \QQ) \oplus  H^1 (C_2/G, \QQ).$$

(I): then $H^1 (C_2, \QQ) =   H^1 (C_2/G, \QQ)$ and since $h_2$ acts trivially on it, it is a translation.

(II): then $C_1 / G$ has genus $\geq 2$, hence $h_1$ acts trivially on $C_1 / G$, hence $h_1 \in G$.
Multiplying by an element in $\De_G$, we may assume that $h_1 = \id_{C_1}$. Then the condition that
$(\id_{C_1}, h_2) \in N_{\De_G}$ is equivalent to $h_2 \in Z_G$.

(III): since  $ h \in 
\Aut_\ZZ(X)$, 
by Principle \ref{prin: multiplefibres}, $h$ acts on the fibration $f_1$ preserving its multiple fibres
unless possibly if  the genus $g'_1$ of $ C_1 / G$ is at most $1$ and   the multiplicities of the
multiple fibres  are  of the form: $(2,\, 2,\, m_3,\, \dots)$, where the $m_i$ 's  are odd numbers.

 We observe that in our situation these multiplicites are equal to the orders of the stabilizers of points of $C_1$: if $g_1 \geq 2$
 by assumption  these 
  are not of the form: $(2,\, 2,\, m_3,\, \dots)$, where the $m_i$'s  are odd numbers. If instead $g_1=1$,
  and these orders have this form, 
  then the quotient $ C_1 / G$ has genus $g'_1 = 0$, and Hurwitz' formula yields 
  $$ 2 = \frac{1}{2} +  \frac{1}{2}  + \sum_i ( 1 - \frac{1}{m_i}) ,$$
  which is manifestly impossible since, for $m_i$ odd, 
  $$ \frac{2}{3}  \leq ( 1 - \frac{1}{m_i}) < 1.$$ 

Therefore $h_1$
acts on $C_1 \ra C_1 /G$  fixing the branch points in $ C_1 /G$.  Since $h_1$ acts as the identity
on the cohomology of $ C_1 /G$, $h_1$ acts as the identity on $ C_1 /G$ if the
quotient has genus $g'_1 \geq 2$, or if $g'_1 =1$ and there is a branch point,
or if $g'_1 =0$ and there are at least $3$ branch points. 
 Since $C_1$ has genus $\geq 2$,
 one of the three possibilities must occur,
and $h_1 \in G$. Multiplying by an element in $\De_G$, we may assume that $h_1 = \id_{C_1}$:  (I)
shows that $h_2$ is a translation.  The group $\{(\id_{C_1}, h_2) \mid h_2 \in E \}  \cong E$ has an action which clearly descends to 
$X$.
Therefore we have shown that 
$ \Aut_\ZZ(X) =  \Aut_0(X) \cong E$.
\end{proof}
\medskip
Directly from
Principle~\ref{prin: negative} follows the next Principle~\ref{prin: descend}:

\begin{principle}\label{prin: descend}
Let $X$
be a compact complex surface, and let 
 $X=X_{n}\xrightarrow{f_n} X_{n-1}\xrightarrow{f_{n-1}} \cdots \xrightarrow{f_2}X_{1} \xrightarrow{f_1} X_{0}$
be a sequence of blow-downs of $(-1)$-curves. For $0\leq k\leq n-1$, let 
$P_k\in X_{k}$
 be the blown-up point. Then,   if 
$\Bir(X)$
 denotes the group of bimeromorphic self maps of 
$X$, then:
\begin{multline}\label{eq: descend Q}
\Aut_\QQ(X) = \{\sigma\in  \Aut_\QQ(X_0) \subset \Bir(X)\mid \text{ for any } 0\leq k\leq n-1, \\
  \sigma_k:=\sigma|_{X_{k}} \in \Aut_\QQ(X_{k})  
\text{ is such that } \sigma_k(P_{k})=P_{k}
 \}
\end{multline}
and
\begin{multline}\label{eq: descend Z}
\Aut_\ZZ(X) = \{\sigma\in \Aut_\ZZ(X_0) \subset \Bir(X)\mid \text{ for any } 0\leq k\leq n-1, \\
 \sigma_k:=\sigma|_{X_{k}} \in\Aut_\ZZ(X_{k})
 \text{ is such that }\sigma_k(P_{k})=P_{k}
 \}
 \end{multline}
\end{principle}
\begin{proof}
For $\sigma\in\Aut_\QQ(X)$, $\sigma$ acts trivially on  
\[
H^2(X,\QQ)= f_n^*H^2(X_{n-1}, \QQ)\oplus \QQ[E_n],
\]
where $E_n$ is the exceptional divisor of $f_n$, so it preserves $E_n$ by Principle~\ref{prin: negative}.  It follows that $\sigma$ descends to a cohomologically trivial automorphism of  $X_{n-1}$ preserving 
$P_{n-1}$.
 Conversely, any $\sigma\in \Aut_{\QQ}(X_{n-1})$ fixing $P_{n-1}$  lifts to a cohomologically  trivial automorphism of $X_{n}$.
 By induction on $k$, we obtain the equality \eqref{eq: descend Q}.

Exactly the same argument yields \eqref{eq: descend Z}.
\end{proof}

 We need the following rigidity of harmonic maps into Riemannian manifolds with nonpositive curvature. Note that holomorphic maps between K\"ahler manifolds are harmonic with respect to the K\"ahler metrics.
\begin{thm}\label{thm: Hartman}
Let $M$ and $N$ be compact Riemannian manifolds, such that $N$ has nonpositive sectional curvature.
If $\phi_0, \phi_1\colon  M \rightarrow N $ are homotopic harmonic maps, 
 then there is a $C^\infty$ homotopy $\Phi\colon M\times [0,1]\rightarrow N$ from $\phi_0$ to $\phi_1$ with the following properties:
\begin{enumerate}[leftmargin=*]
\item denoting $\phi_t(x):=\Phi(x,t)$ for $(x, t)\in M\times [0,1]$, the maps $\phi_t\colon M\rightarrow N$ are harmonic;
\item for fixed $x$, the arc $[0,1]\rightarrow N,\, t\mapsto \phi_t(x)$ is a geodesic arc with length independent of $x$, and $t$ proportional to the arc-length;
\item if for each $0\leq t\leq 1$, there is a point $x_t\in M$ such that $\phi_0$ and $\phi_t$ coincide at $x_t$, then $\Phi$ is the constant homotopy, that is, $\phi_t =\phi_0$ for any $0\leq t\leq 1$.
\end{enumerate}

\end{thm}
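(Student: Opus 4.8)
This is Hartman's rigidity theorem, and the plan is to give the harmonic-map-theoretic proof built on the second variation of energy in nonpositive curvature. Since $N$ is compact with nonpositive sectional curvature, its universal cover $\tilde N$ is a Cartan--Hadamard manifold: complete, simply connected, and such that any two points are joined by a unique geodesic depending smoothly on its endpoints. First I would lift everything to the universal covers. Because $\phi_0$ and $\phi_1$ are homotopic, a chosen lift of a homotopy between them shows that the lifts $\tilde\phi_0,\tilde\phi_1\colon \tilde M\to\tilde N$ can be taken equivariant with respect to one and the same representation $\rho\colon \pi_1(M)\to \mathrm{Isom}(\tilde N)$, the equivariance representation being locally constant in the homotopy parameter and hence independent of it. I then set $\tilde\phi_t(\tilde x)$ to be the point dividing the unique geodesic from $\tilde\phi_0(\tilde x)$ to $\tilde\phi_1(\tilde x)$ in the ratio $t:(1-t)$; uniqueness of geodesics makes this $\rho$-equivariant and smooth, so it descends to a smooth homotopy $\Phi(x,t)=\phi_t(x)$ on $M$, each arc $t\mapsto\phi_t(x)$ being a geodesic parametrized proportionally to arc length. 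This produces the candidate homotopy and already gives the geodesic structure asserted in (2).

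The heart of the argument is the convexity of the energy $E(\phi_t)=\tfrac12\int_M|d\phi_t|^2$. Writing $V=\partial_t\Phi$ for the variation field, which satisfies $\tfrac{\nabla}{\partial t}V=0$ as each arc is a geodesic, the second variation formula reads $\tfrac{d^2}{dt^2}E(\phi_t)=\int_M\bigl(|\nabla V|^2-\sum_\alpha\langle R^N(V,d\phi_t(e_\alpha))d\phi_t(e_\alpha),V\rangle\bigr)$ for an orthonormal frame $\{e_\alpha\}$ of $M$, and both integrands are nonnegative since the sectional curvature of $N$ is nonpositive; hence $t\mapsto E(\phi_t)$ is convex. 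Because $\phi_0$ and $\phi_1$ are harmonic, $\tfrac{d}{dt}E=-\int_M\langle\tau(\phi_t),V\rangle$ vanishes at $t=0$ and $t=1$; a convex function with zero derivative at both endpoints has identically zero derivative, so $E(\phi_t)$ is constant and $\tfrac{d^2}{dt^2}E\equiv 0$. The two nonnegative integrands must therefore vanish pointwise: $\nabla V\equiv 0$ and $\langle R^N(V,d\phi_t(e_\alpha))d\phi_t(e_\alpha),V\rangle=0$ for every $\alpha$. From $\nabla V\equiv 0$ we get that $|V|$ is constant on $M$, and since $|V(x)|$ is exactly the length of the geodesic $s\mapsto\phi_s(x)$, this is the length independence in (2). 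Part (3) is then immediate: $d(\phi_0(x),\phi_t(x))=t\,|V|\equiv t\,r$ for the constant total length $r$, so a coincidence point $x_t$ with $t>0$ forces $r=0$ and hence $\phi_s\equiv\phi_0$.

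It remains to prove (1), that each $\phi_t$ is harmonic, and this is where I expect the genuine technical work. Differentiating the tension field of the slices gives $\tfrac{\nabla}{\partial t}\tau(\phi_t)=\sum_\alpha\nabla_{e_\alpha}\nabla_{e_\alpha}V+\sum_\alpha R^N(V,d\phi_t(e_\alpha))d\phi_t(e_\alpha)$; the first sum vanishes because $V$ is parallel. For the second sum the plan is to invoke the standard curvature lemma that, in nonpositive sectional curvature, $\langle R(X,Y)Y,X\rangle=0$ forces $R(X,Y)Y=0$ (polarize the nonpositive quartic $s\mapsto\langle R(X+sW,Y)Y,X+sW\rangle$, whose maximum at $s=0$ kills the linear term for every $W$). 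Applying this with $X=V$ and $Y=d\phi_t(e_\alpha)$, the vanishing sectional terms established above upgrade to $R^N(V,d\phi_t(e_\alpha))d\phi_t(e_\alpha)=0$ for each $\alpha$, so the whole second sum vanishes. Thus $\tfrac{\nabla}{\partial t}\tau(\phi_t)\equiv 0$, and since $\tau(\phi_0)=0$ by harmonicity of $\phi_0$, we conclude $\tau(\phi_t)\equiv 0$, i.e. every $\phi_t$ is harmonic. The main obstacle is the bookkeeping of the second variation formula in the mapping setting together with the curvature lemma that turns the scalar (sectional) vanishing into the vectorial identity $R^N(V,d\phi_t(e_\alpha))d\phi_t(e_\alpha)=0$; once these are in place, all three assertions follow along the lines above.
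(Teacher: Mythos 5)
Your treatment of parts (1) and (2) --- constructing the geodesic homotopy through the universal cover of $N$ and then exploiting convexity of the energy via the second variation formula --- is a legitimate and essentially complete route, and it is genuinely different from the paper's: the paper simply invokes Hartman's results \cite[(G)]{Har67} for those two statements and devotes its entire proof to part (3). Your second variation computation, the conclusion that the variation field $V$ is parallel (hence $|V|$ is constant on $M$, which gives the length statement in (2)), and the polarization lemma upgrading $\langle R^N(V,Y)Y,V\rangle=0$ to $R^N(V,Y)Y=0$ for the harmonicity of the slices are all standard and correct, modulo routine bookkeeping.

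The genuine gap is in your proof of (3), which is exactly the part the paper proves in detail. You assert $d(\phi_0(x),\phi_t(x))=t\,|V|$, but this equality is false in general: the arcs $s\mapsto\phi_s(x)$ are minimizing geodesics in the universal cover $\tilde N$, where you built them, and their projections to $N$ are geodesics that need not be minimizing in $N$, so only the inequality $d(\phi_0(x),\phi_t(x))\leq t\,|V|$ holds. A coincidence point $\phi_t(x_t)=\phi_0(x_t)$ therefore only says that $s\mapsto\phi_s(x_t)$, $s\in[0,t]$, is a geodesic loop, and a compact nonpositively curved $N$ always carries nonconstant geodesic loops. Concretely, take $M=N=\RR^2/\ZZ^2$ flat, $\phi_0=\phi_1=\id_N$, and $\Phi(x,t)=x+t\gamma$ for a nonzero $\gamma\in\ZZ^2$: both boundary maps are harmonic, every arc is a geodesic of length $|\gamma|>0$ parametrized proportionally to arc length, and at $t=1$ every point of $M$ is a coincidence point, yet the homotopy is not constant. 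This shows both that your deduction (``a coincidence point $x_t$ with $t>0$ forces $r=0$'') fails, and that no proof of (3) can use the hypothesis at a single value of $t$ only: the assumption ``for each $0\leq t\leq 1$'' is essential. The paper uses it as follows: choose $\delta>0$ so that any two points of $N$ at distance less than $\delta$ are joined by a unique minimizing geodesic, partition $[0,1]$ so that consecutive slices $\phi_{t_i},\phi_{t_{i+1}}$ are uniformly $\delta$-close, and induct on $i$: assuming $\phi_{t_i}=\phi_0$, the geodesic arc $t\mapsto\phi_t(x_{t_{i+1}})$, $t\in[t_i,t_{i+1}]$, has both endpoints equal to $\phi_0(x_{t_{i+1}})$ and is short, hence constant; by (2) its length is independent of the point of $M$, so it is zero everywhere and $\phi_t=\phi_0$ on the whole interval. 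Your argument for (3) needs to be replaced by a step of this kind.
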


\begin{proof}
In view of \cite[(G)]{Har67}, only the last statement needs a proof. 

Let $d\colon N\times N \rightarrow \RR_{\geq 0}$ denote the distance function of the Riemannian manifold $N$. Since $N$ is compact, there is a constant $\delta>0$ such that for any $y_1, y_2\in N$ with $d(y_1,y_2)<\delta$ there is a unique minimizing geodesic joining $y_1$ and $y_2$. 

By the continuity of the homotopy $\Phi$, one can find a partition of the unit interval $0=t_0<t_1<\dots<t_k=1$, such that for each $0\leq i\leq k-1$
\[
|\phi_{t_i}, \phi_{t_{i+1}}|_\infty:=\max_{x\in M} d(\phi_{t_i}(x), \phi_{t_{i+1}}(x)) <\delta.
\]

We will show by induction on $i$ that $\phi_t=\phi_0$ for any $t\in [t_{i}, t_{i+1}]$. We can assume that the equality $\phi_{t_{i}}=\phi_0$ has been proven. By assumption, for each $0\leq i\leq k$, we have a point $x_{t_i}\in M$ such that $\phi_{t_i}(x_{t_i}) = \phi_0(x_{t_i})$. Since $|\phi_{t_i}, \phi_{t_{i+1}}|_\infty<\delta$, the arc $[t_i, t_{i+1}]\rightarrow N,\, t\mapsto \phi_t(x_{t_{i+1}})$ is the unique minimizing geodesic arc with the same starting and ending points, namely $\phi_0(x_{t_{i+1}}) = \phi_{t_{i+1}}(x_{t_{i+1}}) $; this means that it is the constant arc at $\phi_0(x_{t_{i+1}})$. By (2), we infer that $\phi_t=\phi_{t_i} = \phi_0$ for any $t\in [t_{i}, t_{i+1}]$; see also \cite[(F)]{Har67}. This finishes the induction step and hence the proof.
\end{proof}

\begin{principle}\label{prin: rigidity}
Let $X$ a compact K\"ahler manifold with topological Euler number $\chi_\topo (X)\neq 0$. Suppose that there is a generically finite proper holomorphic map $\rho\colon X\rightarrow Y$ onto a compact K\"ahler manifold $Y$ with nonpositive sectional curvature. Then 
\[
\Aut_\sharp(X)=\{\id_X\}.
\]
\end{principle}
\begin{proof} 
Let $\sigma\in\Aut_\sharp(X)$ be a homotopically trivial automorphism, with homotopy $\Sigma\colon X\times [0,1]\rightarrow X$ from 
 $\id_X$ to $\sigma$.
 By Theorem~\ref{thm: Hartman}, we can assume that $\sigma_t(x):=\Sigma(x,t)$ is harmonic in $x$ for each fixed $t\in[0,1]$. 

 For each $t\in[0,1]$, we have $[\Gamma_{\sigma_t}]= [\Delta_X]\in H^{2n}(X\times X)$, where $n=\dim X$, $\Gamma_{\sigma_t}$ denotes the graph of $\sigma_t$ and $\Delta_X\subset X\times X$ is the diagonal. Then 
\[
[\Delta_X]\cdot [\Gamma_{\sigma_t}] = [\Delta_X]^2 = \chi_\topo(X) \neq 0.
\]
and it follows that  $\Delta_X\cap \Gamma_{\sigma_t}\neq \emptyset$. In other words, there exists a point  
 $x_t\in X$
fixed by $\sigma_t$.

 The map $\Sigma$ gives a homotopy from 
 $\rho\circ\sigma_t$
to $\rho$
 for each $t\in [0,1]$.
 
 Since 
  $\rho\circ\sigma_t(x_t)=\rho(x_t)$,  
 we have $\rho\circ\sigma_t = \rho$ by Theorem~\ref{thm: Hartman}.  Since $\rho$ is generically finite, for a general point $y\in Y$, the inverse image $\rho^{-1}(y)$ is a finite set. It follows that $\sigma_t(x) =x$ for each $t\in[0,1], x\in \rho^{-1}(y)$. In other words $\sigma_t=\id_X$ for $t\in[0,1]$. In particular, 
  $\sigma = \sigma_1 = \id_X$. 
\end{proof}

\section{Unbounded $[\Aut_\QQ(X):\Aut_\ZZ(X)]$}
In this section, we construct a series of examples where $[\Aut_\QQ(X):\Aut_\ZZ(X)]$ is unbounded.

Recall that surfaces with  
 Kodaira dimension
 $\kappa(X)=1$ 
 are canonically elliptic, there is a fibration $f : X \ra B$   over a curve $B$ 
and with general fibre a smooth elliptic curve, such that $\Aut(X)$ acts equivariantly on $X, B$. 

\begin{theo}\label{Kod1min}
 For each positive integer $n$ there exists a  minimal surface $X$
 of Kodaira dimension $1$ such that 
$
[\Aut_\QQ(X):\Aut_\ZZ(X)] \geq n.$
\end{theo}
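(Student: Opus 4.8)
The plan is to produce a minimal elliptic surface $f\colon X\to B$ of Kodaira dimension $1$ on which $\Aut_\QQ(X)$ contains a large group coming from the elliptic fibration, while $\Aut_\ZZ(X)$ is forced to be much smaller because the finer integral cohomology constraint (via Principle~\ref{prin: multiplefibres}) obstructs most of these automorphisms from acting trivially. The most natural source of automorphisms acting trivially on rational cohomology but permuting configurations detectable only integrally is a fibration with several multiple fibres. Concretely, I would take $B=\PP^1$ (or an elliptic curve) and a fibration with many multiple fibres of mutually prime or carefully chosen multiplicities, built for instance by performing logarithmic transformations on a product-type elliptic surface, or by realizing $X$ as an isotrivial fibration with a controlled finite group action.

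The key structural input is that automorphisms preserving the elliptic fibration and acting by translation on a fixed smooth fibre give a positive-dimensional or large finite supply of elements that are automatically in $\Aut_\QQ(X)$: translations in the fibre direction act trivially on the rational cohomology once $\Aut_0(X)$ and the fibrewise translation structure are understood. First I would set up the fibration so that $\Aut_\QQ(X)$ contains a cyclic (or larger abelian) group $G$ of order at least $n$ whose elements permute the multiple fibres nontrivially at the integral level. The second, crucial step is to invoke Principle~\ref{prin: multiplefibres}: an element of $\Aut_\ZZ(X)$ must fix each multiple fibre $F''=mF'$ unless the exceptional numerical configuration $(2,2,m_3,\dots)$ with odd $m_i$ occurs. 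Choosing the multiplicities of the multiple fibres so as to \emph{avoid} this exceptional case (e.g.\ taking at least three multiple fibres of the same multiplicity $>2$, or multiplicities that are not of the forbidden shape), I would ensure that any $\sigma\in\Aut_\ZZ(X)$ must individually fix every multiple fibre. This pins down $\Aut_\ZZ(X)$ to a small subgroup, while the full $G\subset\Aut_\QQ(X)$ genuinely permutes the multiple fibres, yielding the index bound $[\Aut_\QQ(X):\Aut_\ZZ(X)]\geq n$.

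The verification splits into two computations. On the rational side, I must check that all $n$ elements of $G$ really act trivially on $H^*(X,\QQ)$; this reduces, via the decomposition of $H^*(X,\QQ)$ adapted to the fibration and the fact that fibrewise translations act trivially on $H^*$ of a genus-$1$ fibre, to confirming triviality on $H^1(B,\QQ)$ and on the span of the fibre and section classes, which the construction guarantees. On the integral side, I must confirm via the orbifold-fundamental-group computation in the proof of Principle~\ref{prin: multiplefibres} that a nontrivial element of $G$ which cyclically permutes multiple fibres does \emph{not} act trivially on $H_1(X,\ZZ)$, so it is excluded from $\Aut_\ZZ(X)$.

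The main obstacle I expect is the rational-triviality check: it is easy to build automorphisms that permute multiple fibres, but arranging that they act trivially on \emph{all} of $H^*(X,\QQ)$ — not merely on $H^2$ or on the part coming from the base — is delicate, since an automorphism permuting the multiple fibres might also act nontrivially on the transcendental or on the fibre-component part of the cohomology. The construction must therefore be rigid enough that the permuting automorphisms are, at the rational level, indistinguishable from fibre translations. I anticipate that using an isotrivial (product-quotient or SIP-type) model, where Principle~\ref{SIPU} already identifies $\Aut_\QQ(X)$ with an explicit group of pairs acting on $H^*(C_1\times C_2,\QQ)^{\Delta_G}$, will make this check tractable and simultaneously make the integral obstruction transparent through the multiplicities of the multiple fibres of the induced fibration.
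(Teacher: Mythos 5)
Your proposal takes essentially the same route as the paper's proof: the paper instantiates exactly the isotrivial SIP model you converge on at the end, namely $X=(B\times F)/\Delta_G$ with $B\colon y^2=x^n-1$, $G=\ZZ/2\ZZ$ acting hyperelliptically on $B$ and by a $2$-torsion translation on the elliptic curve $F$, so that the induced fibration over $B/G\cong\PP^1$ has $n$ double fibres cyclically permuted by the order-$n$ automorphism induced by $(x,y)\mapsto(\xi x,y)$, which lies in $\Aut_\QQ(X)$ by Principle~\ref{SIPU} (equivalently by the invariant-cohomology description) but meets $\Aut_\ZZ(X)$ trivially by Principle~\ref{prin: multiplefibres}. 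Your two verification steps (rational triviality via the SIP structure, integral faithfulness via the orbifold fundamental group behind Principle~\ref{prin: multiplefibres}) are precisely the two steps of the paper's argument.
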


\begin{proof}
Let $B$ be the hyperelliptic curve, compactification of the affine curve of  equation:
\[
y^2 = x^n-1
\]
where $n = 2g(B) +2 \geq 6$ is an even integer. 
Let $F$ be an elliptic curve. The group $G=\langle\tau\rangle\cong\ZZ/2\ZZ$
acts on $B$ by
\[
\tau (x,y) =  (x,-y)
\]
and we make it act on $F$ by translations
\[
\tau(z) = z + \epsilon,\,\,\,\forall\, z
\in F
\]
where $\epsilon$ is  a torsion point of order precisely $2$.
 Consider the surface 
isogenous to a product $X = (B\times F)/\Delta_G$, where $\Delta_G$ is the diagonal of $G\times G$ acting naturally on $B\times F$.

Since the action is free, 
the invariants of $X$ are as follows:
\[
\kappa(X)=1,\,\chi_\topo(X)=\chi(\mo_X) = p_g(X)=0,\, q(X)=1,\, b_2(X) = \rho(X) =2
\]

The rational cohomology groups of $X$ are as follows:
\begin{equation}\label{eq: cohom}
 \small H^1(X,\QQ) = q^* H^1(F/G,\QQ), \, H^2(X,\QQ) = p^*H^2(B/G,\QQ)\oplus  q^*H^2(F/G, \QQ) 
\end{equation}
where $p\colon X\rightarrow B/G$ and $q\colon X\rightarrow F/G$ are the induced fibrations. 
 
Consider the following action of $\langle \tilde \sigma \rangle\cong \ZZ/n\ZZ$ on $B$:
\[
(x,y)\mapsto (\xi x, y),
\]
where $\xi$ is a primitive $n$-th root of unity.
Then $\tilde\sigma\times \id_F\in \Aut(B\times F)$ commutes with $\tau$, and hence it descends to an automorphism $\sigma\in \Aut(X)$, of order $n$. One sees immediately from Principle~\ref{SIPU}, or directly from  \eqref{eq: cohom} that $\sigma$ acts trivially on $H^*(X,\QQ)$, that is, $\sigma\in\Aut_\QQ(X)$. On the other hand, $\sigma$ permutes the $n$ double fibres of $p\colon X\rightarrow B/G$; see Figure~\ref{fig: perm}. Since  $n\geq 3$,  $\langle\sigma\rangle$ acts faithfully on $H^*(X,\ZZ)$ in view of Principle~\ref{prin: multiplefibres}.

\begin{figure}
\begin{tikzpicture}
\draw(-5.5,-2) -- (5.5,-2) node[right]{$\PP^1$};
\foreach \x in {-4, -2, 0, 2, 4}
\node at (\x, -2) [rbullet]{};
\node at (-4, -2) [label=below: $\xi$]{};
\node at (-2, -2) [label=below: $\xi^2$]{};
\node at (0, -2) [label=below: $\xi^3$]{};
\node at (2, -2.1) [label=below: $\cdots$]{};
\node at (4, -2) [label=below: $\xi^{n}$]{};
\foreach \x in {-4, -2, 0, 2, 4}
\draw[line width = 2pt, color=red] (\x, 0) -- (\x,3);
\node at (-4, 3) [red, label=above: $2F_1$]{};
\node at (-2, 3) [color=red, label=above: $2F_2$]{};
\node at (0, 3) [color=red, label=above: $2F_3$]{};
\node at (2, 3) [color=red, label=above: $\cdots$]{};
\node at (4, 3) [color=red, label=above: $2F_n$]{};
\foreach \x in {-5, -3, -1, 1, 3, 5}
\draw (\x, 0) -- (\x,3);
\node at (5.7, 0) {$X$};
\foreach \x in {-4, -2, 0, 2}
\draw[green, ->] (\x+.2, -.1) ..controls (\x+1, -.5) .. (\x+1.8, -.1)node[midway,below]{$\sigma$};
\draw[green, ->] (3.8, 3.5) ..controls (0, 4) .. (-3.8, 3.5)node[midway,above]{$\sigma$};
\end{tikzpicture}
\caption{ Cyclic permutation of double fibres}\label{fig: perm}
\end{figure}
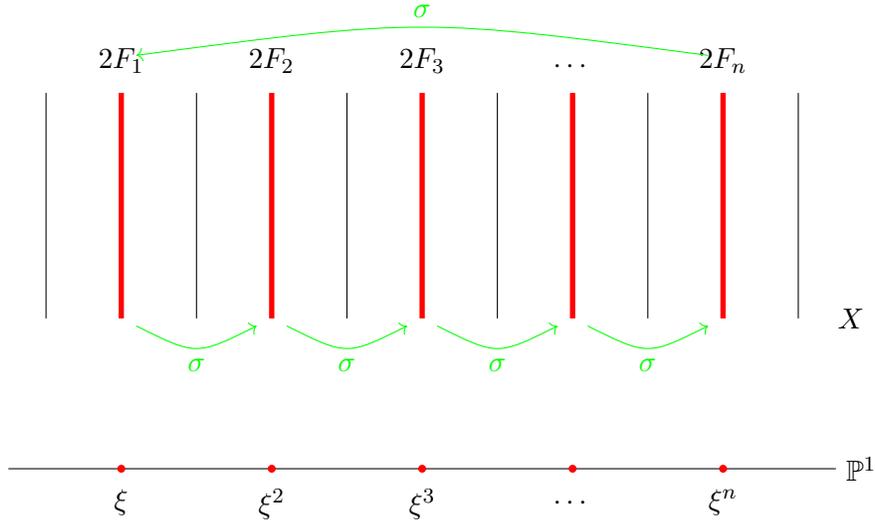

 It follows that
\[
[\Aut_\QQ(X):\Aut_\ZZ(X)] \geq 
 |\sigma|
 =n.
\]
Thus $[\Aut_\QQ(X):\Aut_\ZZ(X)]$ is not bounded, as $n$ goes to infinity.
\end{proof}

\section{Unbounded $[\Aut_\ZZ(X):\Aut_*(X)]$}
In this section, we construct a series of examples where $[\Aut_\ZZ(X):\Aut_*(X)]$
 is not bounded.

\begin{theo}\label{Kod1nonmin}
 For each positive integer $n$ there exists a  (non minimal) surface 
 $X$
 of Kodaira dimension $1$ such that 
 $\Aut_*(X)=\{\id_{X}\}$, and  
$$
\Aut_\ZZ(X)\cong \ZZ/n\ZZ.$$
\end{theo}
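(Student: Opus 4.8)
The plan is to build on the minimal example from Theorem~\ref{Kod1min} by performing an equivariant blow-up that kills the $C^\infty$-isotopically trivial automorphisms while retaining an integrally cohomologically trivial $\ZZ/n\ZZ$-action. First I would start again from $X_0 = (B\times F)/\Delta_G$ with $B$ the hyperelliptic curve $y^2 = x^n-1$ and $F$ an elliptic curve, equipped with the order-$n$ automorphism $\sigma$ coming from $(x,y)\mapsto(\xi x, y)$. As shown before, $\sigma$ lies in $\Aut_\QQ(X_0)$ but acts nontrivially on $H^*(X_0,\ZZ)$ because it cyclically permutes the $n$ double fibres of $p\colon X_0\to B/G\cong\PP^1$. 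The key idea is to blow up a $\sigma$-orbit of points so that, by Principle~\ref{prin: descend} (the integral version \eqref{eq: descend Z}), the resulting surface $X$ satisfies $\Aut_\ZZ(X)\cong\langle\sigma\rangle\cong\ZZ/n\ZZ$: I would choose a $\sigma$-invariant finite set $S\subset X_0$ (for instance one full $\sigma$-orbit of size $n$ lying away from the double fibres and with nontrivial stabiliser trivial, so the blow-up stays a legitimate sequence of $(-1)$-blow-ups) such that $\sigma$ permutes the exceptional curves $E_1,\dots,E_n$ cyclically. Since $\sigma$ now permutes the negative curves $E_i$ nontrivially but acts trivially on $H^2$ modulo these classes, the integral action becomes \emph{trivial} — the permutation of the $E_i$ is recorded in $H^2(X,\ZZ)$ only if the $E_i$ are distinguishable cohomologically, so the orbit must be chosen so that the induced permutation on $H^*(X,\ZZ)$ is trivial while still forbidding larger groups; this is precisely the delicate point.

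The mechanism I expect to work is the following. On $X_0$ the obstruction to $\sigma\in\Aut_\ZZ$ came from the double fibres; I want to \emph{repair} that by blowing up so that $\sigma$ acts trivially on all of $H^2(X,\ZZ)$, yet simultaneously \emph{destroy} any positive-dimensional part of $\Aut_*$. Concretely, the blow-up of an $n$-orbit introduces $n$ exceptional $(-1)$-curves whose classes $[E_1],\dots,[E_n]$ are permuted cyclically by $\sigma$; to get $\sigma\in\Aut_\ZZ(X)$ I instead want to blow up the $n$ double-fibre configurations in a way that makes the integral cohomology action trivial. The cleanest route is to blow up a point on each of the $n$ double fibres lying in a single $\sigma$-orbit: then $\sigma$ permutes these $n$ points, but because the double-fibre classes were exactly what distinguished them integrally, after blowing up the new integral cohomology is generated in a way on which $\sigma$ acts trivially. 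I would verify $\sigma\in\Aut_\ZZ(X)$ directly from \eqref{eq: descend Z}, checking at each stage $X_k$ that $\sigma_k\in\Aut_\ZZ(X_k)$ and fixes the blown-up point, which holds since the blown-up points form $\sigma$-orbits and the induced permutation matrix on the exceptional classes, combined with the permutation of double-fibre classes, becomes the identity in integral cohomology.

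For the reverse inclusion $\Aut_\ZZ(X)\subseteq\langle\sigma\rangle$ I would again invoke Principle~\ref{prin: descend}: any $\tau\in\Aut_\ZZ(X)$ descends to $\tau_0\in\Aut_\ZZ(X_0)$ fixing the blown-up points, and by the structure of $X_0$ as a surface isogenous to a product (Principle~\ref{SIPU}) together with the constraint of fixing a prescribed $\sigma$-orbit, one pins $\tau_0$ down to a power of $\sigma$. Here the computation of $\Aut_\QQ(X_0)$ and $\Aut_\ZZ(X_0)$ via the normalizer $N_{\Delta_G}/\Delta_G$ does the bookkeeping, and the requirement that $\tau$ fix the chosen points forces $\tau\in\langle\sigma\rangle$. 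The remaining and most important assertion, $\Aut_*(X)=\{\id_X\}$, I would obtain from the rigidity Principle~\ref{prin: rigidity}: I must exhibit a generically finite proper holomorphic map $\rho\colon X\to Y$ onto a compact K\"ahler manifold $Y$ of nonpositive sectional curvature, and check $\chi_\topo(X)\neq 0$. Since blowing up $n$ points changes the Euler number from $\chi_\topo(X_0)=0$ to $\chi_\topo(X)=n>0$, the Euler-number hypothesis is satisfied precisely because of the blow-up — this is the structural reason the non-minimal example works where the minimal one cannot. For the target $Y$ I would use the product $B\times F$ (or an abelian/locally symmetric quotient receiving $X_0$), composing the blow-down $X\to X_0$ with the natural generically finite map to a target of nonpositive curvature; the factor $F$ and the hyperelliptic base contribute the needed curvature sign.

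The main obstacle I anticipate is engineering the blow-up so that both the integral-triviality of $\sigma$ and the rigidity hypothesis hold simultaneously: one must choose the $\sigma$-orbit of blown-up points so delicately that the permutation of exceptional classes exactly cancels the original permutation of double-fibre classes in $H^2(X,\ZZ)$ (giving $\sigma\in\Aut_\ZZ$), while the map $\rho$ to a nonpositively curved target remains generically finite and proper after the blow-up. Verifying $\Aut_*(X)=\{\id_X\}$ reduces cleanly to Principle~\ref{prin: rigidity} once $\chi_\topo(X)=n\neq 0$ and a suitable $\rho$ are in place, so the genuinely technical work is the integral cohomology computation for $\sigma$ and the explicit identification of $\Aut_\ZZ(X)$ with $\ZZ/n\ZZ$ via the descend principle.
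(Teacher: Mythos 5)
Your overall architecture (blow up to make $\chi_\topo>0$, then kill $\Aut_*$ via Principle~\ref{prin: rigidity}) matches the paper's, and that part of your argument is sound — except that the nonpositively curved target should be the quotient surface itself (which carries a locally product hyperbolic$\times$flat metric), not $B\times F$, since $X_0$ maps \emph{from} $B\times F$, not to it. But the heart of your proposal — producing $\Aut_\ZZ(X)\cong\ZZ/n\ZZ$ by blowing up a $\sigma$-orbit so that ``the permutation of exceptional classes exactly cancels the original permutation of double-fibre classes'' — cannot work, for three compounding reasons. First, a point blow-up does not change $H_1$, and the obstruction to $\sigma\in\Aut_\ZZ(X_0)$ in the Theorem~\ref{Kod1min} example lives in the torsion of $H_1(X_0,\ZZ)$ (that is the content of Principle~\ref{prin: multiplefibres}); this nontrivial action persists on every blow-up, since $f^*H^*(X_0,\ZZ)$ sits inside $H^*(X,\ZZ)$ as an equivariant direct summand, so nothing supported on exceptional classes can ``cancel'' it. Second, an automorphism that permutes the exceptional curves nontrivially permutes their classes $[E_1],\dots,[E_n]$, which are distinct, linearly independent elements of $H^2(X,\ZZ)$; such an automorphism is therefore \emph{never} integrally trivial — the $E_i$ are always cohomologically distinguishable, so the ``delicate point'' you flag is in fact an impossibility. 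Third, Principle~\ref{prin: descend} requires each blown-up point to be fixed individually by the descended automorphism, and your $\sigma$ fixes none of them.

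The paper's construction goes in the opposite direction: rather than trying to promote a $\QQ$-trivial, $\ZZ$-nontrivial automorphism to a $\ZZ$-trivial one, it arranges for the cyclic group to sit inside the \emph{identity component}. Take $Y=(C\times E)/\Delta_G$ with $g(C)\geq 2$, $G=\langle\sigma\rangle\cong\ZZ/n\ZZ$ acting on $C$ with a fixed point and with $g(C/G)\geq 1$, and acting on $E$ by translations of exact order $n$. By Principle~\ref{SIPU}(III), $\Aut_\ZZ(Y)=\Aut_0(Y)\cong E$, the translations. Now blow up a single point $P$ on the multiple fibre $F_{t_0}$ of $Y\to C/G$ over a point $t_0$ with $\Stab_G(t_0)=G$. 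By Principle~\ref{prin: descend},
\[
\Aut_\ZZ(X)=\{\gamma\in\Aut_\ZZ(Y)\mid \gamma(P)=P\}\cong \Stab_G(t_0)\cong\ZZ/n\ZZ,
\]
i.e.\ the finite group arises as the stabilizer of $P$ inside the connected group $E$ of translations (which are automatically integrally trivial), cut down by the blown-up point — no cancellation in cohomology is needed or possible. With this corrected mechanism, your rigidity step then finishes the proof exactly as in the paper.
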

\begin{proof}

Let $C$ and $E$ be two smooth projective curves with $g(C)\geq 2$ and  $g(E)= 1$. Suppose that $G=\langle\sigma\rangle\cong\ZZ/n\ZZ$ acts faithfully on $C$ and $E$ in such a way  that
\begin{itemize}[leftmargin=*]
    \item $C^\sigma \neq \emptyset$ and $g(C/G)\geq 1$;
    \item $\sigma$ acts on $E$ by translations, that is, $\sigma(y)=y+a$ for some torsion element $a\in E$ of order  exactly $n$.
\end{itemize}
The diagonal $\Delta_G<G\times G$ acts freely on $C\times E$,  so we take the 
 SIP
 of unmixed type 
 $
 Y
 :=(C\times E)/\Delta_G$. 

 We have shown in (III) of Principle~\ref{SIPU} that $\Aut_\ZZ(Y) = \Aut_0(Y)$, and it consists of automorphisms that lift to an automorphism $\tilde \gamma$ of $C\times E$ of the form $\tilde\gamma(x,y)=(x, y+a)$ for some $a\in E$.
 
Now let $t\in C/G$ and  
 $X_t=\Bl_P(Y)$
 be the blow-up of a point $P\in F_t$,
  where $F_t$ denotes the fibre of the induced fibration $Y\rightarrow C/G$ over $t$. Then by 
Principle~\ref{prin: descend}
 we have
\[
\Aut_\ZZ(X_t) 
= \{\gamma\in
 \Aut_\ZZ(Y)
\mid \gamma(P)=P\}  \cong \Stab_G(t).
\]
Note that there exists a point $t_0$ with $
\Stab_G(t_0)
=G=\langle\sigma\rangle$ by the assumption that $C^\sigma\neq \emptyset$. 

 The situation is illustrated in Figure~\ref{fig: blow}, where $E$ denotes the exceptional divisor of the blow-up at a point of the fibre $F_{t_0}$:
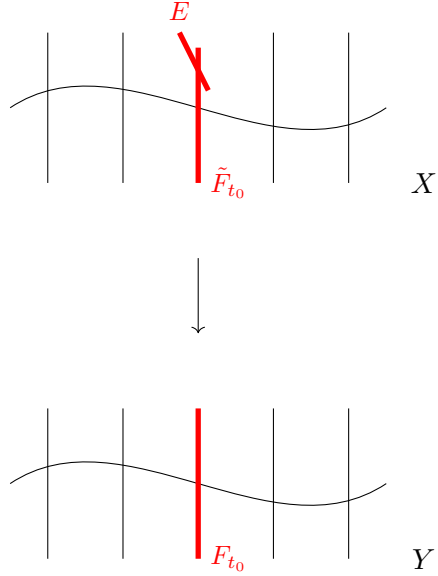
\begin{figure}
\begin{tikzpicture}
\begin{scope}[yshift=3cm]
\draw(-.5,0) .. controls (1,1) and (3,-1)  .. (4.5,0);
\foreach \x in {0,1,3,4}
\draw (\x, -1) -- (\x,1);
\draw[shorten >=-.3cm, line width = 2pt, color=red, ] (1.75, 1)node[above]{\small $E$ } -- (2,.5);
\draw[shorten >=-.3cm, line width = 2pt, color=red, ] (2,-1)node[right]{\small $\tilde F_{t_0}$ } -- (2,.5);
\node at (5, -1) {$X$};
\end{scope}
\draw[->] (2, 1) -- (2, 0);
\begin{scope}[yshift=-2cm]
\draw(-.5,0) .. controls (1,1) and (3,-1)  .. (4.5,0);
\foreach \x in {0,1,3,4}
\draw (\x, -1) -- (\x,1);
\draw[line width = 2pt, color=red] (2, -1) node[right]{\small $F_{t_0}$}-- (2,1);
\node at (5, -1) {$Y$};
\end{scope}
\end{tikzpicture}
\caption{ Blow-up of a point on a multiple fibre}\label{fig: blow}
\end{figure}
Letting $n=|G|$ go to infinity, we see that 
$|\Aut_\ZZ(X_{t_0})|$
is unbounded. 

 Now the proof is completed by applying Principle~\ref{prin: rigidity} that the group 
$\Aut_*(X_{t})$
is trivial for any $t$.
\end{proof}

\begin{cor}
Let  $X$
 be as in Theorem~\ref{Kod1nonmin}. Then $\Aut_*(X)=\{\id_X\}$.

As a consequence
\[
 [\Aut_\ZZ(X):\Aut_*(X)] = |\Aut_\ZZ(X)| 
\]
 is unbounded.
\end{cor}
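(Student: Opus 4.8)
The plan is to observe that the substantive content, namely $\Aut_*(X)=\{\id_X\}$, is exactly what Principle~\ref{prin: rigidity} delivers once the required generically finite map is produced, so that the stated index equality is then purely formal. I would therefore take $X=X_{t_0}$ as in the proof of Theorem~\ref{Kod1nonmin} and verify the two hypotheses of Principle~\ref{prin: rigidity}. For the Euler number I would compute $\chi_\topo(Y)=\chi_\topo(C)\,\chi_\topo(E)/|G|=0$, using $\chi_\topo(E)=0$ for the elliptic curve $E$; since blowing up one point raises the topological Euler number by $1$, we get $\chi_\topo(X)=\chi_\topo(Y)+1=1\neq 0$. It is precisely this computation that forces the passage to the \emph{non-minimal} surface: the minimal model $Y$ has $\chi_\topo(Y)=0$, so Principle~\ref{prin: rigidity} cannot be applied to $Y$ directly.

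For the second hypothesis I would let $\rho\colon X\to Y$ be the blow-down, which is proper, holomorphic and generically finite (indeed birational), and then equip $Y$ with a nonpositively curved K\"ahler metric. To this end I would put the hyperbolic metric on $C$ (of negative curvature, and automatically $G$-invariant since the automorphisms of a curve of genus $\geq 2$ are isometries of the Poincar\'e metric) and a flat metric on $E$ (invariant because $G$ acts by translations); the resulting product metric on $C\times E$ is K\"ahler of nonpositive sectional curvature, and since $\Delta_G$ acts freely and by isometries it descends to a nonpositively curved K\"ahler metric on $Y=(C\times E)/\Delta_G$. With both hypotheses in place, Principle~\ref{prin: rigidity} yields $\Aut_\sharp(X)=\{\id_X\}$, and since $\Aut_*(X)\vartriangleleft\Aut_\sharp(X)$ in the basic ladder, we obtain $\Aut_*(X)=\{\id_X\}$.

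The consequence is then immediate: as $\Aut_*(X)$ is trivial, $[\Aut_\ZZ(X):\Aut_*(X)]=|\Aut_\ZZ(X)|$, and by Theorem~\ref{Kod1nonmin} we may arrange $\Aut_\ZZ(X)\cong\ZZ/n\ZZ$ for every positive integer $n$, so this index equals $n$ and is unbounded as $n\to\infty$. The only genuinely delicate point is the construction of the nonpositively curved K\"ahler metric on $Y$ together with the verification that $\chi_\topo(X)\neq 0$; everything else is bookkeeping within the subgroup ladder. I expect the main obstacle to be checking that the product metric descends to the free quotient and retains nonpositive sectional curvature as a \emph{K\"ahler} metric, so that Theorem~\ref{thm: Hartman}, on which Principle~\ref{prin: rigidity} rests, may legitimately be invoked.
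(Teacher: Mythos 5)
Your proposal is correct and takes essentially the same approach as the paper: the paper's proof of Theorem~\ref{Kod1nonmin} concludes precisely by invoking Principle~\ref{prin: rigidity} for the blow-down $X_t\to Y$, which is what you do. You in fact supply more detail than the paper (the Euler-number computation $\chi_\topo(X)=1$ and the verification that the hyperbolic-times-flat product metric descends to the free quotient $Y$), and those verifications are all correct.
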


\section{Unbounded $[\Aut_*(X):\Aut_0(X)]$}
In this section, we give examples of smooth projective surfaces admitting groups of $C^\infty$-isotopically trivial automorphisms
with  number of connected components which is not bounded. A classification of these  would be  desirable.

\begin{theo}\label{rational}
For each positive integer $m$ there exists a rational surface $X$, blow up of $\PP^2$, such that $\Aut_{\QQ}(X) = \Aut_*(X) \cong \ZZ/ m\ZZ$.
\end{theo}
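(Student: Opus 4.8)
The plan is to realize $\ZZ/m\ZZ$ as a cyclic subgroup $\mu_m\subset\mathrm{PGL}(3,\CC)=\Aut(\PP^2)$ and to blow up a carefully chosen $\mu_m$-fixed configuration so that the cohomologically trivial automorphisms are forced to be exactly $\mu_m$, while still being $C^\infty$-isotopic to the identity. Concretely, let $\zeta=e^{2\pi i/m}$ and take $\mu_m=\langle\sigma\rangle$ with $\sigma=\mathrm{diag}(1,1,\zeta)$, sitting inside the one-parameter subgroup $\CC^*=\{\mathrm{diag}(1,1,t)\mid t\in\CC^*\}$; its fixed locus on $\PP^2$ is the line $\ell=\{x_2=0\}$ together with the point $p=[0:0:1]$. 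I would build $X$ by blowing up, in this order: (i) at least three general points of $\ell$ (including a chosen $Q\in\ell$); (ii) the point $p$; (iii) a chain of $m$ infinitely near points over $Q$, obtained at each stage by blowing up the torus-fixed point lying in the direction transverse to $\ell$, so that $\CC^*$ acts on the successive exceptional curves with weights $1,2,\dots,m$; and (iv) a general point $R$ on the last exceptional curve $E^*$, on which $\CC^*$ acts with weight $m$. All centres in (i)--(iii) are $\CC^*$-fixed, hence $\mu_m$-fixed; and since $\CC^*$ acts on $E^*$ with weight $m\equiv 0\pmod m$, the group $\mu_m$ acts trivially on $E^*$ and therefore fixes $R$ as well. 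Thus $\mu_m$ lifts to automorphisms of $X$ fixing every centre of blow-up, and Principle~\ref{prin: descend} gives $\mu_m\subseteq\Aut_\QQ(X)$.

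For the reverse inclusion $\Aut_\QQ(X)\subseteq\mu_m$ I would run Principle~\ref{prin: descend} in the other direction. Any $g\in\Aut_\QQ(X)$ descends to $\bar g\in\Aut_\QQ(\PP^2)=\mathrm{PGL}(3,\CC)$ fixing each centre. Since $\ell$ carries at least three blown-up points, its strict transform $\tilde\ell$ has negative self-intersection, so $\bar g$ preserves $\ell$ by Principle~\ref{prin: negative} and, fixing three of its points, fixes $\ell$ pointwise; hence $\bar g$ lies in the group $G_\ell$ of transformations fixing $\ell$ pointwise. Imposing $\bar g(p)=p$ cuts $G_\ell$ down to the scaling subgroup $\CC^*_{\mathrm{res}}=\{\mathrm{diag}(1,1,\nu)\}$. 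The points of the chain (iii) are $\CC^*_{\mathrm{res}}$-fixed and impose no further condition, but fixing the general point $R\in E^*$, where $\CC^*_{\mathrm{res}}$ acts with weight $m$, forces $\nu^m=1$, i.e. $\bar g\in\mu_m$. In particular $\Aut_\QQ(X)$ is finite, so $\Aut_0(X)=\{\id_X\}$ and $\Aut_\QQ(X)=\mu_m\cong\ZZ/m\ZZ$. The main obstacle in this step is producing, via the toric chain (iii), an exceptional curve on which the residual scaling acts with weight divisible by $m$: this is exactly what traps the otherwise positive-dimensional group $\CC^*_{\mathrm{res}}$, and it is needed because reducing along the fixed line always leaves a scaling whose transverse weight is $1$, so no single blow-up of a $\mu_m$-fixed point can separate $\mu_m$ from $\CC^*_{\mathrm{res}}$ directly.

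Finally I would prove $\mu_m\subseteq\Aut_*(X)$; together with the general inclusion $\Aut_*(X)\subseteq\Aut_\QQ(X)$ this yields $\Aut_*(X)=\Aut_\QQ(X)\cong\ZZ/m\ZZ$. The key observation is that the only centre of blow-up not fixed by all of $\CC^*$ is the last one, $R$. Writing $X'$ for the surface obtained by performing all blow-ups except that at $R$, the group $\CC^*$ acts biholomorphically on $X'$, with $E^*\subset X'$ a $\CC^*$-invariant rational curve on which $\CC^*$ acts with weight $m$, and $X=\Bl_R(X')$. The path $g_s=\mathrm{diag}(1,1,e^{2\pi i s/m})$, $s\in[0,1]$, is an isotopy through automorphisms of $X'$ from $\id_{X'}$ to $\sigma$; it moves $R$ along the loop $s\mapsto g_s(R)$, which traverses a latitude circle of $E^*\cong\PP^1\cong S^2$ exactly once and is therefore contractible. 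Correcting $g_s$ by a family of diffeomorphisms supported near $E^*$ that undoes this rotation, one obtains an isotopy of $X'$ from $\id_{X'}$ to $\sigma$ fixing $R$ together with its $1$-jet throughout; such an isotopy lifts to $X=\Bl_R(X')$ and connects $\id_X$ to $\tilde\sigma$, so $\tilde\sigma\in\Diff_0(X)$ and $\sigma\in\Aut_*(X)$. The delicate point here is the compatibility of framings along the contraction, which is the technical crux of the isotopy argument; it is guaranteed by the simple connectivity of $E^*$ and the connectedness of the relevant structure group.
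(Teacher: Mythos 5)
Your configuration and your computation of $\Aut_\QQ(X)$ are correct and essentially identical to the paper's: three blown-up points on a pointwise-fixed line plus one transverse point cut $\PP\GL(3,\CC)$ down to the scalings $\mathrm{diag}(1,1,a)$, the transverse chain of infinitely near points makes this $\CC^*$ act on the last exceptional curve $E^*$ with weight $m$, and blowing up a general $R\in E^*$ traps exactly $\mu_m$; this is Principle~\ref{prin: descend} run in both directions, exactly as in the paper. The genuine gap is in your final step, $\mu_m\subseteq\Aut_*(X)$. You assert that an isotopy of $X'$ fixing $R$ together with its $1$-jet lifts to a smooth isotopy of $X=\Bl_R(X')$. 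This is false in the $C^\infty$ category: fixing the $1$-jet does not make the induced map on the complex blow-up differentiable along the exceptional curve. In local holomorphic coordinates at $R$, the isotopy $h_t(x_1,x_2)=(x_1+t\,\bar{x}_2^{\,2},\,x_2)$ fixes the origin with identity differential for every $t$, yet in the blow-up chart $(u,w)\mapsto(u,uw)$ the lifted second coordinate is $w/\bigl(1+t\,(\bar{u}^{2}/u)\,\bar{w}^{2}\bigr)$, and $\bar{u}^{2}/u$ (equal to $re^{-3i\theta}$ for $u=re^{i\theta}$) is continuous but not differentiable at $u=0$; so for $t\neq 0$ the lift is not even $C^1$, and since any lift must agree off the exceptional curve with the conjugate of $h_t$ under the blow-down, this cannot be repaired. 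What one actually needs is an isotopy that is \emph{holomorphic} in a fixed neighbourhood of $R$ (then lifting is automatic), or a parametrized disc-theorem argument; neither is supplied by ``simple connectivity of $E^*$ and connectedness of the relevant structure group''. There is also a smaller unaddressed point: your correcting family must return to the identity at time $1$, or else the corrected isotopy ends at $\phi_1\circ\sigma$ rather than at $\sigma$; this is fixable, but what it uses is simple connectivity of $X'$ and the evaluation fibration $\Diff_0(X')\to X'$, not contractibility of the latitude circle inside $E^*$.

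For comparison, the paper sidesteps lifting altogether: it varies the last centre $P$ over $E^*\cong\PP^1$, obtaining a holomorphic family $\mx\to E^*$ of blow-ups on which $\mu_m$ acts fibrewise (possible precisely because $\mu_m$ acts trivially on $E^*$), with all fibres diffeomorphic by Ehresmann's theorem; over the two $\CC^*$-fixed points of $E^*$ the fibre's group of cohomologically trivial automorphisms is the full connected group $\CC^*$, so transporting $\sigma$ along a differentiable trivialization of the family places it inside a connected group of diffeomorphisms, giving $\sigma\in\Aut_*(X)$. (The same mechanism is written out in general form in the proof of Theorem~\ref{thm: rational}.) If you want to salvage your direct approach, build the correcting isotopy from flows of vector fields that are holomorphic near the moving point, and contract the residual discrepancy at time $1$ through maps that are holomorphic near $R$ (linearize by the holomorphic Alexander trick and use connectedness of $\GL(2,\CC)$); only after such modifications does the isotopy genuinely lift to $\Bl_R(X')$.
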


A simple idea lies behind the construction: if we blow up a point $P$  in a complex manifold $Y$, the differentiable manifold we obtain
does not depend on the choice of the given point, as two simple arguments show: the first is that the diffeomorphism group 
 $\Diff(Y)$
acts transitively on the manifold $Y$, the second is  that,  varying the point $P$,
we get a family with base $Y$
of blow ups of $Y$, and they are all diffeomorphic by Ehresmann's theorem.

\begin{proof}

Let $P_1=(1:0:0),\, P_2=(0:1:0), \, P_3=(0:0:1)$ be the coordinate points of $\PP^2$, and let $P_4=(1:1:0)$. 
 Denote the lines connecting these points by $L_1=\overline{P_2P_3}$, $L_2=\overline{P_1P_3}$, $L_3=\overline{P_1P_2}$ and $L_4=\overline{P_3P_4}$.
 Let $\pi\colon X_4 \rightarrow \PP^2$ be the blow-up of the four points $P_i, 1\leq i\leq 4$; see Figure~\ref{fig: blow4}.
\begin{figure}
\begin{tikzpicture}[font=\tiny]
\begin{scope}[xshift=-3cm, scale=.8]
\draw (0,0) node (P41){};
\draw (0,1) node (P42){};
\draw (-1,1) node (P11){};
\draw (-1,2) node (P12){};
\draw (1,-1) node (P21){};
\draw (1,0) node (P22){};
\draw (2,.5) node (P31){};
\draw (2,1) node (P32){};
\draw (2,1.5) node (P33){};
\draw[shorten >=-.3cm, shorten <=-.3cm, red] (P11) --node[midway,right]{$E_1$} (P12);
\draw[shorten >=-.3cm, shorten <=-.3cm, red] (P21) --node[midway,right]{$E_2$} (P22);
\draw[shorten >=-.5cm, shorten <=-.5cm, red] (P31) -- (P33)node[above=.3cm]{$E_3$} ;
\draw[shorten >=-.3cm, shorten <=-.3cm, red] (P41) -- node[midway,right]{$E_4$} (P42);
\draw[shorten >=-.3cm, shorten <=-.3cm] (P11) -- node[midway,left]{$L_{3,4}$} (P21);
\draw[shorten >=-.3cm, shorten <=-.3cm] (P42) --node[midway, above=-.1cm]{$L_{4,4}$} (P32);
\draw[shorten >=-.3cm, shorten <=-.3cm] (P22) -- node[midway,above]{$L_{1,4}$} (P31);
\draw[shorten >=-.3cm, shorten <=-.3cm] (P12) --node[midway,above]{$L_{2, 4}$} (P33);
\node at (0.5, -1.5){\large  $X_4$};
\end{scope}
\draw[->] (-.5,.5) -- node[midway, below]{Blow up $\{P_i\}$}node[midway, above]{$\pi$}(1,.5);
\begin{scope}[xshift=4cm]
\draw (-1, 0) node(P1) {}node [left,align=center] {$ (1:0:0) P_1$};
\draw (1,  0) node (P2){}node [right, align=center] {$P_2 (0:1:0)$};
\draw (0, 1.5) node (P3){}node [right, align=center] {$P_3 (0:0:1)$};
\draw (0, 0) node[inner sep=0pt] (P4) {}node [below] {$P_4(1:1:0)$};
\draw[shorten <=-.3cm] (P1) --node[midway, above=-.1cm]{$L_{3}$} (P4);
\draw[ shorten >=-.3cm] (P4) -- (P2);
\draw[shorten >=-.3cm, shorten <=-.3cm] (P3) -- node[midway, right=-.1cm]{$L_{1}$}(P2);
\draw[shorten >=-.3cm, shorten <=-.3cm] (P3) -- node[midway, left=-.1cm]{$L_{2}$} (P1);
\draw[shorten >=-.1cm, shorten <=-.3cm] (P3)--node[midway, right=-.1cm]{$L_{4}$}(P4);
\foreach \x in {1,...,4}
\fill [red] (P\x) circle [radius=1pt];
\node at (0, -1){\large $\PP^2$} ;
\end{scope}
\end{tikzpicture}
\caption{Blow-up $X_4$  of  the plane: for each $1\leq i \leq 4$, $L_{i,4}$ on the left picture denotes the strict transform of $L_{i}$ on $X_4$ , and $E_i$ denotes the exceptional curve over $P_i$.}\label{fig: blow4}
\end{figure}
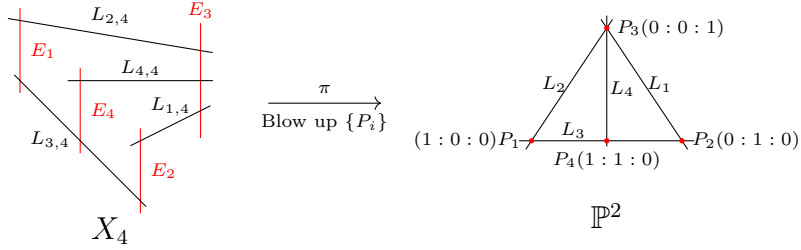

%\begin{center}
%\begin{tikzpicture}
%\draw (-1, 0) node (P1) {$\bullet$}node [above left] {$P_1$};
%\draw (0, 0) node (P4) {$\bullet$}node [below left] {$P_4$};
%\draw (1,  0) node (P2){$\bullet$}node [above right] {$P_2$};
%\draw (0, 1.5) node (P3){$\bullet$}node [left] {$P_3$};
%\draw (-1.5, 0) -- (1.5, 0);
%\draw (-1.2, -0.3) -- (0.2, 1.8);
%\draw (0,-.2)--(0,1.8);
%\draw (-.2,1.8)--(1.2,-.3);
%%\draw (0,2) -- (0,-.5);
%\end{tikzpicture}
%\end{center}

\begin{lem}
Let $G_4=\{\sigma\in \PP \GL(3) \mid \sigma(P_i)=P_i \text{ for } 1\leq i\leq 4\}.$ Then
\[
  \Aut_\QQ(X_4) \cong G_4
= \left\{
\begin{bmatrix}
1 & 0 & 0\\
0 & 1 & 0 \\
0 & 0 & a
\end{bmatrix}
 \biggm|a\in \CC^* = \CC-\{0\}
\right\}
\]
\end{lem}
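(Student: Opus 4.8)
The statement splits into two essentially independent parts: the abstract identification $\Aut_\QQ(X_4)\cong G_4$, and the explicit linear-algebraic description of $G_4$ as a one-parameter group. I would treat them in that order.

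The plan for the identification is to invoke Principle~\ref{prin: descend} with base $X_0=\PP^2$ and the length-four sequence of blow-downs contracting the exceptional curves $E_1,\dots,E_4$ over $P_1,\dots,P_4$. First one records that $\Aut_\QQ(\PP^2)=\PP\GL(3)$: since $H^*(\PP^2,\QQ)$ is generated by the hyperplane class, which every automorphism preserves (equivalently, $\Aut(\PP^2)=\PP\GL(3)$ is connected), the whole group acts trivially on cohomology. Principle~\ref{prin: descend} then realizes $\Aut_\QQ(X_4)$ as the set of $\sigma\in\PP\GL(3)$ whose successive lifts to the intermediate blow-ups are cohomologically trivial and fix the corresponding blown-up centers. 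The key observation is that, because $P_1,\dots,P_4$ are four \emph{distinct} points of $\PP^2$ (so no infinitely near points occur), this chain of conditions collapses to the single requirement $\sigma(P_i)=P_i$ for all $i$: granting this, at each stage the center of the next blow-up is an honest point fixed by $\sigma$, so the lift preserves the freshly created $(-1)$-class and, being already cohomologically trivial on the previous surface, stays cohomologically trivial. This yields $\Aut_\QQ(X_4)\cong G_4=\{\sigma\in\PP\GL(3)\mid \sigma(P_i)=P_i,\ 1\le i\le 4\}$.

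For the second part I would simply solve the linear algebra. Fixing $P_1=(1:0:0)$, $P_2=(0:1:0)$, $P_3=(0:0:1)$ forces a representing matrix to send each standard basis vector to a scalar multiple of itself, hence to be diagonal, say $\mathrm{diag}(\lambda_1,\lambda_2,\lambda_3)$. Imposing that it fix $P_4=(1:1:0)$ gives $(\lambda_1:\lambda_2:0)=(1:1:0)$, i.e.\ $\lambda_1=\lambda_2$. Normalizing the projective scalar so that $\lambda_1=\lambda_2=1$ leaves $\lambda_3=a\in\CC^*$ as the only free parameter, which is precisely the stated group, a one-dimensional torus isomorphic to $\CC^*$.

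The only point requiring genuine care—and hence the main, if modest, obstacle—is the propagation step: one must verify that fixing the four base points really forces cohomological triviality of \emph{every} intermediate lift, not merely on $\PP^2$ and on $X_4$. This is exactly where the distinctness of $P_1,\dots,P_4$ (ensuring the blow-up centers in the intermediate surfaces are honest points mapping isomorphically down to $P_2,P_3,P_4$) and Principle~\ref{prin: negative} (forcing each newly created negative curve to be preserved) are used; once these are in place, the equality in Principle~\ref{prin: descend} applies verbatim and the two parts combine to give the claim.
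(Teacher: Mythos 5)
Your proposal is correct and takes essentially the same route as the paper: the identification $\Aut_\QQ(X_4)\cong G_4$ via Principle~\ref{prin: descend} (which you justify in more detail than the paper does, correctly using that the four centers are distinct points of $\PP^2$), followed by elementary linear algebra. The only cosmetic difference is in that computation: the paper first uses that $G_4$ fixes the three collinear points $P_1,P_2,P_4$ on $L_3$, forcing the identity on that line, and then imposes fixing of $P_3$, whereas you first diagonalize using $P_1,P_2,P_3$ and then impose $P_4$ to get $\lambda_1=\lambda_2$; both yield $\mathrm{diag}(1,1,a)$, $a\in\CC^*$.
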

\begin{proof}
The isomorphism is by 
 Principle~\ref{prin: descend}.
Since  $G_4$ fixes three distinct points $P_1, P_2, P_4$ on the line $L_3=(x_3=0)$, it acts as the identity on $L_3$. It follows that any element $\sigma\in G_4$ takes the form
\[
\begin{bmatrix}
1 & 0 & *\\
0 & 1 & * \\
0 & 0 & * 
\end{bmatrix}
\]
where $*$ denotes entries to be determined. Since $\sigma$ fixes $P_3=(0:1:0)$, one sees then easily that 
\[
\sigma = \begin{bmatrix}
1 & 0 & 0\\
0 & 1 & 0 \\
0 & 0 & a
\end{bmatrix}
\]
for some $a\in 
 \CC^*
$.
\end{proof}
Blow up now  $P_5$, infinitely near $P_4$, in the direction of the line 
$
 L_{4}
= \overline{P_3P_4}=(x_0-x_1=0)$. Since $G_4$ fixes the point  $P_5$, 
 the action of
$G_4$ lifts to  $X_5=\Bl_{P_5}(X_4)$.  Continue to blow up inductively  $P_{n+1}\in E_n\cap 
 L_{4,n}
\subset X_{n}$ for $n\geq 4$,  where  
 $L_{4,n}$   is the strict transform of  
 $L_{4}$
on  $X_{n}$ so that we get a chain of surfaces
\[
 X_{n}\rightarrow\cdots \rightarrow X_6 \rightarrow X_{5} \rightarrow X_4 
\]
such that  $G_4=G_5=G_6=\cdots=G_n$ acts on them
 equivariantly. The situation is illustrated by Figure~\ref{fig: blown}, 
 %\ref{iterated-blow}: 
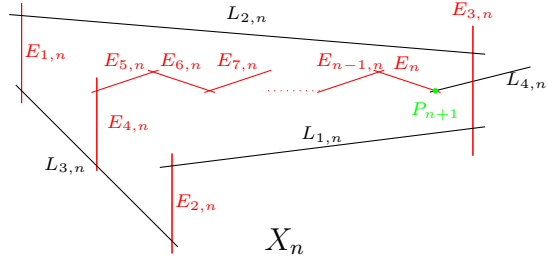
\begin{figure}\label{iterated-blow}
\begin{tikzpicture}[font=\tiny]
\begin{scope}
\draw (0,0) node (P41){};
\draw (0,1) node[inner sep=0] (P42){};
\draw (-1,1) node (P11){};
\draw (-1,2) node (P12){};
\draw (1,-1) node (P21){};
\draw (1,0) node (P22){};
\draw (5,.5) node (P31){};
\draw (5,1) node (P32){};
\draw (5,1.5) node (P33){};
\draw (P42)+(.75,.25) node[inner sep=0] (P5){};
\draw (P5)+(.75,-.25) node[inner sep=0](P6){};
\draw (P6)+(.75,.25) node[inner sep=0](P7){};
\draw (P7)+(0,-.25) node[inner sep=0](P8){};
\draw (P8)+(.75, 0) node[inner sep=0](P9){};
\draw (P9)+(.75, .25) node[inner sep=0](P10){};
\draw (P10)+(.75, -.25) node[inner sep=0, label=below:{\color{green} $P_{n+1}$}](P101){};
\draw (P101)+(1, .25) node[inner sep=0](P102){};
\draw[red, shorten >=-.1cm, shorten <=-.1cm] (P42)--node[midway, above]{$E_{5,n}$}(P5);
\draw[red, shorten >=-.1cm, shorten <=-.1cm] (P5)--node[midway, above]{$E_{6,n}$}(P6);
\draw[red, shorten >=-.1cm, shorten <=-.1cm] (P6)--node[midway, above]{$E_{7,n}$}(P7);
\draw[red, dotted] (P8)--(P9);
\draw[red, shorten >=-.1cm, shorten <=-.1cm](P9)--node[midway, above]{$E_{n-1, n}$}(P10);
\draw[red, shorten >=-.1cm, shorten <=-.1cm](P10)--node[midway, above]{$E_n$}(P101);
\draw[shorten >=-.3cm, shorten <=-.1cm](P101)--node[right=.3cm]{$L_{4,n}$} (P102);
\draw[shorten >=-.3cm, shorten <=-.3cm, red] (P11) --node[midway, right=-.1cm]{$E_{1,n}$} (P12);
\draw[shorten >=-.3cm, shorten <=-.3cm, red] (P21) --node[midway, right=-.1cm]{$E_{2,n}$} (P22);
\draw[shorten >=-.5cm, shorten <=-.5cm, red] (P31) -- (P33)node[above=.3cm]{$E_{3,n}$};
\draw[shorten >=-.2cm, shorten <=-.2cm, red] (P41) -- node[midway, right]{$E_{4,n}$}(P42);
\draw[shorten >=-.3cm, shorten <=-.3cm, red] (P11) -- (P12);
\draw[shorten >=-.3cm, shorten <=-.3cm, red] (P21) -- (P22);
\draw[shorten >=-.5cm, shorten <=-.5cm, red] (P31) -- (P33);
\draw[shorten >=-.2cm, shorten <=-.2cm, red] (P41) -- (P42);
\draw[shorten >=-.3cm, shorten <=-.3cm] (P11) -- node[midway, left]{$L_{3,n}$} (P21);
\draw[shorten >=-.3cm, shorten <=-.3cm] (P22) -- node[midway,above=-.1cm]{$L_{1,n}$} (P31);
\draw[shorten >=-.3cm, shorten <=-.3cm] (P12) --node[midway,above]{$L_{2,n}$} (P33);
\node at (2.5, -1){\large $X_{n}$} ;
%\draw[->, green] (P101) --node[below=.1cm, left=-.2cm]{$x_n$} ($(P101)!.5cm!(P10)$);
%\draw[->, green] (P101) --node[right, below=-.1cm]{$y$} ($(P101)!.5cm!(P102)$);
\fill[green] (P101) circle[radius=1pt];
\end{scope}
\end{tikzpicture}
\caption{Iterated blow-up of the plane}\label{fig: blown}
\end{figure}
 where $L_{i,n}$ denote the strict transform of $L_{i}$ on $X_{n}$ for $1\leq i \leq 4$, and $E_{k,n}$, $1\leq k\leq n-1$, is the strict transform of the exceptional curve $E_k$ of $X_{k}\rightarrow X_{k-1}$ on $X_{n}$.

Now, let us look at the blown-up point $ P_{n+1}\in X_{n}$, $n\geq 4$. An element of  $G_n=G_4$ can be written as 
\[
\sigma_a  = 
\begin{bmatrix}
1 & 0 & 0\\
0 & 1 & 0 \\
0 & 0 & a
\end{bmatrix},\, a\in \CC^*
\]
It fixes the curve $L_{4,n}$ and preserves the exceptional curve $E_n$  of the blow-up $X_n\rightarrow X_{n-1}$
 for $n\geq 4$. At $P_4\in \PP^2$ there are local coordinates $(x,y)$ such that 
\[
\sigma_a(x,y)=(x,ay). 
\]
At  $P_5\in X_4$ there are local coordinates $(x/y, y)$, with $L_{4,4}=(x/y=0)$ and 
$ E_4=(y=0)$, such that 
\[
\sigma_a(x/y, y) = ((1/a)(x/y),ay)
\]
In general, at the point  $P_{n+5}\in X_{n+4}$ with $n\geq 1$ there are local coordinates $(x/y^{n+1}, y)$ such that $L_{4, n+4} =(x/y^{n+1}=0)$ and $E_{n+4} =(y=0)$, and
\[
\sigma_a(x/y, y) = ((1/a^{n+1})(x/y^{n+1}),ay)
\]
Observe that the local coordinate of $P_{n+5}\in E_{n+4}$ is $x/y^{n+1}$, so $\sigma_a$ acts as the  identity on  $E_{n+4}$ if and only if $a^{n+1}=1$; the two intersection points 
 $P_{n+5}, P_{n+5}'$ of $E_{n+4}$ with $L_{4, n+4}$ and the strict transform of $E_{n+3}$  are fixed by $\sigma_a$ for any $a\in \CC^*$. 

For any  $P\in E_{n+4}$, we have by 
 Principle~\ref{prin: descend}
\[
\Aut_{\QQ}( \Bl_P(X_{n+5}))= \{\sigma\in  G_{n+4}\mid \sigma(P) =P\}=
\begin{cases}
G_4
 & \text{ if } 
 P=P_{n+5} \text{ or } P_{n+5}' \\
\mu_{n+1} & \text{ otherwise}
\end{cases}
\]
where $\mu_{n+1}$ is the cyclic group of order $n+1$. As $P$ varies, we obtain a family of surfaces $\Phi\colon \mx\rightarrow 
 E_{n+4}\cong\PP^1$ 
 such that the fibre 
 $\Phi^*P = \Bl_P(X_{n+5})$. 
 
 These are all diffeomorphic, and there is a diffeomorphism 
by which $\Aut_{\QQ}(\Bl_P(X_{n+5})) $
 is always a subgroup of the
same group  $G_4$. 
Hence
\[
 \Aut_{\QQ}(\Bl_P(X_{n+5})) = \Aut_{*}(\Bl_P(X_{n+5})).
 \]
For 
 $P\in E_{n+4}-\{P_{n+5}, P_{n+5}'\}$
 we have 
  $\Aut_0(\Bl_P(X_{n+5}))=\{\id\}$,
  and
\[
[\Aut_{*}(\Bl_P(X_{n+5})): \Aut_0(\Bl_P(X_{n+5}))] = |\Aut_{*}(\Bl_P(X_{n+5}))| = n+1
\]
which can be arbitrarily large.
\end{proof}

\begin{rmk}
(1) The above construction, exploring the difference between $\Aut_0(\Bl_P(X))$ and $\Aut_0(X)$, shows that the statement in the fourth paragraph of \cite[page 251]{Pe80} is wrong.

\end{rmk}
Completely similar examples can be constructed for other non minimal ruled surfaces, blowing up appropriately 
 any decomposable
 $\PP^1$-bundle over  any curve $C$ of arbitrary genus. 

Indeed, generalizing Theorem~\ref{rational}, we obtain a recipe for constructing surfaces with $[\Aut_*(X):\Aut_0(X)]$ arbitrarily large: 
\begin{enumerate}[leftmargin=*]
\item Choose a $\CC^*$-surface  $Z$
 that is, a projective smooth surface with $\CC^*\cong G<  \Aut_0(Z)$. For example, we can take  $Z$
 to be any smooth toric surface or any decomposable $\PP^1$-bundle over a curve.
\item Let $Y\rightarrow Z$
be a composition of blow-ups of $G$-fixed points such that $\Aut_0(Y)=G$.
These blow-ups kill the automorphisms in  $\Aut_0(Z)\setminus G$
 while preserving the action of $G$.
\item Blow up a point $P\in Y$
and its infinitely near points that is fixed by the whole $G$,  in the same way as in the proof of Theorem~\ref{rational}:
\[
X_{n}\rightarrow\cdots \rightarrow X_{2} \rightarrow X_{1} \rightarrow Y 
\]
The exceptional curve $E_n$ of the $n$-th blow-up $X_{n}\rightarrow X_{n-1}$
 is then invariant under $G$ and is fixed by and only by a finite cyclic subgroup $\ZZ/n\ZZ\cong H<G$. 
\item Let $X\rightarrow X_{n}
$ be the blow-up of a general point $ P\in E_n$. Then it holds
\[
\Aut_0(X) =\{\id_X\}, \text{ and } \Aut_*(X) \cong \ZZ/n\ZZ.
\]
\end{enumerate}
The method can also be used to construct $X$ such that $\dim \Aut_0(X)>0$ and $[\Aut_*(X):\Aut_0(X)]$ is arbitrarily large.

\bigskip

 Surprisingly, however, the same unboundedness phenomenon for $\Ga_*(X): = \Aut_*(X) / \Aut_0(X)$ can happen also for minimal ruled surfaces:
 
 \begin{theo}\label{thm: ell rule unbounded}
 Let $E$ be an elliptic curve and let $X : = \PP (\hol_E \oplus \hol_E(D))$ where $D$ is a divisor of even positive degree $d  = 2m > 0$.
 
 Then $\Ga_*(X) $ surjects onto a subgroup of index at most $4$ inside $(\ZZ/d\ZZ)^2 $, in particular 
 $$|\Ga_*(X) | \geq m^2.$$
 \end{theo}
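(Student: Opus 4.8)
The plan is to determine $\Aut_0(X)$ and $\Aut_\QQ(X)$ completely by bundle theory, and then to decide which cohomologically trivial automorphisms are $C^\infty$-isotopic to the identity by means of a gauge-theoretic obstruction that lives in a group of order at most $4$.

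First I would settle the algebra. Let $\pi\colon X\to E$ be the ruling and $L=\hol_E(D)$, $\deg L=d=2m>0$. Since $\Hom(L,\hol_E)=H^0(\hol_E(-D))=0$ and $\Hom(\hol_E,L)=H^0(L)$ has dimension $d$, every bundle automorphism of $\hol_E\oplus L$ is lower triangular, so the automorphisms of $X$ lying over $\id_E$ preserve the unique negative section $C_0$ (coming from $L\hookrightarrow\hol_E\oplus L$), fix it pointwise, and modulo scalars form the connected group $\CC^*\ltimes H^0(L)$. As the image of $\Aut_0(X)$ in $\Aut(E)$ is a connected subgroup of the finite set of liftable translations, hence trivial, I get $\Aut_0(X)=\CC^*\ltimes H^0(L)$. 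Next, by Principle~\ref{prin: red fibre} every $\sigma\in\Aut_\QQ(X)$ preserves $\pi$ and induces a translation $\tau_e$ on $E$; such a $\tau_e$ lifts to $X$ precisely when $\tau_e^*L\cong L$, i.e. when $e$ lies in the kernel $K(L)=E[d]\cong(\ZZ/d\ZZ)^2$ of the polarization $E\to\Pic^0(E)$. Conversely any lift of such a $\tau_e$ fixes $C_0$ (Principle~\ref{prin: negative}) and the fibre class, hence acts trivially on $H^2(X,\QQ)=\langle[F],[C_0]\rangle_\QQ$, and trivially on $H^1(X,\QQ)=H^1(E,\QQ)$ since translations act trivially on the cohomology of $E$. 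Therefore $\Aut_\QQ(X)\to\Aut(E)$ has image $E[d]$ and kernel $\Aut_0(X)$, giving
$$\Ga_\QQ(X):=\Aut_\QQ(X)/\Aut_0(X)\cong(\ZZ/d\ZZ)^2.$$

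Since $\Ga_*(X)\subseteq\Ga_\QQ(X)=(\ZZ/d\ZZ)^2$ is automatically a subgroup, it suffices to prove $E[m]\subseteq\Ga_*(X)$, as $[E[d]:E[m]]=4$ and $|E[m]|=m^2$. The reduction I would use is the following. Fix $e\in E[d]$ and a lift $\sigma_e$ of $\tau_e$; choosing a smooth path $\gamma$ from $0$ to $e$ in $E$ and an Ehresmann connection on the $C^\infty$ $S^2$-bundle $X\to E$, I lift the base isotopy $\{\tau_{\gamma(t)}\}$ to a fibre-preserving isotopy $\{\Sigma_t\}$ of $X$ with $\Sigma_0=\id_X$ and $\Sigma_1$ a diffeomorphism over $\tau_e$. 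Then $\Sigma_1\in\Diff_0(X)$ and $\rho:=\sigma_e\Sigma_1^{-1}$ is fibrewise over $\id_E$, i.e. an element of the vertical gauge group $\sG$ of $\pi$; consequently $\sigma_e\in\Diff_0(X)$ as soon as $\rho\in\sG_0$, because $\sG_0\subseteq\Diff_0(X)$.

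Now comes the key computation together with a squaring trick. Because $d$ is even, the oriented $S^2$-bundle $X\to E$ has $w_2=0$ and is $C^\infty$-trivial, $X\cong_{\mathrm{diff}}S^2\times T^2$ over $E=T^2$; hence $\sG\simeq\mathrm{Map}(T^2,\Diff^+(S^2))$, and using Smale's $\Diff^+(S^2)\simeq\mathrm{SO}(3)$,
$$\pi_0(\sG)\cong[T^2,\mathrm{SO}(3)]\cong H^1(T^2;\ZZ/2\ZZ)\cong(\ZZ/2\ZZ)^2,$$
a group of exponent $2$; this is exactly where the hypothesis that $d$ is even is used. I then take $e\in E[m]$ and write $e=2f$ with $f\in E[d]$ (possible since $E[m]=2E[d]$). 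A lift $\sigma_f$ of $\tau_f$ gives a lift $\sigma_f^2$ of $\tau_e$, so $e\in\Ga_*(X)$ iff $\sigma_f^2\in\Diff_0(X)$. Writing $\sigma_f=\rho\Sigma_1$ as above yields $\sigma_f^2=\rho\,\rho'\,\Sigma_1^2$, where $\rho'=\Sigma_1\rho\Sigma_1^{-1}\in\sG$ and $\Sigma_1^2\in\Diff_0(X)$. The isotopy $\{\Sigma_t\}$ stays fibre-preserving, hence in the normalizer of $\sG$, and connects $\id_X$ to $\Sigma_1$, so conjugation by $\Sigma_1$ acts trivially on $\pi_0(\sG)$ and $[\rho']=[\rho]$; by exponent $2$ this gives $[\rho\rho']=2[\rho]=0$, so $\rho\rho'\in\sG_0\subseteq\Diff_0(X)$ and $\sigma_f^2\in\Diff_0(X)$. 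Thus $E[m]\subseteq\Ga_*(X)$, whence $[(\ZZ/d\ZZ)^2:\Ga_*(X)]\leq 4$ and $|\Ga_*(X)|\geq m^2$.

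I expect the main obstacle to be the gauge computation and its honest deployment, rather than the algebra. Concretely, one must justify the $C^\infty$-triviality of the bundle from the parity of $d$, identify the vertical gauge group and prove $\pi_0(\sG)\cong(\ZZ/2\ZZ)^2$ via Smale's theorem, and verify that conjugation by the fibre-preserving isotopy $\{\Sigma_t\}$ acts trivially on $\pi_0(\sG)$ so that the exponent-$2$ squaring argument applies. Once these topological points are secured, the identification of $\Aut_0(X)$, the isomorphism $\Ga_\QQ(X)\cong(\ZZ/d\ZZ)^2$ through $K(L)=E[d]$, and the relation $E[m]=2E[d]$ are formal.
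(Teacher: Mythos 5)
Your proposal is correct, and its algebraic half coincides with the paper's: the identification $\Aut_0(X)=\CC^*\ltimes H^0(\hol_E(D))$, the exact sequence $1\to\Aut_0(X)\to\Aut_\QQ(X)\to E[d]\to 1$ obtained from the degree/Krull--Schmidt argument forcing the twisting line bundle to be trivial, and the use of even degree to trivialize $X\to E$ differentiably are all exactly the paper's steps. Where you genuinely diverge is the topological step. The paper stays projective-linear: under the smooth trivialization $X\cong \PP^1\times E$ every $\gamma\in\Aut_\QQ(X)$ has the form $(v,t)\mapsto (A(t)v,\,t+a)$ with $A\colon E\to \PP\GL(2,\CC)$, the obstruction is the homomorphism $\gamma\mapsto \pi_1(A)\in\Hom(\pi_1(E),\ZZ/2\ZZ)\cong(\ZZ/2\ZZ)^2$, and on its kernel the paper lifts $A$ to $SU(2)\cong S^3$, invokes Sard's lemma to see the lift misses a point, and concludes that $A$, hence $\gamma$, is isotopic to the identity; this needs only $\pi_1(\PP SU(2))=\ZZ/2\ZZ$ and covering-space theory. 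You instead absorb the question into $\pi_0$ of the full nonlinear gauge group $\sG$ of the smooth $S^2$-bundle, compute $\pi_0(\sG)\cong [T^2,SO(3)]\cong(\ZZ/2\ZZ)^2$ via Smale's theorem $\Diff^+(S^2)\simeq SO(3)$, and then run the exponent-$2$ squaring trick (using conjugation-invariance of the gauge class along the Ehresmann lift $\Sigma_t$) to show every lift of $\tau_e$ with $e\in E[m]=2E[d]$ lies in $\Diff_0(X)$. Your route buys a concrete index-$4$ subgroup, namely $E[m]$, and dispenses with the Sard/lifting argument, since vanishing in $\pi_0(\sG)$ is by definition membership in $\sG_0$; the cost is the reliance on Smale's theorem and on the gauge-theoretic bookkeeping, both of which the paper's linear reduction avoids. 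One small repair: when you argue that an arbitrary holomorphic lift $\sigma$ of $\tau_e$, $e\in E[d]$, is cohomologically trivial, you cite Principle~\ref{prin: negative}, which presupposes $\sigma\in\Aut_\QQ(X)$ and is therefore circular at that point; instead use the uniqueness of the negative section $C_0$ (which you established earlier): $\sigma(C_0)$ is again a section of self-intersection $-d$, hence equals $C_0$.
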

 \begin{proof}

Consider the vector bundle  $V := \hol_E \oplus \hol_E(D)$, so that   $X := \PP(V)$.

Any $\ga \in \Aut(X)$ preserves the fibration $ f \colon X \ra E$, in particular $\ga$ acts on $E$. If $\ga \in \Aut_{\QQ}(X)$,
then necessarily $\ga$ acts on $E$ by a translation $\tau_a, \ a \in E$. Since $ \tau_a^* (X) \cong X$,
it must be that  $ \tau_a^* (V) \cong V \otimes L$, for a suitable line bundle $L$, say, of degree $l$,
 on $E$.
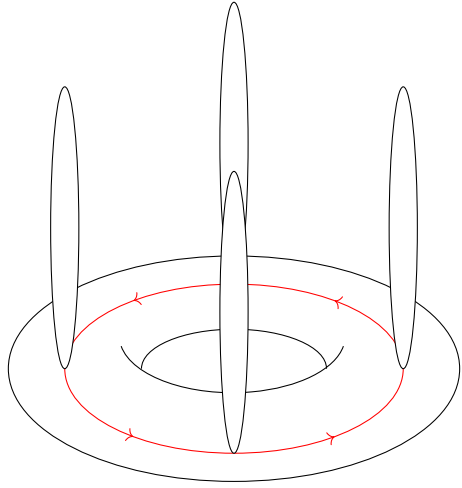
\begin{figure*}
\begin{tikzpicture}[scale=.75]
\begin{scope}[decoration={markings, mark=at position 0.5 with {\arrow{>}}}] 
\draw[] (0,0) ellipse [x radius=4cm, y radius=2cm];
\draw[] (-2,.4) arc [start angle=190, end angle=350, x radius=2cm, y radius=1cm];
\draw[] (1.64,0) arc [start angle=0, end angle=180, x radius=1.64cm, y radius=.7cm];
\draw[red, postaction={decorate}] (-3,0) arc [start angle=180, end angle=270, x radius=3cm, y radius=1.5cm];
\draw[red, postaction={decorate}] (0,-1.5) arc [start angle=270, end angle=360, x radius=3cm, y radius=1.5cm];
\draw[red, postaction={decorate}] (3,0) arc [start angle=0, end angle=90, x radius=3cm, y radius=1.5cm];
\draw[red, postaction={decorate}] (0,1.5) arc [start angle=90, end angle=180, x radius=3cm, y radius=1.5cm];
\end{scope}
\begin{scope} 
\fill[white] (-3, 2.5) ellipse [x radius=.25cm, y radius=2.5cm];
\draw (-3, 2.5) ellipse [x radius=.25cm, y radius=2.5cm];
\end{scope}
\begin{scope}[xshift=3cm, yshift=1.5cm]
\fill[white] (-3, 2.5) ellipse [x radius=.25cm, y radius=2.5cm];
\draw (-3, 2.5) ellipse [x radius=.25cm, y radius=2.5cm];
\end{scope}
\begin{scope}[xshift=6cm]
\fill[white] (-3, 2.5) ellipse [x radius=.25cm, y radius=2.5cm];
\draw (-3, 2.5) ellipse [x radius=.25cm, y radius=2.5cm];
\end{scope}
\begin{scope}[xshift=3cm, yshift=-1.5cm]
\fill[white] (-3, 2.5) ellipse [x radius=.25cm, y radius=2.5cm];
\draw (-3, 2.5) ellipse [x radius=.25cm, y radius=2.5cm];
\end{scope}
\end{tikzpicture}
\caption{ An automorphism of an elliptic ruled surface inducing translation on the base.}
\end{figure*}

Because from the degrees equality (they are in increasing order) $(0,d) = (l, d + l ) \Rightarrow l = 0$, we get that $L$ is trivial, and $ \tau_a^* ( \hol_E(D)) \cong  \hol_E(D)$,
hence $$ a \in \sK : = \Ker(\Phi_D : E \ra E) \cong (\ZZ/d\ZZ)^2,$$ where  $\Phi_D (a) = \tau_a^* (D) - D \in E$.

We have the exact sequence 
$$ 1 \ra \PP \GL(V) \ra  \Aut_{\QQ}(X) \ra \sK \ra 1$$
where the group $\PP \GL(V)$ is connected, since it consists of the linear maps 
$$ (v_1, v_2) \mapsto (v_1, \be v_1 + \la v_2), \beta \in H^0 (E, \hol_E(D)), \la \in \CC^*.$$ 

There remains to show that  a subgroup of index at most $4$ of $\Aut_{\QQ}(X)$ is contained in $ \Aut_*(X)$, so that 
this subgroup maps via $\Aut_{\QQ}(X) \ra \sK$ to a subgroup
of index at most $4$ in $\sK$.

To prove this, observe that, differentiably, $V$ is classified by $c_1(V)$ (it is the pull back of a classifying map to a
Grassmannian of $2$-planes).  Hence $V$ is topologically equivalent to $ \hol_E(D')\oplus  \hol_E(D')$, where $D'$ is a divisor of degree $=m$.

Hence $X$ is differentiably equivalent to the product $   \PP^1 \times E$. 

And $\Aut_{\QQ}(X)$ maps to the group of 
 vector bundle diffeomorphisms, so that $\ga (v,t) = (A(t) v, t + a)$, with $ a \in \sK$, and $A : E \ra \PP \GL(2, \CC)$.
 
 Hence $\ga$ is isotopic to $\ga_0 (v,t) : = (A(t) v, t )$. Now,
 $$A : E \ra \PP GL(2, \CC) = \PP SU(2) = SO(3) / \pm 1$$
  lifts to $SO(3) \cong S^3$ if and only if $\pi_1(A) : \pi_1(E) \ra \pi_1 (\PP SU(2) ) = \ZZ/2\ZZ$ is trivial. 
  
  The group $\sG : = \Hom ( \pi_1(E), \ZZ/2\ZZ ) \cong (\ZZ/2\ZZ)^2$ has exactly $4$ elements, and we have therefore
  a homomorphism $\pi: \Aut_{\QQ}(X) \ra \sG$. For elements   $\ga \in \Ker (\pi)$, we get diffeomorphisms 
  such  that the corresponding map $A$ has now 
  $\pi_1(A)=0$.
  
  Therefore $\ga \in \Ker (\pi)$ yields a map $A$ which lifts to a differentiable map $A'$ to the three-sphere $S^3$. This,   by Sard's lemma, omits one point $P_0$.
  Since  $S^3 \setminus \{ P_0\} \cong \RR^3$, which is contractible, we obtain that $A$ and $\ga$ are isotopic to the identity.
\end{proof} 

The case of more general $\PP^1$-bundles over curves shall be treated in the next sections.

\section{Cohomologically trivial automorphisms of rational surfaces}

In a sense, we are now  going first to explain the philosophy behind the construction of Theorem \ref{rational}.

Moreover, we sketch how a similar procedure leads to examples where $\Aut_0(X)$ is nontrivial and the
group of components $\Ga_*(X)$ can be arbitrarily large.

Let $X$ be a blow up of the projective plane $\PP^2$, obtained by first blowing up $P_1, \dots, P_r \in \PP^2$,
and then infinitely near points $P_{r+1}, \dots, P_k$. We assume that $k \geq 2$, else we have the plane or the 
$\PP^1$-bundle $ \FF_1 : = \PP (\hol_{\PP^1} \oplus \hol_{\PP^1}(1))$, and for these $\Aut(X) = \Aut_0 (X)$.

Then we define inductively: 
$$ G_0 : = \PP \GL (3, \CC), \ G_1 : = \{ g \in G_0 \mid g (P_1 ) = P_1\}, \ G_{i+1} : = \{ g \in G_i \mid g (P_{i+1} ) = P_{i+1}\},$$
and $ X_0 : = \PP^2, X_{1}, \dots , X_{k} = X$.

Here $G_i$ acts on $X_i$ and $ G_{i+1}$ is the fibre over $ P_{i+1}$ of $G_i \ra G_i  P_{i+1}$; notice that stabilizers
of points lying in a fixed orbit are conjugate, hence all fibres are smooth and  isomorphic, in particular they
are connected if the orbit is 1-connected, a fortiori if $G_i$ is 1-connected. 

Clearly  $ G_{i+1} = G_i$ if the orbit has dimension $0$, and 
$G_{i+1}$ is connected if the orbit is isomorphic to $\CC$ or $\PP^1$. 
But 
$G_{i+1}$ may not be connected if the orbit is isomorphic to $\CC^*$.

Notice that $G_1$ is isomorphic to  $ \Aff (2, \CC)$, while if $r \geq 2$, then $G_2$ is isomorphic to the subgroup
of the affine group
 $ \Aff (2, \CC)$ of transformations $ v \ra A v + w$ such that $e_1$ and $e_2$  are eigenvectors of the matrix $A$
 (put the two points at infinity).
 
 If $ r \geq 3$, then $G_3$ is isomorphic to $\CC^* \times \CC^*$ if the three points are not collinear, else we get
 the group  $\CC^2 \rtimes \CC^*$ of dilations $ v \ra \la v + w$ ($\la \in \CC^*, w \in \CC^2$).
 
We can briefly summarize the situation as follows. If $X$ is a blow up of $\PP^2$, then $G_k = \Aut_{\QQ}(X) $ is:
 \begin{enumerate}
 \item
 trivial if $r \geq 4 $ and $\{P_1, \dots, P_r\}$  contains a projective basis,
 \item
 a subgroup of $\CC^* \times \CC^*$ if $ r \geq 3$ and $\{P_1, \dots, P_r\}$ contains three
 non collinear points, here $G_k$ does not need to be connected if $k \geq 5$;
 \item
 a subgroup of $\CC^2 \rtimes \CC^*$ if all the points $P_1, \dots, P_r $ are collinear and $r \geq 3$,  
 again here $G_k$ does not need to be connected if $k \geq 5$;
 \item
 $G_k$ is connected if $k \in \{1,2\}$, and for $k=3$ (both $r=2$ and $r=1$);
 \item
 $G_4$ may be   disconnected for $k=4$, $r=1$. 
 
 \end{enumerate}
 
To conclude with these general observations, we observe that
if  $X$ is a blow up of $\PP^2$, then $\Aut_{\QQ}(X) = \Aut_{*}(X)$.
Indeed the same statement holds for all rational surfaces, and a proof 
could be done  by induction on $k$ for $\Aut_{\QQ}(X)= G_k$
provided one could show the following general assertion: 

\begin{question}
Let $X$ be a compact complex manifold and $\s \in \Aut(X)$ an automorphism which admits a fixed point $P$
and which is differentiably isotopic to the identity. Let $Z $ be the blow up of $X$ at $P$, and $\s'$
the induced automorphism of $Z$. Is $\s'$ differentiably isotopic to the identity?
\end{question}
Following the ideas we have introduced, we shall give a proof of the next theorem, which  can be viewed as a cohomological characterization of $C^\infty$-isotopically trivial automorphisms for smooth projective rational surfaces.
\begin{thm}\label{thm: rational}
Let $X$ be a smooth projective rational surface. Then 
\[
\Aut_*(X)=\Aut_\ZZ(X) = \Aut_\QQ(X).
\]
\end{thm}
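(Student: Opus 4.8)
The plan is to prove the single nontrivial inclusion $\Aut_\QQ(X)\subseteq\Aut_*(X)$; together with the ladder $\Aut_*(X)\vartriangleleft\Aut_\ZZ(X)\vartriangleleft\Aut_\QQ(X)$ recalled in the introduction, this forces the three groups to coincide. Since every element of the identity component is joined to $\id_X$ by a path of biholomorphisms, hence lies in $\Diff_0(X)$, we have $\Aut_0(X)\subseteq\Aut_*(X)$; and $\Aut_0(X)$ is the identity component of $\Aut_\QQ(X)$. So it suffices to show that \emph{every} $\sigma\in\Aut_\QQ(X)$ is $C^\infty$-isotopic to the identity, and only the finitely many components of $\Aut_\QQ(X)$ are at issue.

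First I would reduce to the minimal model. Writing $X=X_n\to\cdots\to X_0$ as an iterated blow-up of a minimal rational surface $X_0$, which is $\PP^2$ or a Hirzebruch surface $\FF_m$, one checks in each case that $\Aut_\QQ(X_0)=\Aut_0(X_0)$ is connected: on $\PP^2$ the whole group $\PP\GL(3,\CC)$ fixes $H^*(\PP^2,\QQ)$, while on $\FF_m$ every automorphism preserves the fibre class and the unique section of minimal self-intersection, so acts trivially on $H^2$, and $\Aut(\FF_m)$ is connected for $m\neq 0$ (for $m=0$ the cohomologically trivial part is the connected $\PP\GL(2,\CC)^2$). Thus the base case $\Aut_\QQ(X_0)\subseteq\Aut_*(X_0)$ holds by connectedness. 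By Principle~\ref{prin: descend}, any $\sigma\in\Aut_\QQ(X)$ descends to $\sigma_0\in\Aut_0(X_0)$ fixing all (possibly infinitely near) centres $P_0,\dots,P_{n-1}$, and I would try to lift isotopic triviality back up the tower one blow-up at a time.

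The heart of the matter is exactly the preceding Question: if $\tau\simeq\id$ fixes a point $P$, is its lift to $\Bl_P$ again isotopic to the identity? A naive lift of an isotopy need not fix the moving image of $P$, so there is no free lunch, and this is the main obstacle. I would resolve it not in full generality but using the extra structure at hand, following the mechanism of Theorem~\ref{rational}: the finite cyclic jumps in the component group $\Aut_\QQ(X)/\Aut_0(X)$ all arise from a $\CC^*$-action, a subgroup $\mu_N$ of which acts trivially on the exceptional curve responsible for the jump. Concretely, at the relevant stage one varies the blow-up centre $P$ along that exceptional curve $E\cong\PP^1$ to obtain a smooth proper family $\Phi\colon\mx\to E$ whose fibre over $P$ is $\Bl_P$ of the previous surface, on which the finite group $\Gamma:=\langle\sigma\rangle$ acts fibrewise over $E$ (because $\Gamma\subset\mu_N$ fixes $E$ pointwise). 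At a special centre $P_0\in E$ fixed by the whole $\CC^*$ the stabilizer jumps to the connected group $\CC^*$, so there $\Gamma\subseteq\CC^*\subseteq\Aut_0(\mx_{P_0})\subseteq\Aut_*(\mx_{P_0})$ and the $\Gamma$-action is isotopically trivial.

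It then remains to transport isotopic triviality across the family. Because $\Gamma$ is finite, hence compact, an averaged connection yields a $\Gamma$-equivariant form of Ehresmann's theorem: over any disc $U\subseteq E$ there is a $\Gamma$-equivariant diffeomorphism $\Phi^{-1}(U)\cong\mx_{P_0}\times U$. Such an equivariant trivialization conjugates the $\Gamma$-action on one fibre to that on another, and the property ``the $\Gamma$-action is isotopic to the identity'' is invariant under conjugation by a diffeomorphism; hence it is locally constant, so constant, on the connected base $E$. Transporting from $P_0$ to the generic fibre gives $\sigma\in\Aut_*(X)$, and carrying this out at each blow-up where the component group grows (the intervening connected steps being automatic) completes the induction. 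The delicate points to get right are the \emph{equivariance} of the trivialization — which is precisely what makes isotopic triviality, rather than mere diffeomorphism type, transport across the family — and the case-by-case verification that each component of $\Aut_\QQ(X)$ is indeed captured by such a $\CC^*$ at a suitable specialization of the centre.
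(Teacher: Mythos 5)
Your mechanism is the paper's own: exhibit (a representative of) $\sigma$ as a finite-order element of a one-parameter subgroup $T\cong\CC^*$ acting at some lower stage of the tower, fit the blow-up into a family carrying a fibrewise $\langle\sigma\rangle$-action that extends to a $T$-action on special fibres, and transport isotopic triviality across the family; your equivariant-Ehresmann transport is a correct, and in fact more detailed, rendering of the paper's terse final step. But there are genuine gaps. The structural input you defer to a ``case-by-case verification'' is exactly the paper's Lemma~\ref{lem: Gm}: by Jordan decomposition and maximal tori, every component of $\Aut_\QQ(X)$, viewed inside the connected group $\Aut_\QQ(Y)=\Aut_0(Y)$ of the minimal model via Principle~\ref{prin: descend}, contains a finite-order element lying in a subtorus $T\cong\CC^*$; note this is also the only source of the finiteness of your $\Gamma$, which your averaging argument needs, since $\sigma$ itself need not have finite order. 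Moreover, the base of your family is wrong in general: the centre causing a jump need not lie on any exceptional curve. Blow up the three coordinate points of $\PP^2$, then $P_4=E_1\cap L_3'$, then a general point $P_5$ of the exceptional curve $E_4$ over $P_4$; one computes $\Aut_\QQ(X_5)=T=\{(x_0:x_1:x_2)\mapsto (x_0:sx_1:s^2x_2)\}\cong\CC^*$, acting trivially on $E_4$. Now blow up $P_6=(1:0:1)$: its stabilizer in $T$ is $\mu_2$, so $\Aut_\QQ(X_6)\cong\ZZ/2\ZZ$ is a jump, yet $P_6$ lies on no exceptional curve, and varying a centre along $E_4$ never produces the surface $X_6$. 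The curve that works is the closure of the orbit $TP_6$ (here the strict transform of the line $\{x_1=0\}$), which is why the paper takes as base the normalization $\PP^1\rightarrow\overline{TP_i}$ of the orbit closure rather than an exceptional curve.

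The more serious gap is that your family has fibres $\Bl_P$ of a single intermediate surface, so the transport proves only that $\sigma$ is isotopically trivial on $X_{j+1}$, not on $X$; the assertion that transporting ``gives $\sigma\in\Aut_*(X)$'' is correct only when the jump occurs at the very last blow-up, as in the example of Theorem~\ref{rational} on which you modelled the argument. Your fallback induction does not repair this: once the first jump has occurred, $\sigma$ never again lies in the identity component of $\Aut_\QQ(X_j)$ for any larger $j$, so the remaining steps are not ``connected steps'', and carrying isotopic triviality of $\sigma|_{X_{j+1}}$ through the next blow-up at a fixed point is precisely the open Question that you yourself identify as the main obstacle --- so the induction is circular exactly where it is needed. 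The paper's proof never lifts an isotopy: starting at the first stage $i$ where $T$ fails to lift, it extends \emph{every} subsequent blow-up $T$-equivariantly over the base (the later centres trace out sections of the families, which are blown up in turn), so that $X$ itself, not an intermediate surface, is a fibre of a $T$-equivariant family $\sX_n\rightarrow\PP^1$ carrying a fibrewise $\langle\sigma\rangle$-action that extends to $T$ over $0$ and $\infty$; only then does a single transport argument give $\sigma\in\Diff_0(X)$. Without this continuation, your argument proves the theorem only for those rational surfaces whose entire component group is created by the final blow-up along an exceptional curve.
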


We need the following lemma for the proof of Theorem~\ref{thm: rational}.
\begin{lem}\label{lem: Gm}
Let $G$ be a connected linear algebraic group, defined over $\CC$. Let $H$ be an algebraic subgroup of $G$ and $H_0$ its identity component. Then, for any $\sigma\in H\setminus H_0$, there is an element $\sigma'\in \sigma H_0$ such that $\sigma'$ has finite order, and $\sigma'$ is contained a $1$-dimensional multiplicative subgroup $T\cong\CC^*$ of $G$.
\end{lem}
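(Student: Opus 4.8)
The plan is to reduce everything to producing a single \emph{finite-order} element in the given component $\sigma H_0$, and then to observe that finiteness of order already forces membership in a one-dimensional torus. First I would record the easy half: if $\sigma'\in G$ has finite order $k$, then in characteristic zero $\sigma'$ is semisimple (its minimal polynomial divides $x^k-1$, which has distinct roots), and since $G$ is connected every semisimple element lies in a maximal torus $T_{\max}\cong(\CC^*)^n$. Writing $\sigma'=(\zeta^{e_1},\dots,\zeta^{e_n})$ with $\zeta$ a fixed primitive $k$-th root of unity, the cocharacter $\lambda\colon\CC^*\to T_{\max}$, $\lambda(s)=(s^{e_1},\dots,s^{e_n})$, has image a subtorus $T\cong\CC^*$ (it is nontrivial because the representative we produce satisfies $\sigma'\neq e$, as it lies in $\sigma H_0\neq H_0$) and $\sigma'=\lambda(\zeta)\in T$. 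Thus the whole lemma follows once a finite-order representative of the coset is found.

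To produce such a representative I would use the Jordan decomposition. Pick any $\sigma\in H\setminus H_0$ and write $\sigma=\sigma_s\sigma_u$ with $\sigma_s$ semisimple, $\sigma_u$ unipotent, the two commuting. Both factors lie in $H$ because they belong to the Zariski closure $\overline{\langle\sigma\rangle}\subseteq H$; moreover $\sigma_u\in H_0$, since the closure $\overline{\langle\sigma_u\rangle}$ is a connected one-parameter unipotent subgroup of $H$ through the identity and $\sigma_u$. Consequently $\sigma_s=\sigma\sigma_u^{-1}\in\sigma H_0$, so it suffices to find a finite-order element in the coset $\sigma_s H_0$ of the \emph{semisimple} element $\sigma_s$.

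For this, set $D:=\overline{\langle\sigma_s\rangle}$, a commutative diagonalizable group, whence $D\cong(\CC^*)^r\times A$ with $A$ finite abelian and $D_0\cong(\CC^*)^r$. Since $\langle\sigma_s\rangle$ is dense in $D$, its image in the finite quotient $D/D_0\cong A$ is all of $A$; thus $A$ is cyclic, generated by the class $a_0$ of $\sigma_s$. Now $D_0\subseteq H_0$ and $\sigma_s D_0=(\CC^*)^r\times\{a_0\}$ contains the element $\sigma':=(1,a_0)$, which has finite order equal to $|A|$. Since $\sigma'\in\sigma_s D_0\subseteq\sigma_s H_0=\sigma H_0$, this is the sought finite-order representative, and by the first paragraph it lies in a one-dimensional multiplicative subgroup $T\cong\CC^*$ of $G$.

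The only genuine inputs are structural: the intrinsic Jordan decomposition and its compatibility with closed subgroups (to land $\sigma_s,\sigma_u$ in $H$ and $\sigma_u$ in $H_0$), the structure theorem $D\cong(\CC^*)^r\times A$ for diagonalizable groups, and the fact that semisimple elements of a connected group lie in maximal tori. I expect the main point to watch is the passage from the semisimple element $\sigma_s$, which itself need not have finite order, to the finite-order $\sigma'$: the crux is that the component group of $\overline{\langle\sigma_s\rangle}$ is cyclic and generated by $\sigma_s$, which is exactly what lets one strip off the torus part $D_0$ without leaving the coset.
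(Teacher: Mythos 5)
Your proof is correct, and it shares the paper's skeleton at both ends: the same reduction via the intrinsic Jordan decomposition (with the identical observation that $\sigma_u\in H_0$ because $\overline{\langle\sigma_u\rangle}$ is connected, so $\sigma_s\in\sigma H_0$), and the same finish (a finite-order, hence semisimple, element of the connected group $G$ lies in a maximal torus and then in a one-dimensional subtorus --- a step the paper dismisses with ``one sees easily'' and you make explicit with the cocharacter $\lambda$ and $\sigma'=\lambda(\zeta)$). Where you genuinely diverge is the middle step, producing the finite-order representative from the semisimple one. The paper chooses $n$ with $\gamma:=\sigma^n\in H_0$, places $\gamma$ in a maximal torus $T_H$ of $H_0$, uses divisibility of $T_H$ to find $\tau$ with $\tau^n=\gamma$ and $\sigma\tau=\tau\sigma$, and sets $\sigma':=\sigma\tau^{-1}$; the commutation is justified by saying $\sigma$ commutes with the one-parameter subgroup generated by $\gamma$, which really presupposes that $\tau$ is taken inside $\overline{\langle\sigma\rangle}$ --- an arbitrary $n$-th root of $\gamma$ in $T_H$ need not commute with $\sigma$ when $H_0$ is nonabelian. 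You instead work entirely inside the commutative diagonalizable group $D=\overline{\langle\sigma_s\rangle}$, invoke the structure theorem $D\cong(\CC^*)^r\times A$, note that $A$ is cyclic generated by the class of $\sigma_s$ (density of $\langle\sigma_s\rangle$ in $D$), and strip off the torus factor, taking $\sigma'=(1,a_0)$. This buys two things: all commutation questions vanish, since you never leave an abelian group, and the root extraction that the paper leaves implicit becomes canonical --- your implicit $\tau$ is exactly the $D_0$-component of $\sigma_s$, which is precisely the choice needed to make the paper's divisibility argument airtight. What the paper's route buys in exchange is brevity, given the facts it quotes from Humphreys (maximal tori of $H_0$ containing semisimple elements, divisibility of tori), whereas your version trades those for the structure theorem of diagonalizable groups; the external inputs are of comparable depth.
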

\begin{proof}
Let $\sigma=\sigma_s\sigma_u$ be the Jordan decomposition in $H$, with $\sigma_s$ semisimple and $\sigma_u$ unipotent (\cite[Theorem~2.4.8]{Spr98}, \cite{humphreys} 15.3, page 99). Necessarily $\sigma_u\in H_0$, because the Zariski closure of the subgroup generated by $\sigma_u$ is an additive group $\GG_a \cong \CC$ (see  \cite{humphreys},  15.5 exercise 9, page 101).

Hence $\sigma_s\in \sigma G_0$. We can replace $\sigma$ by $\sigma_s$, and assume that $\sigma$ is semi-simple. Since $H$ is algebraic, there exists an $n\in\ZZ_{>0}$ such that $\gamma:=\sigma^n \in H_0$. Then (see \cite{humphreys}, 22.2, Cor. A, page 140, and Prop. 19.2, pages 122-123)
there is a maximal torus $T_H$ of $H_0$ containing $\gamma$. Note that $T_H$ is divisible and commutative. Therefore, there exists an $\tau\in T_H$ such that $\sigma^n= \tau^n$ and $\sigma\tau=\tau\sigma$ follows since $\s$ commutes with the whole 1-parameter subgroup generated by $\ga$. Then $\sigma':=\sigma\tau^{-1}\in \sigma T_H\subset \sigma H_0$ is of finite order. 

Since $G$ is connected, there is a maximal torus $T_G$ of $G$ containing $\sigma'$. Since $\sigma'$ has finite order, one sees easily that there is a $1$-dimensional multiplicative group $T\cong\CC^*$ of $T_G$ containing $\sigma'$.
\end{proof}

\begin{proof}[Proof of Theorem~\ref{thm: rational}]
It suffices to show that $\Aut_*(X) = \Aut_\QQ(X)$. 

Let $\sigma\in \Aut_\QQ(X)$ be a numerically trivial automorphism. Without loss of generality, we can assume that $\sigma\notin \Aut_0(X)$. 

 Let $f\colon X\rightarrow Y$  be a birational morphism to a smooth minimal model. Since $f$ is a composition of blow-downs of $(-1)$-curves, which are preserved by numerically trivial automorphisms (Principle~\ref{prin: negative}), the automorphism $\sigma$ descends all along to an automorphism $\sigma_Y\in \Aut_\QQ(Y)$. In fact, we have $\Aut_\QQ(X)\subset \Aut_\QQ(Y)$, viewed as subgroups of the birational automorphism group $\Bir(X)=\Bir(Y)$. Note that $Y$ is either $\PP^2$ or one of the Segre-Hirzebruch surfaces, and it holds $\Aut_\QQ(Y)=\Aut_0(Y)$.

 By Lemma~\ref{lem: Gm}, after replacing $\sigma$ by an automorphism $\sigma'$ in the same component  of $\Aut_\QQ(X)$, there is a subgroup $T\cong\CC^*$ of $\Aut_\QQ(Y)$ containing $\sigma$.  Factor $f\colon X\rightarrow Y$ into
\[
f\colon X= X_{n}\xrightarrow{f_{n}} X_{n-1} \xrightarrow{f_{n-1}}\cdots \rightarrow X_{1} \xrightarrow{f_{1}} X_{0} =Y
\]
so that $f_{i+1}\colon X_{i+1}\rightarrow X_{i}$ blows up an $\Aut_\QQ(X)$-fixed point $P_{i}\in X_{i}$. There is some $0\leq i\leq n-1$ such that
\begin{itemize}[leftmargin=*]
\item $T<\Aut(X_{i})$, but
\item $P_{i}\in X_{i}$ is not fixed by the whole $T$,
\end{itemize}
so the action of $T$ does not lift to $X_{i+1}$. The orbit of $P_{i}$ under the action of $T$ is  
 $TP_{i}\cong \CC^*.$
The closure $\overline{TP_{i}}$ is an irreducible rational curve with $T$ acting on it, and the normalization $\nu\colon\PP^1\rightarrow \overline{TP_{i}}$ is $T$-equivariant. Let $\Gamma_\nu\subset \PP^1\times X_{i}$ be the graph of $\nu\colon \PP^1\rightarrow \overline{TP_{i}}\hookrightarrow X_{i}$. Note that $T$ acts diagonally on $\PP^1\times X_{i}$ and $\Gamma_\nu$ is $T$-invariant. Blowing up $\PP^1\times X_{i}$ along $\Gamma_\nu$, we obtain a $T$-equivariant family 
\[
\Phi_{i+1}\colon \sX_{i+1} \xrightarrow{F_{i+1}} \PP^1\times X \rightarrow \PP^1
\]
The fibre of $\Phi_{i+1}$ over $P_{i}\in TP_{i}\subset \PP^1$ is just $X_{i+1}$, and  the action of $T$ lifts to the fibres over $0$ and $\infty$.

Now let us look at the blow-up $f_{i+2}\colon X_{i+2}\rightarrow X_{i+1}$. Recall that the blown-up point is $P_{i}\in X_{i+1} \subset \sX_{i+1}$. The orbit $TP_{i} \subset \sX_{i+1}$ is a section of $\Phi_{i+1}$ over $\PP^1\setminus \{0,\infty\}$, and it readily extends to a section $\sS_{i-1}$ over the whole $\PP^1$. Let $F_{i+2}\colon \sX_{i+2}\rightarrow \sX_{i+1}$ be the blow-up along $\sS_{i-1}\cong \PP^1$, which is $T$-equivariant, extending $f_{i+2}\colon X_{i+2}\rightarrow X_{i+1}$ to a family of blow-ups.

We can continue this way, extending each blow-up $f_{j+1}\colon X_{j+1}\rightarrow X_j$, $j\geq i$, to a $T$-equivariant family of blow-ups $F_{j+1}\colon \sX_{j+1}\rightarrow \sX_j$. In the end, we get a commutative diagram of $T$-equivariant morphisms:
\[
\begin{tikzcd}
\sX_{n} \arrow[r, "F_{n}"] \arrow[rrd,"\Phi_{n}"']&\sX_{n-1} \arrow[r, "F_{n-1}"] \arrow[rd] &\cdots \arrow[r] \arrow[d]& \sX_{i+1} \arrow[r, "F_{i+1}"]  \arrow[dl]& \sX_{i}  \arrow[dll, "\Phi_{i}"]\\
&& \PP^1 &&
\end{tikzcd}
\]
The $T$-equivariant family $\Phi_n\colon \sX_{n}\rightarrow \PP^1$ satisfies the following properties:
\begin{itemize}[leftmargin=*]
\item $X$ is the fibre of  $\Phi_{n}$ over the point $P_{i}\in \PP^1$;
\item The subgroup $H=\langle \Aut_0(X), \sigma \rangle$ of $\Aut_\QQ(X)$ acts fibrewise on $\sX_{n}$ and extends to an action of $T$ on the two fibres over $0$ and $\infty$.
\end{itemize}
Since $T\cong\CC^*$ is connected, we infer that $\sigma$ is $C^\infty$-isotopic to $\id_X$.
\end{proof}

\section{Cohomologically
trivial automorphisms of  surfaces according to Kodaira dimension}
In this section, we  begin to investigate more systematically the boundedness of $[\Aut_\QQ(X):\Aut_0(X)]$, looking through the Enriques--Kodaira classification of surfaces.

\subsection{Case $\kappa(X)=-\infty$}
In this subsection we study the case $\kappa(X)=-\infty$ and $X \neq \PP^2, \PP^1 \times \PP^1$. 

In fact, for $\PP^2$, $\Aut(X) = \Aut_0(X)$,
while, for $X= \PP^1 \times \PP^1$, $\Aut_0(X) <  \Aut(X)$ has index $2$ and  $\Aut_0(X) = \Aut_\QQ(X)$.

Let $B$ be a smooth projective curve and $f\colon X \rightarrow B$ be a $\PP^1$-bundle. Then  $X\cong \PP(\E)$ is the projectivization of some rank two vector bundle $\E\rightarrow B$. We denote by  $\me$  the sheaf of holomorphic sections of $\E$ and often do not distinguish between $\me$ and $\E$. We shall say that $X$ is decomposable as a ruled surface over $B$ if $\E$ is so. We have $\pi_1(X)\cong\pi_1(B)$ and $H_1(X,\ZZ) \cong H_1(B,\ZZ)$ which is torsion free, while $H^2(X,\ZZ) = \ZZ F \oplus \ZZ \Sigma$,
where $F$ is a fibre and $\Sigma$ is a section, hence $\Aut_\QQ(X) = \Aut_\ZZ(X)$.

Moreover, topologically and differentiably the bundle $\E$ is determined by its first Chern class (which determines the class of the map to the classifying space, the infinite Grassmannian $Gr_{\CC}(2, \infty)$).

Hence from the differentiable view point there are only two cases:
\begin{enumerate}
\item
$\deg (\sE)$ even: $X$ is diffeomorphic to $ \PP^1 \times B$
\item
$\deg (\sE)$ odd: $X$ is diffeomorphic to $\PP (\hol_B \oplus \hol_B(P))$, where $P$ is a point of $B$,
and it is obtained from $ \PP^1 \times B$ via an elementary transformation blowing up a point and then blowing down
the strict transform of the fibre through the point.
\item
$\deg (\sE)$ odd and $B$ has genus $\geq 1$: then there is an \'etale double covering $B' \ra B$ such that the
pull back $X'$ is diffeomorphic to $\PP^1 \times B'$.
\end{enumerate}

The following facts about $\Aut(X)$ can be found for instance  in \cite{Ma71}:
\begin{enumerate}[leftmargin=*]
\item If $X\neq \PP^1\times\PP^1$ then there is exactly one $\PP^1$-bundle structure on $X$, so we have an exact sequence
\[
1\rightarrow \Aut_B(X) \rightarrow \Aut(X)\ra \Aut(B).
\]
where $$\Aut_B(X) = \{\sigma\in\Aut(X)\mid \sigma \text{ preserves every fibre of } f\colon X\rightarrow B\}.$$
The image of $\Aut(X)\ra \Aut(B)$ is the group of automorphisms $\ga$ of $B$ such that $\ga^* (\sE) \cong \sE \otimes L$
for a suitable line bundle $ L \in \Pic^0(B)$.

\item The exact sequence of sheaves of Lie groups on $B$
$$ 1 \ra \hol_B^* \ra \GL(2, \hol_B) \ra \PP \GL(2, \hol_B) \ra 1$$
induces a short exact sequence
\[
1\rightarrow \Aut_B(\E)/\CC^* \rightarrow \Aut_B(X) \rightarrow \Delta\rightarrow 1
\]
where $\Aut_B(\E)$ denotes the automorphism group of the vector bundle $\E$ over $B$, $\CC^*$ acts on the fibres of $\E\rightarrow B$ by scalar multiplication, and $\Delta:=\{\ml\in\Pic^0(B)\mid \me\otimes\ml\cong\me\}$. 

Note  that the fact that the cokernel is $\Delta$, contained in $\Pic^0(B)[2]$, the $2$-torsion part of $\Pic^0(B)$, follows
 since every automorphism of $X$ preserves the
class of the  relative canonical divisor.

\item Let $e:= \max \{2\deg\ml -\deg \me \mid \ml\subset \me \text{ invertible subsheaf}\}$. Observe
that $- e$  is the minimal self intersection
of a section of $ X \ra B$, and that $\sE$ is stable if $ e < 0$.

Then there are the following possibilities for $\Aut_B(\E)$:
\begin{enumerate}[leftmargin=*]
\item If $e<0$, that is, $\me$ is stable, then $\Aut_B(\E) = \CC^*$.
\item If $\me$ is indecomposable, $\ml\subset\me$ is the (unique) invertible subsheaf of maximal degree, $r=h^0(B, \ml^{\otimes2}\otimes(\det\me)^{-1})$, then 
$\Aut_B(\E)  \cong H_r$, where 

\[
H_r :=\left\{\left(
\begin{pmatrix} \alpha & 0\\ 0 & \alpha \end{pmatrix},\begin{pmatrix} \alpha & t_1\\ 0 & \alpha \end{pmatrix},\cdots, \begin{pmatrix} \alpha & t_r\\ 0 & \alpha \end{pmatrix}
\right)\in \GL(2,\CC)^{r+1} \mid\alpha\in\CC^*, t_i\in \CC \right\}
\]
\item If $\me=\ml_1\oplus\ml_2$ with $\ml_1\not\cong \ml_2$ and $\deg\ml_1\geq\deg\ml_2$, then
 $\Aut_B(\E)  \cong H_r '$, where
\[
H_r ':=\left\{\left(
\begin{pmatrix} \alpha & 0\\ 0 & \beta \end{pmatrix},\begin{pmatrix} \alpha & t_1\\ 0 & \beta \end{pmatrix},\cdots, \begin{pmatrix} \alpha & t_r\\ 0 & \beta \end{pmatrix}
\right)\in \GL(2,\CC)^{r+1} \mid\alpha,\beta\in\CC^*, t_i\in \CC \right\}
\]
where $r=h^0(B, \ml_1^{\otimes 2}\otimes(\det \me)^{-1})$.
\item If $\me=\ml\oplus \ml$, then $\Aut_B(\E)  \cong \GL(2,\CC)$.
\end{enumerate}
\end{enumerate}

\begin{cor}\label{cor: q geq 2}
Let $f\colon X=\PP(\E)\rightarrow B$ be a $\PP^1$-bundle over a smooth curve $B$ with $g(B)\geq 2$. Then  $\Aut_\QQ(X) = \Aut_\ZZ(X) = \Aut_B(X)$, $\Aut_0(X) \cong \Aut_B(\E)/\CC^*$ and $\Aut_\ZZ(X)/\Aut_0(X)\cong \Delta$, where $\Delta$ is as in (2) above.
\end{cor}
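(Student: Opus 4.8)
The plan is to prove Corollary~\ref{cor: q geq 2} by combining the structural results (1)--(3) about $\Aut(X)$ recalled just above with the fact that $g(B)\geq 2$ forces every $\sigma\in\Aut_\QQ(X)$ to act trivially on the base. First I would establish the equality $\Aut_\QQ(X) = \Aut_\ZZ(X)$, which is immediate here because $H^2(X,\ZZ) = \ZZ F \oplus \ZZ\Sigma$ is torsion free, so acting trivially on $H^*(X,\QQ)$ is the same as acting trivially on $H^*(X,\ZZ)$. Next I would show $\Aut_\QQ(X) = \Aut_B(X)$: by Principle~\ref{prin: red fibre} any $\sigma\in\Aut_\QQ(X)$ preserves the fibration $f\colon X\to B$ and induces an action $\bar\sigma$ on $B$; since $g(B)\geq 2$, Lefschetz's principle (as in the proof of Principle~\ref{prin: red fibre}) forces $\bar\sigma=\id_B$. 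As $X\neq\PP^1\times\PP^1$ has a unique $\PP^1$-bundle structure, $\bar\sigma=\id_B$ means exactly that $\sigma$ preserves every fibre, i.e.\ $\sigma\in\Aut_B(X)$. Conversely every element of $\Aut_B(X)$ acts trivially on $B$, hence fixes $F$ and $\Sigma$ in $H^2$, so $\Aut_B(X)\subset\Aut_\QQ(X)$; thus $\Aut_\QQ(X)=\Aut_B(X)$.

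Having identified $\Aut_\QQ(X)$ with $\Aut_B(X)$, I would then read off the identity component and the component group from the exact sequence in item (2),
\[
1\rightarrow \Aut_B(\E)/\CC^* \rightarrow \Aut_B(X) \rightarrow \Delta\rightarrow 1.
\]
The key observation is that the normal subgroup $\Aut_B(\E)/\CC^*$ is connected. Indeed, inspecting the four cases for $\Aut_B(\E)$ in item (3) --- namely $\CC^*$, the groups $H_r$ and $H_r'$, and $\GL(2,\CC)$ --- each is a connected linear algebraic group, and quotienting by the central $\CC^*$ keeps it connected; hence $\Aut_B(\E)/\CC^*$ is connected. Since $\Delta\subset\Pic^0(B)[2]$ is a finite group, the connected subgroup $\Aut_B(\E)/\CC^*$ is precisely the identity component of $\Aut_B(X)=\Aut_\QQ(X)=\Aut_\ZZ(X)$. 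Therefore $\Aut_0(X)\cong\Aut_B(\E)/\CC^*$, and the quotient $\Aut_\ZZ(X)/\Aut_0(X)$ is identified with $\Delta$ via the same exact sequence.

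The one point that needs genuine care --- the main obstacle --- is the claim that the identity component of $\Aut_B(X)$ really equals the fibrewise-scalar group $\Aut_B(\E)/\CC^*$, rather than some larger group, i.e.\ that the extension by $\Delta$ contributes exactly the components. This requires knowing that $\Aut_B(X)$ is itself a linear algebraic (or at least an algebraic Lie) group whose dimension matches that of $\Aut_B(\E)/\CC^*$, so that no positive-dimensional piece is hiding in the $\Delta$-direction; this follows because $\Delta$ is finite and the sequence in (2) is a sequence of algebraic groups, so the connected component maps isomorphically onto the connected component of the kernel. I would also double-check that the map $\Aut_B(X)\to\Delta$ is surjective, which is part of the content of item (2) and guarantees that $\Aut_\ZZ(X)/\Aut_0(X)$ is all of $\Delta$ and not merely a subgroup. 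With these verifications the three asserted isomorphisms $\Aut_\QQ(X)=\Aut_\ZZ(X)=\Aut_B(X)$, $\Aut_0(X)\cong\Aut_B(\E)/\CC^*$, and $\Aut_\ZZ(X)/\Aut_0(X)\cong\Delta$ all follow.
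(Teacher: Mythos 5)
Your proposal is correct and follows essentially the same route as the paper: identify $\Aut_\QQ(X)=\Aut_\ZZ(X)$ via torsion-freeness of the cohomology, use the trivial action on $H^1(B,\ZZ)$ together with $g(B)\geq 2$ (Lefschetz) to get $\Aut_\ZZ(X)\subset\Aut_B(X)$, note the reverse inclusion, and then read $\Aut_0(X)$ and the component group off the exact sequence in item (2). The only difference is cosmetic: you are more explicit than the paper about why $\Aut_B(\E)/\CC^*$ is connected and hence equals the identity component, a point the paper leaves implicit in the phrase ``come from (2) above.''
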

\begin{proof}
The automorphism group $\Aut_\QQ(X) = \Aut_\ZZ(X) $ induces a trivial action on $H^1(X, \ZZ) = H^1(B,\ZZ)$. Since $g(B)\geq 2$, we infer that $\Aut_\ZZ(X)$ induces a trivial action on $B$. Thus $\Aut_\ZZ(X)\subset \Aut_B(X)$. The inclusion in the other direction is clear. The isomorphism $\Aut_0(X) \cong \Aut_B(\E)/\CC^*$ and $\Aut_\ZZ(X)/\Aut_0(X)\cong \Delta$ come from (2) above.
\end{proof}

\begin{theo}\label{thm: q geq 2}
Let  $f\colon X\cong \PP(\E)\rightarrow B$ be a $\PP^1$-bundle over a curve of genus $ g(B)\geq 2$. 
Then the equalities $\Aut_\QQ(X) = \Aut_{\ZZ}(X) = \Aut_B(X)$ and $\Aut_\#(X)=\Aut_*(X)=\Aut_0(X)$ hold. 

Thus $\Gamma_\QQ(X) = \Gamma_\ZZ(X) \cong \Delta$, while $\Ga_\sharp(X)$ and $\Gamma_* (X)$ are trivial, where $\Gamma_\QQ(X)$, $\Gamma_\ZZ(X)$, $\Ga_\sharp(X)$ and $\Gamma_*(X)$ are the groups of connected components of $\Aut_\QQ(X)$, $\Aut_\ZZ(X)$, $\Aut_\#(X)$ and $\Aut_*(X)$ respectively.
\end{theo}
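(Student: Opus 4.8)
The first chain of equalities $\Aut_\QQ(X)=\Aut_\ZZ(X)=\Aut_B(X)$ is exactly Corollary~\ref{cor: q geq 2}, so I would begin by recording that $\Aut_0(X)\cong\Aut_B(\E)/\CC^*$ is \emph{connected}: in each of the four cases (a)--(d) for $\Aut_B(\E)$ the quotient by the scalars $\CC^*$ is $\{\id\}$, $H_r/\CC^*$, $H_r'/\CC^*$ or $\PP\GL(2,\CC)$, all connected. Hence $\Aut_0(X)$ really is the identity component and $\Gamma_\QQ(X)=\Gamma_\ZZ(X)\cong\Delta$. Because of the ladder $\Aut_0(X)\vartriangleleft\Aut_*(X)\vartriangleleft\Aut_\sharp(X)\vartriangleleft\Aut_\ZZ(X)$ together with the connectedness of $\Aut_0(X)$, the remaining assertion $\Aut_\sharp(X)=\Aut_*(X)=\Aut_0(X)$ is equivalent to the single statement that the image of $\Aut_\sharp(X)$ under $\Aut_\ZZ(X)\twoheadrightarrow\Delta$ is trivial. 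Thus I would fix a nontrivial class $\ml\in\Delta$, represent it by some $\sigma\in\Aut_B(X)$, and aim to show that $\sigma$ is \emph{not} homotopic to $\id_X$.

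The structure of such a $\sigma$ is transparent on a double cover. Since $0\neq\ml\in\Delta\subset\Pic^0(B)[2]$, it defines a connected \'etale double cover $\pi\colon B'\to B$ with involution $\tau$, and $\E\cong\pi_*\sL$ for a line bundle $\sL$ on $B'$; consequently $X':=X\times_B B'\cong\PP(\sL\oplus\tau^*\sL)\to B'$, and after adjusting by an element of $\Aut_0(X)$ one may take $\sigma$ to be the involution whose fixed locus $\Fix(\sigma)$ is the image of the irreducible bisection $B'\hookrightarrow X$. The content of $\ml$ is precisely the $\ZZ/2$-monodromy of the two fibrewise fixed points of $\sigma$ as one moves around loops in $B$, and this monodromy is \emph{invisible to cohomology}: we already know $\sigma\in\Aut_\ZZ(X)$ acts trivially on $H^*(X;\ZZ)$, and since $\sigma$ covers $\id_B$ it also acts trivially on $\pi_1(X)\cong\pi_1(B)$ and on $\pi_2(X)\cong\ZZ$.

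I want to stress here why the harmonic-map technology of Section~\ref{sec: prelim} does \emph{not} apply: any holomorphic map from $X$ to a compact K\"ahler manifold of nonpositive sectional curvature contracts the rational fibres and hence factors through $f\colon X\to B$, so such a map --- in particular the Albanese map --- is blind to the fibrewise class $\ml\in\Delta$. Thus Principle~\ref{prin: rigidity} cannot be invoked, and $\ml$ must be separated from $0$ by a genuinely topological, secondary invariant. My proposal is to use the mapping torus $M_\sigma:=X\times[0,1]/(x,0)\sim(\sigma x,1)$. As $\sigma$ is fibrewise over $\id_B$, $M_\sigma$ is an oriented $S^2$-bundle over $B\times S^1$, and a direct computation should identify the $H^1(B;\ZZ/2)\otimes H^1(S^1;\ZZ/2)$-component of its second Stiefel--Whitney class with the suspension of $\ml$; for the product $M_{\id_X}=X\times S^1$ this component vanishes. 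A homotopy $\sigma\simeq\id_X$ yields a homotopy equivalence $M_\sigma\simeq X\times S^1$, so the whole argument reduces to recovering this $\ZZ/2$-component from the homotopy type of the total space.

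This last point is the main obstacle: $w_2$ of an $S^2$-bundle is a priori an invariant of the bundle, not of the bare homotopy type of its total space, and the homotopy $\sigma\simeq\id_X$ is \emph{not} fibre-preserving, so it does not directly identify the two bundle structures over $B\times S^1$. To overcome it I would exploit that $B$ is aspherical with $g(B)\geq 2$ and that the fibre is a $2$-sphere with trivial $\pi_1(B)$-action on $\pi_2$: running the Gysin/transgression sequence of the $S^2$-fibration (equivalently, comparing the two Postnikov systems) ought to show that the offending $\ZZ/2$-class is detected by the cup-product structure on $H^*(M_\sigma;\ZZ/2)$ together with the fundamental class, hence by the homotopy type alone. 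Establishing this recovery statement --- that for $S^2$-bundles over $B\times S^1$ with $g(B)\geq 2$ the relevant component of $w_2$ is a homotopy invariant of the total space --- is exactly the crux. Once it is in hand, $\sigma\simeq\id_X$ forces $\ml=0$, the image of $\Aut_\sharp(X)$ in $\Delta$ is trivial, and the squeezed ladder gives $\Aut_\sharp(X)=\Aut_*(X)=\Aut_0(X)$; thus $\Gamma_\sharp(X)$ and $\Gamma_*(X)$ are trivial while $\Gamma_\QQ(X)=\Gamma_\ZZ(X)\cong\Delta$.
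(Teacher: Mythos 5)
Your reduction --- via Corollary~\ref{cor: q geq 2} and the connectedness of $\Aut_B(\E)/\CC^*$ --- to the single statement that $\Aut_\sharp(X)$ maps trivially to $\Delta$ is exactly the paper's reduction, and your mapping-torus reformulation, including the identification of the $H^1(B;\ZZ/2\ZZ)\otimes H^1(S^1;\ZZ/2\ZZ)$-component of $w_2(M_\sigma)$ with the class $\delta(\sigma)\in\Hom(\pi_1(B),\ZZ/2\ZZ)$ of $\ml$, is correct. But the argument stops exactly where the theorem lives: the ``recovery statement'' --- that this component of $w_2$ is an invariant of the bare homotopy type of the total space --- is never proved, and you flag it yourself as the crux. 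Worse, the two tools you name for proving it would demonstrably fail. For every bundle in sight the Euler class $e=\beta(w_2)\in H^3(B\times S^1;\ZZ)$ vanishes, because that group is torsion-free; hence the transgression in the Gysin sequence is zero with any coefficients, all these $S^2$-bundles have the same cohomology additively, and the two-stage Postnikov data of the total space ($\pi_1$, $\pi_2\cong\ZZ$ with trivial action, $k$-invariant given by $e=0$) is identical for all of them. The invariant you need is genuinely multiplicative, and extracting it requires an argument you do not give: for instance, lift the structure group to $U(2)$ (possible since $H^2(B\times S^1;\ZZ)\to H^2(B\times S^1;\ZZ/2\ZZ)$ is onto), write $H^*(M_\sigma;\ZZ/2\ZZ)\cong H^*(B\times S^1;\ZZ/2\ZZ)[\xi]/(\xi^2+w_2\xi+c_2)$ by Leray--Hirsch, note that any homotopy equivalence of total spaces preserves the subring generated by degree-one classes (this is where $g(B)\geq 2$ is used: $H^*(B\times S^1;\ZZ/2\ZZ)$ is generated in degree one), so that $w_2$ is transformed by a ring automorphism $\phi$ of the base cohomology, and finally show that no such $\phi$ can carry $w_2(X\times S^1)$ --- which is $0$ or the generator of $H^2(B;\ZZ/2\ZZ)$ according to the parity of $\deg\E$ --- to a class with nonzero mixed component. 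None of this is in your text, and the last step in the odd-degree case is itself a nontrivial computation with ring automorphisms.

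For comparison, the paper gets around the same obstacle (homotopies are not fibre-preserving) with a different and shorter input: after trivializing $X$ differentiably as $B\times S^2$ (degree-even case), an element $\sigma\in\Aut_B(X)$ becomes a differentiable map $s\colon B\to\PP\SL(2,\CC)$ with $\pi_1(s)=\delta(\sigma)$, and a free homotopy $\id_X\simeq\sigma$, composed with the projection to the fibre coordinate, null-homotopes the induced map $h\colon B\to \mathrm{ContMaps}(\PP^1,\PP^1)_1$ into the space of degree-one continuous self-maps of $\PP^1$; Segal's theorem \cite{segal}, that the inclusion of $\PP\SL(2,\CC)$ into this mapping space is a homotopy equivalence up to dimension $1$, then forces $\delta(\sigma)=0$. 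The odd-degree case is handled by passing to a suitably chosen \'etale double cover of $B$ on which $\delta(\sigma)$ survives pullback. So your mapping-torus strategy is a legitimate alternative framework, but as written it is a plan whose central step is missing, not a proof.
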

\begin{proof}
The first assertion is from Corollary~\ref{cor: q geq 2}, and we have a short exact sequence of groups
\[
1\rightarrow \Aut_0(X) \rightarrow \Aut_B (X) \rightarrow \Delta \rightarrow 1.
\]

We first consider the case where  $\deg \sE $ is even, so that we can assume that $\sE$ has zero degree, hence $\sE$ is a differentiably trivial vector bundle, and a fortiori  $X=\PP(\sE)$ is a differentiably trivial  $S^2$-bundle over $B$. 

That is, there is a diffeomorphism $\varphi\colon X\rightarrow B\times S^2$ such that the following diagram is commutative:
\[
\begin{tikzcd}
X \arrow[rr, "{\varphi}"]\arrow[rd, "f"'] & & B\times S^2 \arrow[ld, "\pr_1"] \\
 & B&
\end{tikzcd}
\]
where $\pr_1$ denotes the projection to the first factor.

We need to show that $\Aut_\sharp(X) = \Aut_0(X)$.

We 
have that $\Aut_{\QQ} (X) = \Aut_B(X)$ maps onto $\Delta$ with kernel $\Aut_0(X) $.
$\Aut_B(X)$ consists of sections $\s$ of the sheaf
$\PP GL(2, \hol_B) = \PP SL(2, \hol_B)$ and $\De$ measures the obstruction to lifting to a section of $ SL(2, \hol_B)$.

This obstruction is topological. $\Delta$ is a group of line bundles of 2 -torsion, hence it is a group of maps 
$ \delta(\sigma)\colon \pi_1(B)  \ra \ZZ/2\ZZ$. 

Since  $ X$ is differentiably trivial,  to $\s$ corresponds a diffeomorphism $$H  \colon B \times  \PP^1 \ra B \times  \PP^1,$$
linear on the fibres $\PP^1$, hence a 
 differentiable map
$s\colon B \ra \PP SL(2, \CC)$, and this map is liftable to $s'\colon B \ra  SL(2, \CC)$ if and only if 
$$\pi_1(s)\colon \pi_1(B)  \ra \pi_1( \PP SL(2, \CC)) =   \ZZ/2\ZZ$$
is trivial. This homomorphism is exactly $\de(\s)$, as it is easy to verify.

The diffeomorphism $H  \colon B \times  \PP^1 \ra B \times  \PP^1$,   via the second projection,  gives a continuous map 
$ h\colon B \ra \mathrm{ContMaps}  (\PP^1, \PP^1)_1$ of $B$ 
into  the space of continous selfmaps of degree 1 on $\PP^1$.

By a result of Graeme Segal, \cite{segal}, this space is homotopically equivalent  up to dimension $1$
to $\PP SL(2, \CC)$.

Hence, if a map $H$ is  homotopic to the identity, then also $h$ is homotopic to the identity, hence $h$  induces a trivial  
homomorphism of fundamental groups 
 $ \pi_1(B)  \ra \ZZ/2\ZZ$.

The conclusion is that 
 $\s \in \Aut_\#(X)$ maps trivially 
 to $\Delta$, hence $\s \in \Aut_0(X)$, see corollary \ref{cor: q geq 2}.
 
In  the case where  $\sE$ is of odd degree, take an \'etale double cover $B' \ra B$, so that the pull back 
$X'$ of $X$ is diffeomorphic to $\PP^1 \times B'$.

 We have that $\s \in \Aut_{\ZZ} (X)$ lifts to $X'$,
since $\s$ acts trivially on $H^1 (B, \ZZ/2\ZZ)$, and lifts to $\s' \in \Aut_{B'}(X')$. 

Observe that the pull-back map
$$H^1 (B, \ZZ/2\ZZ)  \ra H^1 (B', \ZZ/2\ZZ)$$ has kernel $\cong \ZZ/2$ generated by the class of the \'etale covering
$B' \ra B$. 
 
We have  that $\de(\s') $ is the pull-back of $\de(\s)$, and if $\de(\s) \neq 0 $ we can then choose $B' \ra B$ appropriately so that  $\de(\s') \neq 0$.

By the same token, if $\s$ is  homotopic to the identity, also $h' $ is  homotopic to the identity, hence to $\s$ 
corresponds the trivial  element in $\De$, and we are done also in this case argueing by contradiction.
\end{proof}

For completeness we summarize here some results by Maruyama.

\begin{thm}[{\cite[Theorem 3 and Remark 6]{Ma71}}]
Let $f\colon X=\PP(\E)\rightarrow B$ be a $\PP^1$-bundle over a smooth curve $B$ of genus  $q (X) \leq 1$. 

Then the following holds.
\begin{enumerate}[leftmargin=*]
\item Suppose that $q(X)=0$. If $X=\F_e$ with $e>0$, then $\Aut(X) =\Aut_0(X) $, and , more precisely, 
we have an exact sequence
\[
1\rightarrow \bar H_{e+1} \rightarrow\Aut(X) \rightarrow \PP\GL(2,\C) \rightarrow 1.
\]
where $\bar H_{e+1}:=H_{e+1}/\CC^*$; see the paragraph above corollary \ref{cor: q geq 2} for the definition of $H_{e+1}$.

\item Suppose that $B$ is elliptic and $X$ is decomposable. Then we have an exact sequence:
\[
1\rightarrow \Aut_B(X)\rightarrow \Aut(X) \rightarrow H \rightarrow 1
\]
where $H=\{\sigma\in\Aut(B)\mid \sigma^*(\ml^{\otimes 2}\otimes \det(\me)^{-1})\cong \ml^{\otimes 2}\otimes \det(\me)^{-1}\}$. Moreover, the following holds.
\begin{enumerate}[leftmargin=*]
\item If $e>0$ then $\Aut_B(X) =\Aut_0(X)\cong \bar H_r'$, where $r=h^0(\ml^{\otimes 2}\otimes \det(\me)^{-1})$ and $\bar H_r'=H_r'/\CC^*$ 
(see the paragraph above corollary \ref{cor: q geq 2} for the definition of $H_r'$.)

and we have an exact sequence
\[
1\rightarrow \Aut_B(X)\rightarrow \Aut_\ZZ(X) \rightarrow \Aut_0(B)[e] \rightarrow 1
\]
where $\Aut_0(B)[e]\cong(\ZZ/e\ZZ)^2$ denotes the $e$-torsion part of $\Aut_0(B)$.
\item If $e=0$ and $X$ has only one minimal section, then $\Aut_0(X) = \Aut_\ZZ(X)$, $\Aut_B(X) \cong \bar H_r'$, and we have an exact sequence
\[
1\rightarrow \Aut_B(X)\rightarrow \Aut_0(X) \rightarrow \Aut_0(B) \rightarrow 1
\]
\item If $e=0$ and $X$ has exactly two  minimal sections $C_1$ and $C_2$, then $\Aut_0(X)\cong X\setminus(C_1\cup C_2)$, where the latter is with the natural algebraic group structure, and we have $\Aut_B(X)=\Aut_0(X)\rtimes \ZZ/2\ZZ$, where $\ZZ/2\ZZ$ interchanges the two sections $C_1$ and $C_2$.
\item If $X=\PP^1\times B$ then $\Aut(X) = \Aut(\PP^1)\times \Aut(B) = \PP\GL(2, \CC)\times \Aut(B)$, and $\Aut_\ZZ(X)=\Aut_0(X) = \PP\GL(2, \CC)\times \Aut_0(B)$.
\end{enumerate}
\item Suppose that $B$ is elliptic and $X$ is indecomposable.
\begin{enumerate}
\item If $e=0$ then we have an exact sequence
\[
1\rightarrow \CC^* \rightarrow \Aut(X) \rightarrow \Aut(B) \rightarrow 1.
\]
In this case, $\Aut_\ZZ(X) = \Aut_0(X)\cong X\setminus C$ where $C\subset X$ is the unique minimal section, it is a nontrivial extension of $\Aut_0(B)$ by $\CC^*$. 
\item If $e=1$ then we have an exact sequence
\[
1\rightarrow \Delta \rightarrow \Aut(X) \rightarrow \Aut(B) \rightarrow 1.
\]
where $\Delta=\Pic^0(B)[2]\cong(\ZZ/2\ZZ)^2$. Furthermore, $\Aut_\ZZ(X)=\Aut_0(X)$ and $\Delta$ is contained in $\Aut_0(X)$, so there is an exact sequence
\[
1\rightarrow \Delta \rightarrow \Aut_0(X) \rightarrow \Aut_0(B) \rightarrow 1.
\]

\end{enumerate}
\end{enumerate}
\end{thm}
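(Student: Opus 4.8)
The plan is to read this classification off the two structural exact sequences recalled above, fed by the description (a)--(d) of $\Aut_B(\E)$. First I would exploit
\[
1\rightarrow \Aut_B(X) \rightarrow \Aut(X)\rightarrow \Aut(B),
\]
whose image consists of the $\gamma$ with $\gamma^*\me\cong\me\otimes L$ for some $L\in\Pic^0(B)$, together with
\[
1\rightarrow \Aut_B(\E)/\CC^* \rightarrow \Aut_B(X) \rightarrow \Delta\rightarrow 1.
\]
The cohomological input will be uniform across the cases: since $H^2(X,\ZZ)=\ZZ F\oplus\ZZ\Sigma$ and $H^1(X,\ZZ)=H^1(B,\ZZ)$, Principle~\ref{prin: red fibre} forces any $\sigma\in\Aut_\ZZ(X)$ to induce on $B$ either the identity or, when $g(B)=1$, a translation; conversely any lift $\tilde\tau$ of a translation already lies in $\Aut_\ZZ(X)$, because $\tilde\tau F=F$ and $\tilde\tau\Sigma=\Sigma+kF$ with $(\Sigma+kF)^2=\Sigma^2$ forcing $k=0$. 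This is the mechanism I would use to separate $\Aut_\ZZ(X)$ from $\Aut_\QQ(X)$ and, together with connectedness of the fibre group, to locate $\Aut_0(X)$.

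For $q(X)=0$ one has $B=\PP^1$, so $\Pic^0(B)=0$, $\Delta=0$, and $\Aut_B(X)=\Aut_B(\E)/\CC^*$. For $\F_e=\PP(\hol_{\PP^1}\oplus\hol_{\PP^1}(e))$ with $e>0$ the two summands are nonisomorphic and $\Hom(\hol(e),\hol)=0$, so $\Aut_B(\E)$ is block upper triangular with unipotent part $H^0(\PP^1,\hol(e))$ of dimension $e+1$ by Riemann--Roch; this is the relevant case of (a)--(d) and produces the fibre group asserted in part (1). Since every $\gamma\in\PP\GL(2,\CC)=\Aut(\PP^1)$ satisfies $\gamma^*(\hol\oplus\hol(e))\cong\hol\oplus\hol(e)$, the map to $\PP\GL(2,\CC)$ is surjective, giving the displayed sequence; as both terms are connected, I would conclude $\Aut(X)=\Aut_0(X)$.

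For $B$ elliptic I would single out the invariant $M:=\ml^{\otimes2}\otimes\det(\me)^{-1}$, of degree $e$, and identify the image $H$ of $\Aut(X)\rightarrow\Aut(B)$ with its pullback stabilizer, which is (2). The four subcases then follow by feeding $e$ and the splitting type into (a)--(d): for $\me=\ml_1\oplus\ml_2$ with $e>0$ one is in case (c), $\Aut_B(X)\cong\bar H_r'$, and among translations $\tau_a$ the condition $\tau_a^*M\cong M$ cuts out the kernel of the isogeny $a\mapsto\tau_a^*M\otimes M^{-1}$ (multiplication by $e$ on $B$), namely $\Aut_0(B)[e]\cong(\ZZ/e\ZZ)^2$, which is the $\Aut_\ZZ$ sequence of (2)(a); the value $e=0$ with one, resp.\ two, minimal sections distinguishes the indecomposable-type bundle from $\me=\ml\oplus\ml$ (case (d), $\Aut_B(\E)=\GL(2,\CC)$), the latter contributing the extra $\ZZ/2\ZZ$ swapping $C_1,C_2$ and the identification $\Aut_0(X)\cong X\setminus(C_1\cup C_2)$; and $X=\PP^1\times B$ is the totally split case where the two factors decouple. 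For $\me$ indecomposable, Atiyah's classification gives $e\in\{0,1\}$; here one lands in case (b), $\Aut_B(\E)\cong H_r$ with $r=h^0(M)$, and evaluating $r$ and $\Delta=\{L\in\Pic^0(B)\mid\me\otimes L\cong\me\}$ yields the kernels $\CC^*$ (for $e=0$) and $\Delta=\Pic^0(B)[2]\cong(\ZZ/2\ZZ)^2$ (for $e=1$). In both indecomposable cases $\Aut_\ZZ(X)=\Aut_0(X)$ and $\Delta\subset\Aut_0(X)$ should come out, because $\Aut_B(X)$ is connected and base-translations are cohomologically trivial by the first paragraph.

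I expect the main obstacle to be twofold. First, pinning down the image $H\subset\Aut(B)$ precisely --- deciding exactly which translations and which finite-order automorphisms of the elliptic curve lift --- which rests on a careful analysis of $\gamma^*\me\cong\me\otimes L$ and of the behaviour of the degree-$e$ invariant $M$ under pullback, and is where the groups $(\ZZ/e\ZZ)^2$ and $(\ZZ/2\ZZ)^2$ genuinely enter. Second, identifying $\Aut_0(X)$ as an algebraic group in the borderline cases --- the nonsplit $\CC^*$- and $\Delta$-extensions of $\Aut_0(B)$, and the complement $X\setminus(C_1\cup C_2)$ with its induced group law --- which needs more than the exact sequences alone. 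By contrast, once one knows that elliptic translations act trivially on $H^*(X,\ZZ)$, separating $\Aut_\ZZ(X)$ from $\Aut_\QQ(X)$ reduces to the torsion condition $\tau_a^*M\cong M$ and should be comparatively routine.
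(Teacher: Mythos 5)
There is nothing in the paper to compare your proposal against: this theorem is explicitly presented as a summary of Maruyama's results (cited as [Ma71, Theorem 3 and Remark 6]) and no proof is given. What the paper does supply, immediately before the statement, is precisely the scaffolding you build on: the sequences $1\to\Aut_B(X)\to\Aut(X)\to\Aut(B)$ and $1\to\Aut_B(\E)/\CC^*\to\Aut_B(X)\to\Delta\to1$, and the list (a)--(d) for $\Aut_B(\E)$. Your plan --- feed these with the invariant $M=\ml^{\otimes2}\otimes\det(\me)^{-1}$, use the fact that lifts of translations of $B$ act trivially on $H^*(X,\ZZ)$, and use connectedness to locate $\Aut_0(X)$ --- is the natural route, and it parallels how the paper itself argues in the genus $\geq 2$ case (Corollary~\ref{cor: q geq 2} and Theorem~\ref{thm: q geq 2}). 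Your treatment of part (1), of part (2)(a) (the isogeny $a\mapsto\tau_a^*M\otimes M^{-1}$ with kernel $\Aut_0(B)[e]\cong(\ZZ/e\ZZ)^2$), and of the kernel $\Delta=\Pic^0(B)[2]$ in part (3)(b) is sound in outline.

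However, the $e=0$ borderline cases --- which you yourself flag as the crux --- contain genuine errors, not just deferred work. First, the bookkeeping is wrong: a decomposable bundle over an elliptic curve with $e=0$ is either $\ml_1\oplus\ml_2$ with $\ml_1\not\cong\ml_2$, which has \emph{exactly two} minimal sections (this is case (2)(c), where $\Aut_0(X)\cong X\setminus(C_1\cup C_2)$ is the multiplicative extension of $B$ and the section-swapping involution lives), or $\ml\oplus\ml$, i.e.\ $X\cong\PP^1\times B$, where $\Aut_B(X)\cong\PP\GL(2,\CC)$ is connected and no extra $\ZZ/2\ZZ$ arises; a decomposable bundle never has exactly one minimal section, that happens only for Atiyah's nonsplit self-extension $\me_0$ of $\hol_B$. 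So your sentence identifying ``one minimal section'' with ``the indecomposable-type bundle'' inside part (2) contradicts the decomposability hypothesis, and you attach the $\ZZ/2\ZZ$ to the wrong subcase. Second, and more seriously, your claim that for indecomposable $\me$ with $e=0$ ``evaluating $r$ and $\Delta$ yields the kernel $\CC^*$'' is not a computation but a pattern-match to the target statement: there $\ml=\hol_B$, $r=h^0(\hol_B)=1$, so $\Aut_B(\E)\cong H_1$, and $\Delta$ is trivial because $\hol_B$ is the unique degree-$0$ invertible subsheaf of $\me_0$; hence the kernel $\Aut_B(X)\cong H_1/\CC^*$ is the \emph{additive} group $\GG_a\cong\CC$ (consistently, $X\setminus C$ is a $\GG_a$-torsor over $B$), not $\CC^*$. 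An honest run of your own machinery produces $\CC$ and would force you to confront this discrepancy with the displayed sequence in (3)(a), rather than ``derive'' it. Together with the unproved inclusion $\Delta\subset\Aut_0(X)$ in (3)(b), and the fact that for $e=0$ the image $H$ is a priori only the stabilizer of the unordered pair $\{M,M^{-1}\}$ (inversion on $B$ lifts, swapping the two sections), these are exactly the points where the content of the theorem lies; as written, the proposal is an outline whose delicate cases are wrong or missing.
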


\subsection{Case $\kappa(X)=0$}
 In this section, we treat the surfaces $X$ with $\kappa(X)=0$. 
The following is a list of known facts:

\begin{enumerate}[leftmargin=*]
\item If $X$ is K3 surface, then $\Aut_\QQ(X)=\{\id_X\}$ by \cite{br}.

\item If $X$ is an Enriques surface, then $|\Aut_\QQ(X)|\leq 4$ and $|\Aut_\ZZ(X)|\leq 2$, and both bounds are sharp \cite{MN84}. The fact that $\Aut_\ZZ(X)$ can be nontrivial for an Enriques surface contradicts the last statement of \cite[Theorem~2.2]{Pe80}.

\item If $X$ is an Abelian surface, then $\Aut_{\QQ}(X) = \Aut_0(X)\cong X$.
\end{enumerate}

Suppose now that $X$ is a hyperelliptic surface  (bielliptic surface in the notation of \cite{Be96}). These are the prototype examples
of an SIP of unmixed type, and were classified by Bagnera and de Franchis, resp.~ by Enriques-Severi (\cite{bdf}, \cite{es9}, \cite{es10}).
 These are $X=(F \times E)/\De_G$ with $E$ and $F$ elliptic curves and $G$ acting freely on $E$, while  $g(F/G)=0$. 
 
 We can apply 
 Principle  \ref{SIPU} 
 and obtain the following theorem.

\begin{theo}\label{thm: hyperelliptic}
Let $X=(F \times E)/\Delta_G$ be an hyperelliptic  surface in the  above notation.

 Then  $\Aut_\ZZ(X) \cong 
 E
 = \Aut_0(X)$
and $\Aut_\QQ(X)/\Aut_\ZZ(X)$ is isomorphic to one of following groups:
\[
1,\, \ZZ/2\ZZ,\, ( \ZZ/2\ZZ)^2,\,  \mathfrak S_3,\, 
D_4, \,
\mathfrak A_4
\]
where $\mathfrak S_3$ denotes the symmetric group on three elements, $D_4$ is the dihedral group of order $8$, and $\mathfrak A_4$ is the alternating group on $4$ elements. 

Moreover, $\Aut_\QQ(X)/\Aut_\ZZ(X)\cong \mathfrak A_4$ if and only if $F=F_\omega$ and $G\cong\ZZ/2\ZZ$. In particular, $|\Aut_\QQ(X)/\Aut_\ZZ(X)|
\leq 12$ and the equality is attained if and only if $F=F_\omega$, the equianharmonic (Fermat) elliptic curve,  and $G\cong\ZZ/2\ZZ$ ($F_\omega = \CC/(\ZZ\oplus \ZZ\omega)$ with $\omega$ a primitive 3rd root of unity).
\end{theo}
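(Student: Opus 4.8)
The plan is to reduce everything to a computation inside $\Aut(F)$ by means of Principle~\ref{SIPU}, and then to run through the Bagnera--de Franchis list of the seven families of hyperelliptic surfaces. First I would set up the group-theoretic dictionary. Writing automorphisms of the elliptic curve $F$ as pairs $(\zeta,c)\colon z\mapsto \zeta z+c$ with $\zeta\in\mu_k:=\Aut(F,0)$ (so $k\in\{2,4,6\}$) and $c\in F$, two such elements $(\zeta,c),(\xi,d)$ commute if and only if $(\zeta-1)d=(\xi-1)c$. By Principle~\ref{SIPU} every class in $\Aut_\QQ(X)$ is represented by a pair $h=(h_1,h_2)\in\Aut(F)\times\Aut(E)$ normalising $\De_G$; since $H^1(E,\QQ)^G=H^1(E/G,\QQ)=H^1(E,\QQ)$ and $G$ acts on $E$ by translations, part (I) forces $h_2$ to be a translation $t_b$. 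As $t_b$ commutes with the translation subgroup $G\subset\Aut(E)$, the condition $(h_1,t_b)\in N_{\De_G}$ becomes $h_1 g h_1^{-1}=g$ for all $g\in G$, i.e. $h_1\in Z:=Z_{\Aut(F)}(G)$. Because $H^1(F,\QQ)^G=H^1(\PP^1,\QQ)=0$ and $H^2(X,\QQ)$ is spanned by the two fibre classes (both preserved), no further cohomological constraint appears, so
\[
\Aut_\QQ(X)\cong (Z\times T_E)/\De_G,\qquad \De_G\subset Z\times T_E,
\]
where $T_E\cong E$ is the translation group and the inclusion uses that every hyperelliptic $G$ is abelian. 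Dividing by $\Aut_0(X)=(G\times T_E)/\De_G$ will then yield the clean identification $\Aut_\QQ(X)/\Aut_\ZZ(X)\cong Z/G$, once part $(A)$ below is known.

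Next I would prove $(A)$, namely $\Aut_\ZZ(X)=\Aut_0(X)\cong E$. The homomorphism $b\mapsto [(\id_F,t_b)]$ is injective because $G$ acts faithfully on $F$, and its image is a connected $1$-dimensional subgroup, hence equals $\Aut_0(X)$ (as $q(X)=1$); this already gives $\Aut_0(X)\cong E\subseteq\Aut_\ZZ(X)$. For the reverse inclusion I would apply Principle~\ref{prin: multiplefibres} to the elliptic fibration $p\colon X\to F/G=\PP^1$, whose multiple fibres lie over the branch points of $F\to F/G$ with multiplicities equal to the stabiliser orders, i.e. to the signature. The possible signatures are $(2,2,2,2)$, $(3,3,3)$, $(2,4,4)$, $(2,3,6)$; in each there are at least three multiple fibres and never exactly two of multiplicity $2$ with all others odd, so the exceptional alternative of Principle~\ref{prin: multiplefibres} never occurs and any $\sigma\in\Aut_\ZZ(X)$ fixes every multiple fibre. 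Hence $\bar h_1$ fixes $\geq 3$ points of $\PP^1$, so $\bar h_1=\id$, $h_1\in G$, and $\sigma\in\Aut_0(X)$.

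Then I would compute $Z/G$ family by family, using the commutation criterion. The centraliser always has the shape $Z=\mu_k\ltimes F[\lambda]$, where $F[\lambda]=\ker(\lambda\colon F\to F)$ with $\lambda=\zeta_0-1$ for a generating rotation $\zeta_0$ of $G$, the groups $F[2],F[\omega-1],F[i-1],F[\zeta_6-1]$ having orders $4,3,2,1$; the extra translation generator in the non-cyclic families cuts this down further. For $G\cong\ZZ/2$ (so $\zeta_0=-1$) one gets $Z/G\cong F[2]\rtimes(\mu_k/\langle -1\rangle)$, which is $(\ZZ/2)^2$, $D_4$, or $\mathfrak A_4$ according as $F$ is general, $F=F_i$, or $F=F_\omega$; for $G\cong\ZZ/3$ on $F_\omega$ one gets $\mathfrak S_3$; the families $\ZZ/4,\ \ZZ/6,\ (\ZZ/3)^2,\ \ZZ/4\times\ZZ/2$ give $\ZZ/2$ or the trivial group, while $(\ZZ/2)^2$ gives $\ZZ/2$ or $(\ZZ/2)^2$. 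The $\mathfrak A_4$ identification is the heart of the matter: on $F_\omega$ the group $\mu_6$ acts on $F[2]\cong(\ZZ/2)^2$ through the order-$3$ cyclic permutation of the three nonzero $2$-torsion points (with $-1$ acting trivially), so $F[2]\rtimes(\mu_6/\langle -1\rangle)=(\ZZ/2)^2\rtimes\ZZ/3=\mathfrak A_4$. Comparing orders, $12$ is the maximum and is attained exactly by this case, giving the equivalence ``$\mathfrak A_4$ iff $F=F_\omega$ and $G\cong\ZZ/2$''.

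The main obstacle I anticipate is twofold: organising the Bagnera--de Franchis data so that each $G$-action on $F$, including the translation parts of generators, is pinned down, and then identifying the abstract isomorphism type of each semidirect product $Z/G$ rather than merely its order. The delicate point is the action of $\mu_k$ on the torsion group $F[\lambda]$, which is what distinguishes $(\ZZ/2)^2$, $D_4$ and $\mathfrak A_4$ inside the single family $G\cong\ZZ/2$; everything else is finite bookkeeping once the commutation criterion $(\zeta-1)d=(\xi-1)c$ is in place.
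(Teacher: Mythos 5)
Your proposal is correct, and its overall architecture is the same as the paper's: reduce everything to a computation inside $\Aut(F)$ via Principle~\ref{SIPU}, prove $\Aut_\ZZ(X)=\Aut_0(X)\cong E$ by applying Principle~\ref{prin: multiplefibres} to the fibration $X\to F/G=\PP^1$ (your check that the signatures $(2,2,2,2)$, $(3,3,3)$, $(2,4,4)$, $(2,3,6)$ never fall into the exceptional case is exactly what part (III) of Principle~\ref{SIPU} accomplishes), and then run through the seven Bagnera--de Franchis families. The genuine difference lies in the pivotal identification, and there your route is the correct one. You identify $\Aut_\QQ(X)/\Aut_\ZZ(X)$ with the \emph{centralizer} quotient $Z_{\Aut(F)}(G)/G$: since $h_2$ must be a translation and the translation action of $G$ on $E$ is faithful, conjugation by $(h_1,t_b)$ fixes the second component of every element of $\De_G$, so normalizing $\De_G$ forces $h_1gh_1^{-1}=g$ for each individual $g\in G$ (do state explicitly that faithfulness of $G$ on $E$ is what pins down $g'=g$; it is the crux); this is the same argument as in part (II) of Principle~\ref{SIPU} with the roles of the two factors exchanged. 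The paper instead asserts the quotient is the \emph{normalizer} quotient $N_G/G$, which does not follow and is actually wrong in the two families where $N_G\neq Z_G$: for $G\cong\ZZ/4\ZZ\oplus\ZZ/2\ZZ$ and $G\cong(\ZZ/3\ZZ)^2$ one finds $Z_G=G$, so the correct quotient is trivial, whereas the paper's case list reports $\ZZ/2\ZZ$ and $\mathfrak S_3$ there (and in the case $G\cong(\ZZ/2\ZZ)^2$ the paper labels as ``$N_G$'' what is really the centralizer $F[2]\rtimes\{\pm1\}$, the honest normalizer being larger). A sanity check vindicating your values: for $G\cong(\ZZ/3\ZZ)^2$ the three reduced multiple fibres of $X\to\PP^1$ are quotients of $E$ by three distinct order-$3$ subgroups, hence pairwise non-isomorphic for general $E$, so no automorphism of $X$ can permute them at all, let alone a cohomologically trivial one. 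None of this affects the statement being proved: your values all lie in the asserted list, $\mathfrak A_4$ still occurs exactly for $G\cong\ZZ/2\ZZ$ and $F=F_\omega$, and $12$ remains the maximal order. One small polish: you do not need the dimension count behind $\Aut_0(X)=E$; once the multiple-fibre argument gives $\Aut_\ZZ(X)\subseteq E$, the chain $E\subseteq\Aut_0(X)\subseteq\Aut_\ZZ(X)\subseteq E$ closes up by itself.
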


\begin{proof}

By Principle \ref{SIPU}, (I) and (III),  $\Aut_{\QQ}(X) \subset N_{\De_G} / \De_G$ corresponds to the automorphisms $h = (h_1, h_2)$
such that $h_2$ is a translation, and $h_1 $ acts trivially on $H^1 (F, \ZZ)^G$. 

While $\Aut_{\ZZ}(X) \subset N_{\De_G} / G$
corresponds to the subgroup  $$ G \times E < \Aut(F) \times \Aut(E).$$

 Since $H^1(F/G, \ZZ) = 0$, $\Aut_{\QQ}(X)/\Aut_\ZZ(X) \cong N_G / G$, where 
$N_G$ is the  normalizer of $G$ in $\Aut(F)$.

From the  identification
\[
\Aut_{\QQ}(X)/\Aut_\ZZ(X) \cong \{\gamma\in \Aut(F)\mid \gamma G=G\gamma\}/G = N_G/G.
\]

@e proceed as follows to determine $N_G/G$: consider the split short exact sequence
\[
0\rightarrow \Aut_0(F) 
\cong  F \rightarrow \Aut(F) \xrightarrow{\varphi} A \rightarrow 0
\]
where $A\subset \Aut(F)$ is the subgroup preserving the group structure of $F$. 

For convenience of notation we write $F$ instead of $ \Aut_0(F) $. Restricting to $G$ and $N_G$ we obtain short exact sequences
\[
0\rightarrow G\cap F \rightarrow G \rightarrow \varphi(G)\rightarrow 1
\]
and 
\[
0\rightarrow N_G\cap F \rightarrow N_G \rightarrow \varphi(N_G) \rightarrow 1.
\]
Therefore, we have a short exact sequence
\[
0\ra \left(N_G\cap F \right)/\left(G\cap F\right) \rightarrow N_G/G \rightarrow \varphi(N_G)/\varphi(G) \rightarrow 1.
\]

Next we divide the discussion into cases according to \cite[List VI.20]{Be96}. 
\begin{enumerate}[leftmargin=*]
    \item[(1)] $G\cong\ZZ/2\ZZ$ acting on $F$ by $x\mapsto -x$. 
    In this case, $N_G\cap F \cong(\ZZ/2\ZZ)^2$ consists of translations by $2$-torsion points, and 
    \[
    \varphi(N_G)=
    \begin{cases}
    \langle x\mapsto -x \rangle \cong \ZZ/2\ZZ & \text{ if } F\neq F_i, F_\omega\\
    \langle x\mapsto ix \rangle \cong\ZZ/4\ZZ &\text{ if } F= F_i \\
     \langle x\mapsto -\omega x \rangle \cong \ZZ/6\ZZ & \text{ if } F=F_\omega
    \end{cases}
    \]
    For $G$ we have 
    \[
    G\cap F  =\{ 0\}\text{ and }\varphi(G) =G=\langle x\mapsto -x\rangle \cong\ZZ/2\ZZ
    \]
We have thus a split short exact sequence
\[
    0\rightarrow N_G\cap F \rightarrow N_G/G \rightarrow \varphi(N_G)/G \rightarrow 0
    \]
and it is now easy to determine  the corresponding semidirect product
\[
N_G/G \cong 
  \begin{cases}
   (\ZZ/2\ZZ)^2 & \text{ if } F\neq F_i, F_\omega\\
   D_4 &\text{ if } F= F_i \\
    \mathfrak  A_4 & \text{ if } F=F_\omega
    \end{cases}
\]
where $D_4$ denotes the dihedral group of order $8$ and $\mathfrak A_4$ is the alternating group on $4$ elements.
    \item[(2)] $G\cong \ZZ/2\ZZ\oplus \ZZ/2\ZZ$ acting on $F$ by $x\mapsto -x, x\mapsto x+\epsilon$ with $\epsilon$ a 
    nontrivial 2-torsion point of $F$.
     In this case, $N_G\cap F \cong(\ZZ/2\ZZ)^2$ consists of the $2$-torsion points, and 
 $\varphi(N_G) = \varphi(G) = \langle x\mapsto -x\rangle$ if $ F \neq F_i$, while $\varphi(N_G) = A$
     if $F = F_i$ and $ \epsilon  = \frac{1}{2} ( 1 + i)$. 
     
     Thus $N_G/G\cong\ZZ/2\ZZ$ or it may be $\ZZ/2\ZZ \oplus \ZZ/2\ZZ$ for $F = F_i$.
    \item[(3)] $G\cong \ZZ/4\ZZ$ acting on $F=F_i = \CC/(\ZZ\oplus i\ZZ)$ by $x\mapsto i x$. In this case, $N_G\cap F = \langle x\mapsto x+\frac{1+i}{2}\rangle\cong\ZZ/2\ZZ$ and $\varphi(N_G) = \varphi(G)=\langle x\mapsto ix\rangle$. 
    
    Thus $N_G/G\cong\ZZ/2\ZZ$.
    \item[(4)] $G\cong\ZZ/4\ZZ \oplus \ZZ/2\ZZ$, acting by $x\mapsto ix, x\mapsto x+\left(\frac{1+i}{2}\right)$. In this case, $N_G\cap F\cong(\ZZ/2\ZZ)^2$ is the group of  $2$-torsion points of $F$, and $\varphi(N_G) = \varphi(G)=\langle x\mapsto ix\rangle$. Thus $N_G/G\cong\ZZ/2\ZZ$.
    \item[(5)] $G\cong\ZZ/3\ZZ$ acting on $F=F_\omega$ by $x\mapsto \omega x$. In this case, $N_G\cap F=\langle x\mapsto x+\frac{1-\omega}{3}\rangle\cong\ZZ/3\ZZ$, and $\varphi(N_G) = \langle x\mapsto -\omega x\rangle\cong \ZZ/6\ZZ$.
    
     It follows that $N_G/G\cong \mathfrak S_3$, the symmetric group on three elements.
    \item[(6)] $G\cong \ZZ/3\ZZ\oplus \ZZ/3\ZZ$ acting by $x\mapsto \omega x, x\mapsto x+\left(\frac{1-\omega}{3}\right)$. In this case, $N_G\cap F \cong(\ZZ/3\ZZ)^2$ is the group of  $3$-torsion points of $F$, and $\varphi(N_G)=\langle x\mapsto -\omega x\rangle\cong \ZZ/6\ZZ$. It follows that $N_G/G\cong \mathfrak S_3$.
    \item[(7)] $F=F_\omega$ and $G\cong \ZZ/6\ZZ$ acting by $x\mapsto -\omega x$. In this case, $N_G\cap F=\{ 0 \}$ and $\varphi(N_G)=\varphi(G)=G=\langle x\mapsto -\omega x\rangle$. It follows that $N_G/G$ is trivial.
\end{enumerate}
\end{proof}

\begin{thm}\label{thm: k=0a}
Let $X$ be a smooth projective surface with $\kappa(X)=0$. Then $[\Aut_\QQ(X):\Aut_0(X)] \leq 12$.
\end{thm}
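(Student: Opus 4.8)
The plan is to run through the Enriques--Kodaira classification of the minimal model. Since $\kappa(X)=0$, the surface $X$ has a unique minimal model $X_{\min}$, reached by a sequence of blow-downs of $(-1)$-curves, and $X_{\min}$ is a K3 surface, an Enriques surface, an Abelian surface, or a hyperelliptic surface. By Principle~\ref{prin: descend}, each blow-down is equivariant for cohomologically trivial automorphisms, so that $\Aut_\QQ(X)$ is identified, inside $\Bir(X)=\Bir(X_{\min})$, with the subgroup of $\Aut_\QQ(X_{\min})$ fixing all the (possibly infinitely near) centers of the blow-ups. Since $\Aut_0(X)\subset\Aut_\QQ(X)$ and $\Gamma_\QQ(X)$ is finite by Theorem~\ref{thm: Lieberman}, the quantity to bound is the order $|\Gamma_\QQ(X)|=[\Aut_\QQ(X):\Aut_0(X)]$.

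When $X=X_{\min}$ is minimal the bound is immediate: $|\Aut_\QQ|=1$ for K3, $|\Aut_\QQ|\le 4$ for Enriques, $\Aut_\QQ=\Aut_0$ for Abelian surfaces, and $[\Aut_\QQ:\Aut_0]\le 12$ for hyperelliptic surfaces by Theorem~\ref{thm: hyperelliptic}, the value $12$ being attained only in the hyperelliptic $\mathfrak A_4$ case. For the non-minimal cases I would dispatch K3, Enriques and Abelian quickly. For K3 and Enriques $\Aut_0(X_{\min})=\{\id_{X_{\min}}\}$, so $\Aut_0(X)=\{\id_X\}$ as well, while $\Aut_\QQ(X)\subset\Aut_\QQ(X_{\min})$ still has order at most $1$, resp.~$4$. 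For an Abelian surface $\Aut_\QQ(X_{\min})=\Aut_0(X_{\min})$ consists of translations, which are fixed-point free; since any nontrivial blow-up has a center $P$ and the only translation fixing $P$ is the identity, we get $\Aut_\QQ(X)=\{\id_X\}$.

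The crux is the non-minimal hyperelliptic case $X_{\min}=(F\times E)/\De_G$. Here $\Aut_0(X_{\min})\cong E$ acts through translations on the second factor; a single such translation has only finitely many fixed points on $X_{\min}$, so blowing up any point $P$ forces $\Aut_0(X)=\{\id_X\}$ and reduces the problem to bounding the finite group $\Aut_\QQ(X)$ itself. Writing $P=[(f_0,e_0)]$, I would intersect the stabilizer of $P$ with the exact sequence $1\to E\to\Aut_\QQ(X_{\min})\to N_G/G\to 1$ coming from the proof of Theorem~\ref{thm: hyperelliptic}. The subgroup $\Aut_\QQ(X)\cap E$ consists of translations $\tau_b$ with $\tau_b(P)=P$, i.e.\ $b=\epsilon_g$ for $g\in\Stab_G(f_0)$, while the image of $\Aut_\QQ(X)$ in $N_G/G$ lands in the stabilizer of $\bar f_0\in F/G$ for the induced action on $F/G\cong\PP^1$. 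These two factors reassemble, through the exact sequence $1\to\Stab_G(f_0)\to\Stab_{N_G}(f_0)\to\Stab_{N_G/G}(\bar f_0)\to 1$, into the point stabilizer $\Stab_{N_G}(f_0)\subset\Aut(F)$.

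The main obstacle — and the step that makes everything close up — is the final bound on this point stabilizer. Since a nontrivial translation of $F$ has no fixed point, $\Stab_{N_G}(f_0)$ contains no translation and therefore injects, via the linear part (equivalently the derivative at $f_0$), into the automorphism group of $(F,0)$, which is $\mu_2$, $\mu_4$ or $\mu_6$. Thus $\Stab_{N_G}(f_0)$ is cyclic of order at most $6$, and hence $|\Aut_\QQ(X)|\le|\Stab_{N_G}(f_0)|\le 6\le 12$, finishing the hyperelliptic case and the theorem. I expect the only delicate point to be the bookkeeping identifying $\Aut_\QQ(X)\cap E$ and the image in $N_G/G$ with the two terms of the stabilizer sequence, keeping careful track of the $\De_G$-identifications and of the infinitely near centers (which can only shrink the groups involved); the harmonic-map machinery of Principle~\ref{prin: negative} and its neighbours is not needed for this statement.
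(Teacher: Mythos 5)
Your proposal is correct, and in the decisive case it takes a genuinely different route from the paper. The paper's own proof handles non-minimal $X$ as follows: when $\dim\Aut_0(X_{\min})=0$ (K3, Enriques) it uses Principle~\ref{prin: descend} to get $|\Aut_\QQ(X)|\le|\Aut_\QQ(X_{\min})|\le 12$, exactly as you do; when $\dim\Aut_0(X_{\min})>0$ (Abelian, hyperelliptic) it claims that an element of $\Aut_0(X_{\min})$ fixing a point must be the identity, deduces $\Aut_\QQ(X)\cap\Aut_0(X_{\min})=\{\id\}$, and concludes that $\Aut_\QQ(X)$ injects into $\Aut_\QQ(X_{\min})/\Aut_0(X_{\min})$, which has order $\le 12$. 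Your Abelian case coincides with this, but for the hyperelliptic case you instead intersect the stabilizer of the center $P=[(f_0,e_0)]$ with the extension $1\to E\to\Aut_\QQ(X_{\min})\to N_G/G\to 1$, bound the kernel part by $\Stab_G(f_0)$ and the image by $\Stab_{N_G/G}(\bar f_0)$, and reassemble these into $\Stab_{N_G}(f_0)\subset\Aut(F)$, of order at most $6$. This extra care buys more than elegance: the paper's fixed-point-freeness claim is in fact \emph{false} for hyperelliptic surfaces. If $g\in G\setminus\{1\}$ has a fixed point $x_0\in F$ (such $g$ always exists, since $F/G\cong\PP^1$ forces ramification), then the nontrivial translation $\tau_{\epsilon_g}\in\Aut_0(X_{\min})\cong E$ fixes pointwise the image of $\{x_0\}\times E$ in $X_{\min}$; blowing up a point of that curve makes $\Aut_\QQ(X)\cap\Aut_0(X_{\min})$ nontrivial, so the paper's injectivity argument breaks down precisely there, while your stabilizer count (giving the sharper bound $6$ in that case) closes it. Two minor blemishes in your write-up, neither fatal: your assertion that a nontrivial translation has only finitely many fixed points on $X_{\min}$ is wrong for the same reason (the fixed locus can be a union of elliptic curves), but what you actually use --- correctly --- is that the stabilizer of $P$ inside $E$ is the finite group $\{\tau_{\epsilon_g}\mid g\in\Stab_G(f_0)\}$; and the triviality of $\Aut_0(X)$ is not even needed, since $[\Aut_\QQ(X):\Aut_0(X)]\le|\Aut_\QQ(X)|$ once the latter is shown to be finite.
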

\begin{proof}
If $X$ is minimal, the assertion follows from the listed facts above and Theorem~\ref{thm: hyperelliptic}. 

If $X$ is not minimal, then $\Aut_\QQ(X)\subset \Aut_\QQ(X_{\min})$ by Principle~\ref{prin: descend}. So, if $\dim \Aut_0(X_{\min})=0$, then
\[
|\Aut_\QQ(X)|\leq |\Aut_\QQ(X_{\min})| \leq 12.
\]
In case $\dim \Aut_0(X_{\min})>0$, $X_{\min}$ is either an abelian surface or a hyperelliptic surface and $\Aut_0(X_{\min})$ is an abelian variety of dimension $2$ or $1$. The subgroup $\Aut_\QQ(X)\subset \Aut_\QQ(X_{\min})$ fixes the points $p\in X_{\min}$, over which $X\rightarrow X_{\min}$ is not an isomorphism. But an element of $\Aut_0(X_{\min})$ fixes a point if and only if it is the identity. It follows that 
\[
\Aut_\QQ(X) \cap \Aut_0(X_{\min})=\id_{X_{\min}},
\]
and hence the natural homomorphism $\Aut_\QQ(X)\rightarrow  \Aut_\QQ(X_{\min})/\Aut_0(X_{\min})$ is injective. Therefore,
\[
|\Aut_\QQ(X)|\leq |\Aut_\QQ(X_{\min})/\Aut_0(X_{\min})|\leq 12.
\]
\end{proof}
For $\Aut_\sharp(X)$ we have a sharper result:

\begin{thm}\label{thm: k=0b}
Let $X$ be a smooth projective surface with $\kappa(X)=0$. Then $\Aut_\sharp(X) = \Aut_0(X)$.
\end{thm}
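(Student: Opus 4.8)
The plan is to run through the Enriques--Kodaira classification of the minimal model $X_{\min}$, using throughout that $\Aut_0(X)\subseteq\Aut_\sharp(X)\subseteq\Aut_\ZZ(X)$, so that it suffices to bound $\Aut_\sharp(X)$ from above. In the three cases where $\Aut_\ZZ(X)=\Aut_0(X)$ is already known the statement is then immediate: for $X_{\min}$ a complex torus (and its blow-ups) one has $\Aut_\QQ(X)=\Aut_0(X)$; for $X_{\min}$ a K3 surface (and its blow-ups) Burns--Rapoport \cite{br} together with Principle~\ref{prin: descend} give $\Aut_\QQ(X)=\{\id_X\}=\Aut_0(X)$; and for $X$ a minimal hyperelliptic surface Theorem~\ref{thm: hyperelliptic} gives $\Aut_\ZZ(X)=\Aut_0(X)\cong E$. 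In each of these $\Aut_\sharp(X)$ is squeezed between $\Aut_0(X)$ and a group equal to $\Aut_0(X)$, whence $\Aut_\sharp(X)=\Aut_0(X)$.

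Two cases then remain: nontrivial blow-ups of hyperelliptic surfaces, and Enriques surfaces together with their blow-ups. In both, $\Aut_0(X)=\{\id_X\}$ (there are no nonzero holomorphic vector fields: on a hyperelliptic surface they are the nowhere-vanishing descended translation fields, which cannot vanish at a blown-up centre, while an Enriques surface carries none), so what must be shown is that $\Aut_\sharp(X)$ is trivial, i.e.\ that a numerically trivial automorphism homotopic to the identity is in fact the identity.

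For a nontrivial blow-up $X\to X_{\min}$ of a hyperelliptic surface I would invoke Principle~\ref{prin: rigidity} directly. A hyperelliptic surface $X_{\min}=(F\times E)/G$ is the quotient of the abelian surface $F\times E$ by a finite group acting freely by isometries of a $G$-invariant flat metric, so $X_{\min}$ is a compact K\"ahler manifold of nonpositive (indeed vanishing) sectional curvature. The blow-down $\rho\colon X\to X_{\min}$ is a generically finite proper holomorphic map, and $\chi_\topo(X)$ equals the number of blown-up points, which is $>0$; hence Principle~\ref{prin: rigidity} yields $\Aut_\sharp(X)=\{\id_X\}=\Aut_0(X)$.

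The Enriques case is the genuine obstacle, since there $\Aut_\ZZ(X)$ may be $\ZZ/2\ZZ\neq\Aut_0(X)$, so one must actually produce a homotopical obstruction separating the numerically trivial involution from the identity. Here I would pass to the universal cover: if $X$ is an Enriques surface or a blow-up of one, then $\pi_1(X)=\ZZ/2\ZZ$ and the universal cover $\tilde X\to X$ is an \'etale double cover which is a K3 surface, respectively the blow-up of a K3 surface at the doubled centres. Given $\sigma\in\Aut_\sharp(X)$ with homotopy $H$ from $\id_X$ to $\sigma$, since $\sigma_\ast=\id$ on $\pi_1(X)$ the map $\sigma$ lifts to a biholomorphism $\tilde\sigma$ of $\tilde X$, and lifting $H$ starting from $\id_{\tilde X}$ shows $\tilde\sigma\in\Aut_\sharp(\tilde X)\subseteq\Aut_\ZZ(\tilde X)$. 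But $\tilde X$ is a (possibly trivial) blow-up of a K3 surface, so $\Aut_\ZZ(\tilde X)\subseteq\Aut_\QQ(\tilde X)=\{\id\}$ by \cite{br} and Principle~\ref{prin: descend}; hence $\tilde\sigma=\id_{\tilde X}$ and therefore $\sigma=\id_X$. The points needing care are the lifting of the homotopy (which only uses $\sigma_\ast=\id$ on $\pi_1$) and the identification of $\tilde X$ as a blow-up of K3 in the non-minimal case; both are routine once set up. The same strategy, with the abelian cover $F\times E\to X_{\min}$ replacing the K3 cover, gives an alternative treatment of the hyperelliptic blow-ups, reducing them to the known triviality of $\Aut_\QQ$ for blow-ups of tori.
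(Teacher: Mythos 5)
Your proposal is correct and follows essentially the same route as the paper's proof: the K3 and torus cases via Burns--Rapoport and Principle~\ref{prin: descend}, the minimal abelian/hyperelliptic case via Principle~\ref{SIPU} (Theorem~\ref{thm: hyperelliptic}), blow-ups of hyperelliptic surfaces via the flat metric together with Principle~\ref{prin: rigidity}, and the Enriques case (including its blow-ups) by lifting the homotopy to the K3 universal cover. The only immaterial deviation is that you dispatch blow-ups of tori with Principle~\ref{prin: descend} (translations fixing a point are trivial), whereas the paper folds that case into the Principle~\ref{prin: rigidity} argument alongside the hyperelliptic blow-ups.
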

\begin{proof}
Let $X_{\min}$ be the minimal model of $X$. Then by 
 Principle~\ref{prin: descend},
$\Aut_\ZZ(X)\subset \Aut_\ZZ(X_{\min})$.

If $X_{\min}$ is a K3 surface, then $\Aut_\ZZ(X_{\min})$ is trivial. It follows that $\Aut_\ZZ(X)$ and hence $\Aut_\sharp(X)$ is trivial. 

If $X_{\min}$ is an Enriques surface, then its universal cover $\tilde X_{\min}$ is a K3 surface and is the minimal model of the universal cover 
 $\pi\colon \tilde X\rightarrow X$.
 Suppose that $\sigma\in\Aut_\sharp(X)$ and that $\Sigma\colon X\times I\rightarrow X$ is a homotopy from  $\id_X$ to $\sigma$. Then, by the homotopy lifting property, there is a homotopy $\tilde\Sigma\colon \tilde X\times 
  [0,1]
 \rightarrow \tilde X$ from $\id_{\tilde X}$ to  $\tilde \sigma$, where $\tilde \sigma\in\Aut(\tilde X)$ is a lifting of $\sigma$:
 
\[
\begin{tikzcd}
\tilde X\times [0,1]\arrow[d, "\pi\times\id"] \arrow[rr, "\exists\,\, \tilde \Sigma"]&& \tilde X  \arrow[d,"\pi"]\\
X\times [0,1]\arrow[rr, "\Sigma"] && X
\end{tikzcd}
\]
 Since $\Aut_\sharp(\tilde X)$ is trivial, $\tilde \sigma =\id_{\tilde X}$. It follows that $\sigma=\id_X$. 

If $X=X_{\min}$ is an Abelian surface or a hyperelliptic surface, then  by 
 Principle~\ref{SIPU}
 and the discussion above we know that $\Aut_\ZZ(X) = \Aut_0(X)$. It follows a  fortiori  that $\Aut_\sharp(X) = \Aut_0(X)$. 

Finally,  suppose that $X_{\min}$ is an Abelian surface or a hyperelliptic surface but $\rho\colon X\rightarrow X_{\min}$ is not an isomorphism. Then the topological Euler characteristics satisfy $\chi_\topo(X)>\chi_\topo(X_{\min})=0$. 

Now observe that there is a flat metric on $X_{\min}$, and by Principle~\ref{prin: rigidity}, we have $\Aut_\sharp(X) =\{ \id_X\}$.
\end{proof}

\subsection{Case $\kappa(X)=2$}
For a surface $X$ of general type, we have that $|\Aut_\QQ(X)|$ is bounded and, in fact, $|\Aut_\QQ(X)|\leq 4$ if $\chi(\mo_X)\geq189$ (\cite{Cai04}). Subsequently, examples have been  found with $|\Aut_\QQ(X)|= 4$ and $\chi(\mo_X)$ arbitrarily large (\cite{CL18}).

\end{document}